\def\graybox(#1,#2){
\x=#1 \y=#2 
\z=\x \t=\y
\advance\z by 10 
\advance\t by 10 
\psframe[fillstyle=solid,fillcolor=lightgray,linewidth=0pt](\x,\y)(\z,\t) 
\psline[linewidth=.5pt](\x,\y)(\x,\t)(\z,\t)(\z,\y)(\x,\y)}
\def\emptygraybox(#1,#2){
\x=#1 \y=#2 
\z=\x \t=\y
\advance\z by 10 
\advance\t by 10 
\psframe[fillstyle=solid,fillcolor=lightgray,linewidth=0pt,linecolor=lightgray](\x,\y)(\z,\t)}
\def\blankbox(#1,#2){
\x=#1 \y=#2 
\z=\x \t=\y
\advance\z by 10 
\advance\t by 10 
\psframe[linewidth=.5pt](\x,\y)(\z,\t)}
\def\whitebox(#1,#2){
\x=#1 \y=#2 
\z=\x \t=\y
\advance\z by 10 
\advance\t by 10 
\psframe[fillstyle=solid,fillcolor=white,linewidth=0pt](\x,\y)(\z,\t) 
\psline[linewidth=.5pt](\x,\y)(\x,\t)(\z,\t)(\z,\y)(\x,\y)}
\def\whiteboxb(#1,#2){
\x=#1 \y=#2 
\z=\x \t=\y
\advance\z by 10 
\advance\t by 10 
\psframe[fillstyle=solid,fillcolor=white,linewidth=0pt](\x,\y)(\z,\t)}
\newcommand{\define}{\textbf}
\newcommand{\excise}[1]{}
\renewcommand{\P}{\mathbb{P}}
\newcommand{\Z}{\mathbb{Z}}
\newcommand{\isom}{\cong}
\renewcommand{\phi}{\varphi}
\renewcommand{\tilde}{\widetilde}
\renewcommand{\hat}{\widehat}
\renewcommand{\bar}{\overline}
\newcommand{\on}{\operatorname}
\newcommand{\bk}{\mathbf{k}}
\newcommand{\bp}{\mathbf{p}}
\newcommand{\bq}{\mathbf{q}}
\newcommand{\br}{\mathbf{r}}
\DeclareMathOperator{\rk}{rk}
\newcommand{\QQ}{\mathcal{Q}}    
\renewcommand{\O}{\mathcal{O}}   
\newcommand{\triple}{{\bm{\tau}}}
\newcommand{\aaa}{a}
\newcommand{\Eta}{\mathrm{H}}
\newcommand{\Pf}{ \on{Pf} }
\renewcommand{\min}{{\text{min}}}
\newtheorem{thm}{Theorem}
\newtheorem{lem}[thm]{Lemma}
\newtheorem*{thm*}{Theorem}
\newtheorem*{lem*}{Lemma}
\newtheorem*{prop*}{Proposition}
\newtheorem*{cor*}{Corollary}
\theoremstyle{definition}
\newtheorem{rmk}{Remark}
\newtheorem*{defn*}{Definition}
\newtheorem*{rmk*}{Remark}
\newtheorem*{ex*}{Example}
\begin{document}

\title{Chern class formulas for classical-type degeneracy loci}
\date{September 18, 2019}

\author{David Anderson}
\email{anderson.2804@osu.edu}
\address{Department of Mathematics,
The Ohio State University,
Columbus, Ohio, 43210}

\author{William Fulton}
\email{wfulton@umich.edu}
\address{Department of Mathematics,
University of Michigan,
Ann Arbor, Michigan  48109}

\dedicatory{To Piotr Pragacz on the occasion of his sixtieth birthday}
\thanks{DA was partially supported by NSF grant DMS-1502201.}

\begin{abstract}
Employing a simple and direct geometric approach, we prove formulas for a large class of degeneracy loci in types B, C, and D, including those coming from all isotropic Grassmannians.  The results unify and generalize previous Pfaffian and determinantal formulas.  Specializing to the Grassmannian case, we recover the remarkable theta- and eta-polynomials of Buch, Kresch, Tamvakis, and Wilson.  Our method yields streamlined proofs which proceed in parallel for all four classical types, substantially simplifying previous work on the subject.

In an appendix, we develop some foundational algebra and prove several Pfaffian identities.  Another appendix establishes a basic formula for classes in quadric bundles.
\end{abstract}

\maketitle

\setcounter{tocdepth}{1}
\tableofcontents

\section*{Introduction}

A fundamental problem asks for a formula for the cohomology (or Chow) class of a degeneracy locus, as a polynomial in the Chern classes of the vector bundles involved.  In its simplest form, the answer is given by the Giambelli-Thom-Porteous formula: the locus is where two subbundles of a given vector bundle meet in at least a given dimension, and the formula is a determinant.

The aim of this article is to prove formulas for certain degeneracy loci in classical types.  One has maps of vector bundles, or flags of subbundles of a given bundle; degeneracy loci come from imposing conditions on the ranks of maps, or dimensions of intersections.  The particular loci we consider are indexed by {\em triples} of $s$-tuples of integers, $\triple=(\br,\bp,\bq)$ (in type A) or $\triple=(\bk,\bp,\bq)$ (in types B, C, and D).  In type A, each $(r_i,p_i,q_i)$ specifies a rank condition on maps of vector bundles $\rk(E_{p_i} \to F_{q_i}) \leq r_i$; in other types, $E_\bullet$ and $F_\bullet$ are flags of isotropic or coisotropic bundles inside some vector bundle with bilinear form, and each $(k_i,p_i,q_i)$ specifies $\dim(E_{p_i}\cap F_{q_i}) \geq k_i$.

In each case, we will write $\Omega_\triple \subseteq X$ for the degeneracy locus.  Its expected codimension depends on the type.  In fact, to each triple we associate a partition $\lambda(\triple)$ (again  depending on type), whose size is equal to the codimension of $\Omega_\triple$.

The resulting degeneracy loci of type A have a determinantal formula which generalizes that of Giambelli-Thom-Porteous.  The loci corresponding to triples are exactly those defined by {\em vexillary permutations} according to the recipe of \cite{F2}; building on work of Kempf-Laksov, Lascoux-Sch\"utzenberger, and others, it was shown in [{\it op.~cit.}] that
\[
  [\Omega_\triple] = \Delta_{\lambda(\triple)}(c(1),\ldots,c(\ell)) := \det( c(i)_{\lambda_i+j-i} )_{1\leq i,j\leq \ell}.
\]
Here each $c(k)$ is a (total) Chern class $c(F_{q_i}-E_{p_i}) = c(F_{q_i})/c(E_{p_i})$, and $\Delta_\lambda$ is a {\em Schur determinant}; more details will be given in \S\ref{s.typeA}.

In other classical types, work of Pragacz and his collaborators showed that Pfaffians should play a role analogous to determinants in type A, at least for cases where there is a single bundle $E$ and all $F_\bullet$ are isotropic \cite{P1,P2,PR,LP}.  More recent work of Buch, Kresch, and Tamvakis exploits a crucial insight: both determinants and Pfaffians can be defined via {\em raising operators}, and by adopting the raising operator point of view, one can define {\em theta-} and {\em eta-polynomials}, which interpolate between determinants and Pfaffians.  These provide representatives for Schubert classes in non-maximal isotropic Grassmannians; here one has a single isotropic $E$, and a flag of trivial bundles $F_\bullet$, some of which may be coisotropic \cite{BKT1,BKT2,T}.  Wilson extended this idea to define double theta-polynomials, and conjectured that they represent equivariant Schubert classes in non-maximal isotropic Grassmannians \cite{W}.  This was proved in \cite{IM}, and, via a different method, in \cite{TW}.

We will introduce triples $\triple$ for each classical type, and study degeneracy loci defined by $\dim( E_{p_i} \cap F_{q_i} )\geq k_i$, with all $E_\bullet$ isotropic, and $F_\bullet$ either isotropic or coisotropic.  When the $F_\bullet$ are all isotropic, the formulas are (multi-)Pfaffians (as in the preprints \cite{K} and \cite{AF1}); allowing coisotropic conditions presents some subtleties, but leads directly to the definitions of {\it multi-theta polynomials} $\Theta_\lambda$ and {\it multi-eta-polynomials} $\Eta_\lambda$.  Our main theorem is stated in terms of these polynomials.

\begin{thm*}
Let $\triple$ be a triple, and let $\Omega_\triple$ be the corresponding degeneracy locus (of type C, B, or D).

\begin{enumerate}
\item[(C)] In type C, we have
\[
  [\Omega_\triple] = \Theta_{\lambda(\triple)}(c(1),\ldots,c(\ell)).
\]

\item[(B)] In type B, we have
\[
  [\Omega_\triple] = 2^{-k_a}\Theta_{\lambda(\triple)}(c(1),\ldots,c(\ell)).
\]

\item[(D)] In type D, we have
\[
  [\Omega_\triple] = 2^{-k_a}\Eta_{\lambda(\triple)}(c(1),\ldots,c(\ell)).
\]
\end{enumerate}
\end{thm*}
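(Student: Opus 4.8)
The plan is to prove all three formulas by a single geometric argument, following the method of Kazarian as refined in \cite{AF1}, but now allowing some of the bundles $F_\bullet$ to be coisotropic; the computation runs in parallel for types B, C, and D, with the only type-dependent inputs being the Gysin formulas for the three kinds of isotropic Grassmannian bundles. First I would reduce to a universal model. Since the formation of $\Omega_\triple$ and of the Chern classes $c(i) = c(F_{q_i} - E_{p_i})$ commutes with pullback, it suffices to prove each identity when $X$ is a suitable bundle of (partial, isotropic or coisotropic) flags over a smooth base, chosen so that $\Omega_\triple$ is irreducible, reduced, and of the expected codimension $|\lambda(\triple)|$. One may moreover put $\triple$ in a normal form, indexed by its essential set $\Ess(\triple)$. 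In this model the class $[\Omega_\triple]$ is determined and may be computed on any convenient resolution.

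Next, following Kazarian, I would resolve $\Omega_\triple$ by a tower of Grassmannian bundles, adjoining data one condition at a time: each $(k_i, p_i, q_i)$ contributes a stage recording a $k_i$-plane inside $E_{p_i} \cap F_{q_i}$ (compatibly with the earlier choices), while realizing the ambient bilinear form --- passing from the given partial flags to full isotropic or coisotropic flags --- contributes isotropic Grassmannian-bundle stages (Lagrangian in type C, orthogonal in types B and D). The outcome is a smooth space $\tilde\Omega$ with a proper map $\pi \colon \tilde\Omega \to X$ whose image is $\Omega_\triple$ and which is generically one-to-one onto $\Omega_\triple$, so that $[\Omega_\triple] = \pi_*[\tilde\Omega]$; and inside the total space of the tower, $[\tilde\Omega]$ is an explicit product of Chern (Euler) classes, one factor contributed by each stage.

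Now push $[\tilde\Omega]$ down through the tower one stage at a time. Each step is a Gysin map along a Grassmannian bundle, evaluated by the standard formula: an ordinary Grassmannian bundle produces a determinant-type raising-operator factor $\prod(1 - R_{ij})$; a Lagrangian Grassmannian bundle produces the factor $\prod \tfrac{1-R_{ij}}{1+R_{ij}}$; and an orthogonal Grassmannian bundle produces the analogous factor together with, in types B and D, extra factors of $2$ whose accumulated product is the normalization $2^r$. As the Chern classes descend they reassemble into the $c(i)$, and the accumulated raising operators, applied to the resulting Schur monomial, should match term-by-term the raising-operator definitions of $\Theta_{\lambda(\triple)}$ (types B and C) and $\Eta_{\lambda(\triple)}$ (type D); in effect the computation is the type-A determinantal formula twisted by the presence of the form.

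The main obstacle is the last matching step: verifying that the output of the iterated Gysin computation is exactly the multi-theta or multi-eta polynomial, with the correct shape $\lambda(\triple)$, the correct set of index pairs $(i,j)$ carrying each kind of raising-operator factor, and the correct power of $2$. This is both the crux and the reason the four-type treatment can be parallel: the tower must be chosen so that building it up and then contracting it mirrors precisely the definition of $\Theta$ and $\Eta$, and the combinatorial bookkeeping (how $\lambda(\triple)$ and the ``isotropic versus coisotropic'' pattern of conditions evolve) must be set up once and for all. A secondary technical point is establishing the transversality needed for $\tilde\Omega$ to be smooth and $\pi$ to be birational onto $\Omega_\triple$ in the universal model; this goes as in the all-isotropic case of \cite{AF1}, with minor modifications to accommodate coisotropic $F_\bullet$.
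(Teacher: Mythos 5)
Your high-level skeleton — reduce to a universal model, resolve $\Omega_\triple$ by a tower, push down with Gysin formulas, match the output against $\Theta_\lambda$ / $\Eta_\lambda$ — is the right shape, but the specific mechanism you describe does not match the paper's and, as sketched, has a genuine gap in the ``matching step'' you yourself flag as the crux.

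The paper's resolution tower consists entirely of \emph{ordinary} projective bundles $X_j = \P(E_{p_j}/D_{j-1})$, even in types B, C, D; the Gysin step uses only the elementary pushforward Identity (d), which carries $\tilde c(j)_{\tilde\lambda_j} \mapsto c(j)_{\lambda_j}$ and \emph{commutes} with the raising operators, contributing no raising-operator factors at all. The Pfaffian/theta raising-operator structure — including the crucial $\prod_{i\le\rho_j<j}(1+R_{ij})^{-1}$ factors, which depend on the index pattern $\rho$ and not just on the type — is created entirely in the \emph{dominant case}: the locus is cut out one condition at a time, $[\Omega_\triple]$ is an explicit product of top Chern classes, and the conversion to a raising-operator expression uses the duality $D^\perp \cong (V/D)^*$, the identity $D_{j-1}^\perp/(F_{q_j}\cap D_{j-1}^\perp) = D_{\rho_j}^\perp/F_{q_j}$, and Identities (a)--(c). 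Your plan instead locates this structure in Gysin formulas for Lagrangian/orthogonal Grassmannian bundles. A Lagrangian Grassmannian pushforward produces a \emph{uniform} factor $\prod\frac{1-R_{ij}}{1+R_{ij}}$ over all pairs; it is not clear how a composite of such Gysin maps would produce the $\rho$-dependent mixture $\prod(1-R_{ij})\cdot\prod_{i\le\rho_j<j}(1+R_{ij})^{-1}$, which is precisely the content of the theta-polynomial. This is the missing idea: the raising operators arise from an algebraic rewriting of a product, not from the pushforward.

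The factors of $2$ are also not accounted for correctly. They do not come from orthogonal Grassmannian Gysin maps; they come from the \emph{basic case} in types B and D, where for a single condition $\{E\subseteq F_{q_1}\}$ with $E$ a line bundle and $F_{q_1}$ isotropic, one has $2\,[\Omega] = c_{p+n-1}(V-E-F_{q_1}-M)$ (the quadric-bundle Proposition of Appendix B), whereas for coisotropic $F_{q_1}$ there is no factor of $2$. Accumulating these over the $a-1$ isotropic conditions in the dominant case gives $2^{a-1} = 2^r$. Finally, you do not address the reduction from arbitrary $\triple$ to $k_i=i$; the paper needs the inflation Lemma A.1 to show that inserting redundant conditions leaves $\Theta^{(\rho)}_\lambda$ and $\Eta^{(\rho)}_\lambda$ unchanged.
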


The entries $c(i)$ vary by type, and along with the definitions of $\Theta_\lambda$ and $\Eta_\lambda$, these are specified in Theorems~\ref{t.typeC}, \ref{t.typeB}, and \ref{t.typeD}.  For now, let us mention some special cases.  When the triple has all $q_i\geq0$, the loci are defined by conditions on isotropic bundles, and each formula is a Pfaffian.  
If the triple has all $p_i=p$, the loci come from Grassmannians; in the type C case, we recover the formulas of \cite{IM} and \cite{TW}, and in case the $F$'s are trivial, we recover formulas of \cite{BKT1,BKT2}.  (The type D formula includes a definition of {\em double eta-polynomial}, which is new even in the Grassmannian case.\footnote{Tamvakis recently announced that he has also found such polynomials \cite{T3}.})

The structure of the argument in each type is essentially the same.  First, one has a basic formula for the case where the only condition is $E_{p_1}\subseteq F_{q_1}$, and furthermore $E_{p_1}$ is a line bundle.  Next, there is the case where $E_{p_i}$ has rank $i$, and the conditions are $E_{p_i}\subseteq F_{q_i}$; the formula here is easily seen to be a product, and one uses some elementary algebra to convert the product into a raising operator formula.  (In type A, these loci correspond to {\em dominant permutations}.)  The ``main case'' is where the conditions are $\dim(E_{p_i}\cap F_{q_i})\geq i$; such loci are resolved by a birational map from a dominant locus, and the pushforward can be decomposed into a series of projective bundles.  Finally, a little more elementary algebra reduces the general case to the main case.

Proving the theorem requires only a few general facts.  Some of these are treated in appendices, but we collect four basic formulas here for reference.  Let $E$ be a vector bundle of rank $e$ on a variety $X$.

\begin{enumerate}[(a)]

\item If $L$ is a line bundle on $X$, then
\[
  c_e(E-L) = c_e(E\otimes L^*).
\] \label{ident1}

\item If $L$ is a line bundle on $X$, for any $b\geq 0$ and $j\geq e$ we have
\[
  (-c_1(L))^b\, c_j(E-L) = c_{b+j}(E-L).
\] \label{ident2}

\item If $F'$ is a subbundle of a vector bundle $F$, then
\[
  c(E-F/F') = c(E-F)\,c(F').
\] \label{ident3}

\item  Let $\pi\colon \P(E) \to X$ be the projective bundle, and $Q=\pi^*E/\O(-1)$ the universal quotient bundle.  For any $\sigma\in A^*X$, we have
\[
  \pi_*(\pi^*\sigma \cdot c_j(Q)) = \begin{cases} \sigma & \text{if } j=e-1; \\ 0 &\text{otherwise.} \end{cases}
\] \label{ident4}
\end{enumerate}
(Identities \eqref{ident1}--\eqref{ident3} are easy to deduce from the Whitney sum formula, and \eqref{ident4} follows from the formula for $A^*\P(E)$ as an algebra over $A^*X$.)

We conclude this introduction with some remarks on the development and context of our results.  The {\em double Schubert polynomials} of Lascoux and Sch\"utzenberger, which represent type A degeneracy loci, have many wonderful combinatorial properties.  A problem that received a great deal of attention in the 1990's was to find similar polynomials representing loci of other classical types.  First steps in this direction were taken by Billey and Haiman, who defined (single) Schubert polynomials for types B, C, and D \cite{BH}; double versions were obtained by Ikeda, Mihalcea, and Naruse \cite{IMN} and studied further by Tamvakis \cite{T}.  A simplified development of these double Schubert polynomials follows from the degeneracy locus formulas proved here.  We should point out that in types B, C, and D, any theory of Schubert polynomials involves working not in a polynomial ring, but in a ring with relations; modulo these relations, however, stable formulas are essentially unique.  See \cite{AF1}, or the survey \cite{T2}, for more perspective on this history.

The Schubert varieties and degeneracy loci in types B, C, and D are indexed by signed permutations, and it is natural to ask whether certain signed permutations correspond to Pfaffians, by analogy with the determinantal formulas for vexillary permutations in type A.  In the preprint \cite{AF1}, we identified such a class of {\em vexillary signed permutations},\footnote{Although not obvious from the definitions, the vexillary signed permutations of \cite{AF1} correspond to certain {\em vexillary elements} of the hyperoctahedral group, defined and studied by Billey and Lam \cite{BL}.  It should be interesting to study the permutations arising from the more general triples to be considered here; see the remarks at the end of \S\ref{s.typeC}.} 
defined via triples $\triple$ such that all $q_i\geq 0$.  Following ideas of Kazarian \cite{K}, we proved Pfaffian formulas for vexillary loci, and also studied the relationship between these Pfaffians and the double Schubert polynomials of \cite{IMN}.  We plan to revisit the combinatorics and algebra of Schubert polynomials in separate work.

In the present work, we focus on the Pfaffian formulas and their generalizations.  The setup is heavily influenced by Kazarian's preprint \cite{K}.  We wish to emphasize that our key contribution is the argument itself: by including the more general vexillary loci, we separate algebra (showing a product equals a determinant or Pfaffian, in the ``dominant'' case of the proof) from geometry (constructing a resolution of singularities and pushing forward the formula, in the ``main'' case).

In revisiting our earlier approach, we found the situation was clarified by making explicit the role of raising operators.  For this, we owe a great debt to the work of Buch, Kresch, and Tamvakis, whose remarkable theta- and eta-polynomial formulas have convincingly demonstrated the utility of raising operators in geometry.  This inspired us to apply our geometric method to more general loci, simultaneously generalizing their formulas and yielding shorter and more uniform proofs.

Finally, although the recent prominence of raising operators is due to Buch, Kresch, and Tamvakis, it was Pragacz who first brought them to geometry.  We take both combinatorial and geometric inspiration from his work, and dedicate this article to him on the occasion of his sixtieth birthday.

\medskip
\noindent
{\it Acknowledgements.} We must acknowledge the debt we owe to work of Kazarian \cite{K}, whose method guided our approach to these degeneracy locus problems.  We thank Harry Tamvakis for drawing attention to inaccuracies in the published version of the manuscript, and Jordan Lambert Silva for suggesting a clarification in the definition of a type C triple.  Finally, we thank the referees for detailed comments and suggestions for improving the paper.

\section{Type A revisited} \label{s.typeA}

The determinantal formula describing degeneracy loci in Grassmann bundles --- or more generally, vexillary loci in flag bundles --- is, by now, quite well known; see \cite{KL,F2} for recent versions.  However, our reformulation of its setup and proof will provide a model for the (new) formulas in other types, so we will go through it in detail.

A \define{triple of type A} is the data $\triple = (\br,\bp,\bq)$, where each of $\br$, $\bp$, and $\bq$ is an $s$-tuple of non-negative integers, with
\begin{align*}
 0<&p_1\leq p_2 \leq \cdots \leq p_s \qquad \text{and} \\
 &q_1 \geq q_2 \geq \cdots \geq q_s >0.
\end{align*}
Setting $k_i = p_i - r_i$ and $l_i = q_i - r_i$, we further require that
\begin{align*}
 0<& k_1<k_2< \cdots < k_s \qquad \text{and} \\
   & l_1 \geq l_2 \geq \cdots \geq l_s \geq 0.
\end{align*}
The last condition is equivalent to requiring
\renewcommand{\theequation}{a}
\begin{equation}\label{e.tripleA1}
  k_i - k_{i-1} \leq (q_i-q_{i-1}) - (p_i-p_{i-1})
\end{equation}
\renewcommand{\theequation}{\arabic{equation}}
\setcounter{equation}{0}
for all $i$.

Associated to a triple there is a partition $\lambda=\lambda(\triple)$, defined by setting $\lambda_{k_i} = l_i$, and filling in the remaining parts minimally so that $\lambda_1 \geq \lambda_2 \geq \cdots \geq \lambda_{k_s}\geq 0$.  (An essential triple specifies only the ``corners'' of the Young diagram for $\lambda$, so in a sense it is a minimal way of packaging this information.)

Given a partition $\lambda = (\lambda_1\geq \cdots \geq \lambda_\ell\geq 0)$ and symbols $c(1),\ldots,c(\ell)$, the associated \define{Schur determinant} is
\[
  \Delta_\lambda(c(1),\ldots,c(\ell)) := \det( c(i)_{\lambda_i+j-i} )_{1\leq i,j\leq \ell}.
\]
For a positive integer $\ell$, let $R^{(\ell)}$ be the raising operator
\begin{equation}
  R^{(\ell)} = \prod_{1\leq i<j\leq \ell} (1-R_{ij}),
\end{equation}
where $R_{ij}=T_i/T_j$ is the operator defined in Appendix A.2.  
A simple application of the Vandermonde identity shows that
\begin{equation}
  \Delta_\lambda(c(1),\ldots,c(\ell)) = R^{(\ell)}\cdot (c(1)_{\lambda_1}\cdots c(r)_{\lambda_\ell}),
\end{equation}
and we will use this observation crucially in proving the degeneracy locus formula.

Here is the geometric setup.  On a variety $X$ we have a sequence of vector bundles
\[
  E_{p_1} \hookrightarrow E_{p_2} \hookrightarrow \cdots \hookrightarrow E_{p_s} \xrightarrow{\phi} F_{q_1} \twoheadrightarrow F_{q_2} \twoheadrightarrow \cdots \twoheadrightarrow F_{q_s},
\]
where subscripts indicate ranks; for each $i$, there is an induced map $E_{p_i} \to F_{q_i}$.  The degeneracy locus corresponding to the triple $\triple$ is
\[
  \Omega_\triple := \{ x\in X\, |\, \rk(E_{p_i} \to F_{q_i}) \leq r_i \text{ for } 1\leq i\leq s \}.
\]
This comes equipped with a natural subscheme structure defined locally by the vanishing of certain determinants.

Let $c(k_i) = c(F_{q_i} - E_{p_i})$, and set $c(k) = c(k_i)$ whenever $k_{i-1}<k\leq k_i$.  We also set $\ell=k_s$, and by convention, we always take $k_0=0$.)  With this notation, the degeneracy locus formula can be stated as follows.

\begin{thm}[{cf.~\cite{KL,F2}}]\label{t.typeA}
We have $[\Omega_\triple] = \Delta_{\lambda(\triple)}(c(1),\ldots,c(\ell))$.
\end{thm}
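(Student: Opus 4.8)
The plan is to follow exactly the four-stage strategy outlined in the introduction, specialized to type A, where determinants play the role of theta- and eta-polynomials. The key algebraic input is the raising-operator presentation $\Delta_\lambda(c(1),\ldots,c(\ell)) = R^{(\ell)}\cdot(c(1)_{\lambda_1}\cdots c(\ell)_{\lambda_\ell})$, which lets one manipulate the formula multiplicatively. I would first reduce to the case where $\triple$ is \emph{essential}, i.e.\ specifies only the corners of $\lambda(\triple)$; this costs nothing because adding a redundant rank condition $\rk(E_{p_i}\to F_{q_i})\leq r_i$ that is implied by the others neither changes $\Omega_\triple$ nor changes the partition $\lambda(\triple)$, and on the formula side it corresponds to inserting a repeated index $c(k)=c(k_i)$, which the raising-operator identity absorbs automatically.

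\textbf{Stage 1 (dominant case).} Suppose the triple imposes the conditions $E_{p_i}\subseteq F_{q_i}$, i.e.\ $r_i=p_i$, so each $E_{p_i}$ has rank equal to its index and $k_i=i$, $\ell=s$. Then $\Omega_\triple$ is (scheme-theoretically) the locus where the composite $E_{p_i}\to F_{q_i}$ vanishes; filtering by the successive line-bundle quotients of the $E$'s and the successive kernel sub bundles of the $F$'s, one sees $[\Omega_\triple]$ is a product $\prod_i c_{q_i-p_i+?}(\cdots)$ of top Chern classes of the relevant Hom-bundles, each equal to $c_{\lambda_i}(F_{q_i}-E_{p_i})$ up to twisting by a line bundle; basic formulas (\ref{ident1}) and (\ref{ident2}) rewrite these in terms of $c(i)_{\lambda_i}$. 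Since for a staircase (dominant) shape all raising operators $R_{ij}$ act trivially on a product whose indices are already in the right range, $R^{(\ell)}$ does nothing and the product equals $\Delta_{\lambda(\triple)}$.

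\textbf{Stage 2 (main case) and Stage 3 (reduction).} The main case is where the conditions are $\dim(E_{p_i}\cap F_{q_i})\geq k_i$ in the guise $\rk\leq r_i$ with general $r_i$. Here one resolves $\Omega_\triple$ by a tower of Grassmann (really, projective-bundle) maps: over the flag bundle parametrizing the forced subspaces, the relevant locus pulls back from a dominant locus handled in Stage 1, and one pushes down one step at a time. Each pushforward is computed by formula (\ref{ident4}): $\pi_*(\pi^*\alpha\cdot c_j(Q))$ is $\alpha$ when $j$ is the rank of $Q$ minus one and $0$ otherwise. The content of the argument is that this sequence of pushforwards is \emph{exactly} the action of $R^{(\ell)}=\prod_{i<j}(1-R_{ij})$ on the Stage-1 product: the factor $(1-R_{ij})$ records the two terms $1$ and $-\O(-1)$ coming from the exact sequence $0\to\O(-1)\to\pi^*E\to Q\to 0$ at the $j$-th level interacting with the $i$-th Chern class, via identity (\ref{ident2}). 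Finally, the general case is reduced to the main case by the same elementary algebra (using (\ref{ident3}) to absorb the quotients $F_{q_i}/F_{q_{i+1}}$), with no new geometry.

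\textbf{Main obstacle.} The delicate point is Stage 2: verifying that the iterated projective-bundle pushforward reproduces precisely the raising operator $R^{(\ell)}$, with correct signs and index bookkeeping, and that the Grassmannian resolution is birational onto $\Omega_\triple$ with the expected codimension (so that no excess-intersection correction appears). Ensuring the inductive setup stays within the class of triples — i.e.\ that removing one corner of $\lambda(\triple)$ again yields a valid triple with the appropriately shifted bundles — and that the partition $\lambda$ bookkeeping matches the geometric tower level by level, is where the real care is needed; the rest is the three basic identities applied repeatedly.
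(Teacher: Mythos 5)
Your four-stage skeleton matches the paper's, but Stage~1 contains a genuine error and your account of where the raising operator comes from is inverted. In the dominant case the parameters are $p_i = i$ and $r_i = 0$ (so $k_i = p_i - r_i = i$ and the map $E_{p_i}\to F_{q_i}$ vanishes), not $r_i = p_i$: with $r_i = p_i$ you would get $k_i = 0$, violating the triple axioms, and the rank bound $\rk\leq p_i$ is vacuous, so $\Omega_\triple = X$. You seem to be importing the type B/C/D picture (where $E\subseteq F$ corresponds to $k = \rk E$) into type A, where the condition is a rank bound on a map and the $F_{q_i}$ are \emph{quotients} of $F_{q_1}$, not ambient spaces. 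More seriously, the claim that $R^{(\ell)}$ ``does nothing'' in the dominant case is the opposite of the truth. The product formula reads $[\Omega_\triple] = \prod_j c_{q_j}(F_{q_j}-E_j/E_{j-1})$, involving the quotient line bundles $E_j/E_{j-1}$, while the determinant entries are $c(j) = c(F_{q_j}-E_j)$. One converts one into the other by writing $c(F_{q_j}-E_j/E_{j-1}) = c(j)\cdot\prod_{i<j}(1-t_i)$ via identity~(\ref{ident3}) and then using identity~(\ref{ident2}) to turn multiplication by $t_i$ into index-raising; $R^{(\ell)}=\prod_{i<j}(1-R_{ij})$ is created precisely here and does essential work.

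Because of this, Stage~2 is also misattributed. The pushforward down the tower of projective bundles does not produce $R^{(\ell)}$ --- the exact sequence $0\to\O(-1)\to\pi^*E\to Q\to 0$ is not the source of the factors $(1-R_{ij})$. Rather, $R^{(\ell)}$ is already present from the dominant case, commutes with $\pi_*$ by linearity, and each $\pi^{(j)}_*$ simply shifts the degree $\tilde\lambda_j\mapsto\lambda_j$ via identity~(\ref{ident4}); note also that the condition there is $j = e-1 = \rk Q$, not ``$\rk Q$ minus one.'' Finally, your preliminary reduction to essential triples goes the wrong way: the main case requires $k_i = i$ for \emph{all} $i\leq\ell$, so the general case is handled by \emph{inflating} the triple (inserting extra corners, possibly after passing to a further projective bundle to manufacture a missing $F_{q_i+1}$), and the algebraic input for the polynomial identity is Lemma~\ref{l.inflate}, not identity~(\ref{ident3}).
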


The case where there is only one $F$, so $q_1=\cdots =q_s$, was proved by Kempf and Laksov, starting the modern search for such formulas.  The general case was proved in \cite{F2}.

These formulas are to be interpreted as usual: when the bundles and the map $\phi$ are sufficiently generic, then $\Omega_\triple$ has codimension equal to $|\lambda|$ and the formula is an identity relating the fundamental class of $\Omega_\triple$ with the Chern classes of $E$ and $F$.  In general, the left-hand side should be regarded as a refined class of codimension $|\lambda|$, supported on $\Omega_\triple$; see \cite{F1}.

The proof proceeds in four steps.

\subsection{Basic case}

Assume $s=1$, $p_1=1$, $r_1=0$, so $\triple = (0,1,q_1)$.  The locus is where $E_1 \to F_{q_1}$ vanishes, so it is the zeroes of a section of $E_1^*\otimes F_{q_1}$, a vector bundle of rank $q_1$.  Therefore
\[
  [\Omega_\triple] = c_{q_1}(E_1^*\otimes F_{q_1}) = c_{q_1}(F_{q_1}-E_1),
\]
using Identity~\eqref{ident1} to obtain the second equality.

\subsection{Dominant case}

Assume $p_i=i$ and $r_i=0$ for $1\leq i\leq s$, so $k_i=i$ and $l_i=q_i$.  By imposing one condition at a time, we obtain a sequence
\[
  X = Z_0 \supseteq Z_1 \supseteq Z_2 \supseteq \cdots \supseteq Z_s = \Omega_\triple;
\]
$Z_1$ is the locus where $E_1 \to F_{q_1}$ is zero; $Z_2$ is where also $E_2/E_1 \to F_{q_2}$ is zero; and generally $Z_j$ is defined on $Z_{j-1}$ by the condition that $E_j/E_{j-1} \to F_{q_j}$ be zero.  This is an instance of the basic case, and using the projection formula, it follows that
\begin{equation}\label{e.aDom}
 [\Omega_\triple] = \prod_{j=1}^s c_{q_j}(F_{q_j} - E_j/E_{j-1}).
\end{equation}
Writing $c(j) = c(F_{q_j}-E_j)$ and $t_i = -c_1(E_i/E_{i-1})$, an application of Identity~\eqref{ident3} transforms Equation~\eqref{e.aDom} into
\begin{equation} \label{e.aDom2}
 [\Omega_\triple] = \prod_{j=1}^s \left[ c(j)\cdot \prod_{i=1}^{j-1}(1-t_i) \right]_{q_j}.
\end{equation}
Using Identity~\eqref{ident2}, this becomes
\begin{align}\label{e.aDom3}
  [\Omega_\triple] &= \left( \prod_{1\leq i<j\leq s} (1-R_{ij}) \right) \cdot (c(1)_{q_1} c(2)_{q_2} \cdots c(s)_{q_s} ) \\
   &= R^{(s)}\cdot (c(1)_{q_1} c(2)_{q_2} \cdots c(s)_{q_s} ). \nonumber
\end{align}
In other words, the product \eqref{e.aDom} is equal to the determinant $\Delta_\lambda(c(1),\ldots,c(s))$, where $\lambda = (q_1 \geq q_2 \geq \cdots \geq q_s)$. 
(To deduce \eqref{e.aDom3} from \eqref{e.aDom2}, use Identity~\eqref{ident2} and descending induction on $k$ to show \eqref{e.aDom2} equals
\begin{align*}
 & \left(\prod_{\substack{ 1\leq i<j\leq s \\ k<j}} (1-R_{ij})\right) \cdot c'(1)_{q_1}\cdots c'(k)_{q_{k}} \cdot c(k+1)_{q_{k+1}} \cdots c(s)_{q_s}, 
\end{align*}
where $c'(k)=c(F_{q_k}-E_k/E_{k-1}) = c(k)\cdot \prod_{i=1}^{k-1}(1-t_i)$.  The case $k=s$ is \eqref{e.aDom2}, and the case $k=1$ is \eqref{e.aDom3}.)

\subsection{Main case}

Assume $k_i=p_i-r_i=i$ for $1\leq i\leq s$.  There is a sequence of projective bundles
\[
  X = X_0 \leftarrow X_1 = \P(E_{p_1}) \leftarrow X_2 = \P(E_{p_2}/D_1) \leftarrow \cdots \leftarrow X_s = \P(E_{p_s}/D_{s-1}),
\]
where, suppressing notation for pullbacks of bundles, $D_j/D_{j-1} \subseteq E_{p_j}/D_{j-1}$ is the tautological line bundle on $X_j$.  Let us write $\pi^{(j)}\colon X_j \to X_{j-1}$ for the projection, and $\pi\colon X_s \to X$ for the composition of all the $\pi^{(j)}$'s.

On $X_s$, there is the locus $\tilde{\Omega}$ where $D_i/D_{i-1} \to F_{q_i}$ is zero for $1\leq i\leq s$.  This is an instance of the dominant case, so in $A^*X_s$ we have
\begin{equation}\label{e.aMain}
  [\tilde\Omega] = R^{(s)} \cdot \tilde{c}(1)_{\tilde{\lambda}_1}\cdots\tilde{c}(s)_{\tilde{\lambda}_s},
\end{equation}
where $\tilde\lambda_j = q_j$ and $\tilde{c}(j) = c(F_{q_j}-D_j)$.  Furthermore, $\pi$ maps $\tilde\Omega$ birationally onto $\Omega_\triple$; it is an isomorphism over the dense open set where $\rk(E_{p_i} \to F_{q_i}) = r_i$.  (Take $D_i$ to be the kernel.)  So $[\Omega_\triple] = \pi_*[\tilde\Omega]$.

To compute this pushforward, use Identity~\eqref{ident3} to write $\tilde{c}(j) = c(j)\cdot c(E_{p_j}/D_j)$, recalling that $c(j) = c(F_{q_j}-E_{p_j})$.  Note that $E_{p_j}/D_j$ is the tautological quotient bundle for $\pi^{(j)}\colon X_j \to X_{j-1}$.  By Identity~\eqref{ident4}, we have
\[
  \pi^{(j)}_*\left( c(j)_a\cdot c_b(E_{p_j}/D_j) \right) = c(j)_a
\]
when $b=p_j-j$, and this pushforward equals $0$ otherwise.  Therefore
\[
  \pi^{(j)}_*(\tilde{c}(j)_k) = \pi^{(j)}_*\left( \sum_{a+b=k} c(j)_a \cdot c_b(E_{p_j}/D_j) \right) = c(j)_{k-p_j+j}.
\]
Applying $\pi_*$ to \eqref{e.aMain} and using linearity of the raising operator yields
\begin{align*}
  [\Omega_\triple] &= \pi_*\left( R^{(s)}\cdot \tilde{c}(1)_{\tilde{\lambda}_1}\cdots\tilde{c}(s)_{\tilde{\lambda}_s}\right) \\
                   &= R^{(s)} \pi_*\left(  \tilde{c}(1)_{\tilde{\lambda}_1}\cdots\tilde{c}(s)_{\tilde{\lambda}_s}\right) \\
                   &= R^{(s)} c(1)_{{\lambda}_1}\cdots {c}(s)_{{\lambda}_s} \\
                   &= \Delta_\lambda(c(1),\ldots,c(s)).
\end{align*}

\subsection{General case}

Any triple $\triple=(\br,\bp,\bq)$ can be ``inflated'' to $\triple'=(\br',\bp',\bq')$ with $k'_i=p'_i-r'_i=i$, without essentially altering the locus $\Omega_\triple$ or the polynomial representing it.  Suppose $k_i-k_{i-1}>1$, so either $q_{i-1}>q_{i}$ or $p_{i-1}<p_i$ (or both).  If $q_{i-1}>q_i$, then inserting $(r_{i}+1,p_{i},q_{i}+1)$ between the $(i-1)^{\text{st}}$ and $i^{\text{th}}$ positions produces a new triple $\triple'$ with $\lambda(\triple)=\lambda(\triple')$.  If there is a bundle $F_{q_i+1}$ fitting into $F_{q_{i-1}}\twoheadrightarrow F_{q_i+1} \twoheadrightarrow F_{q_{i}}$, then one easily checks $\Omega_{\triple'} = \Omega_\triple$.  In general, it can be arranged for such an $F_{q_i+1}$ to exist by passing to an appropriate projective bundle; then the locus $\Omega_{\triple'}$ maps birationally to the original $\Omega_\triple$.  (In the case $q_{i-1}=q_i$, then $p_{i-1}<p_i-1$ and one proceeds similarly, by inserting a bundle between $E_{p_{i-1}}$ and $E_{p_i}$.)  The fact that $\Delta_{\lambda}(c'(1),\ldots,c'(\ell)) = \Delta_\lambda(c(1),\ldots,c(\ell))$ is a special case of \S\ref{ss.A.theta}, Lemma~\ref{l.inflate}.

This concludes the proof.
\qed

\bigskip

\section{Type C: symplectic bundles} \label{s.typeC}

A \define{triple of type C} is $\triple = (\bk,\bp,\bq)$, with
\begin{align*}
 0<&k_1<k_2<\cdots<k_s , \\
 &p_1\geq p_2 \geq \cdots \geq p_s >0, \\
 &q_1 \geq q_2 \geq \cdots \geq q_s.
\end{align*}
The $q_i$ are allowed to be negative, but not zero, and if $p_s=1$ then all $q_i$ must be positive.  Since the difference between positive and negative $q$ plays a major role, let $\aaa=\aaa(\triple)$ be the index such that $q_{\aaa}>0>q_{\aaa+1}$ (allowing $\aaa=0$ and $\aaa=s$ for the cases where all $q$'s are negative or all $q$'s are positive, respectively).  We will also require five further conditions, listed as \eqref{e.tripleC1}--\eqref{e.tripleC5} below, which arise naturally from the geometric setup.  
Consider an even-rank vector bundle $V$, equipped with a symplectic form and two flags of subbundles
\begin{align*}
  E_{p_1} \subset E_{p_2} \subset \cdots \subset E_{p_s} \subset V, \\
  F_{q_1} \subset F_{q_2} \subset \cdots \subset F_{q_s} \subset V.
\end{align*}
When $q>0$, the subbundles $F_q$ are isotropic; when $q<0$, $F_q$ is coisotropic; and all the bundles $E_p$ are isotropic.  If the rank of $V$ is $2n$, the isotropic bundles $E_p$ and $F_q$ (for $q>0$) have rank $n+1-p$ and $n+1-q$, respectively; and for $q<0$, the coisotropic bundles $F_q$ have rank $n-q$.  (So the order on the $p$'s and $q$'s is compatible with the inclusion of bundles of corresponding ranks.)

The degeneracy locus is
\[
  \Omega_\triple = \{ x\in X \,|\, \dim(E_{p_i} \cap F_{q_i}) \geq k_i \text{ for } 1\leq i\leq s \}.
\]
Note that $E_1^\perp=E_1$, so a condition on its intersection with a coisotropic space is equivalent to one for the intersection with an isotropic space; we shall prefer the latter, and this explains why negative $q$'s are prohibited when $p=1$.  Demanding that the rank conditions be feasible, and generically attained with equality, leads to the further requirements on the triple $\triple$.

The conditions on $\triple$ are likely to appear somewhat technical at first.  The reader may find it helpful to first assume all $q$'s are positive, which is the simplest case.  The next easiest case is when $k_i=i$ for all $i$, which corresponds to the ``main case'' in the proof.  Linear-algebraic reasons for the conditions, as well as combinatorial explanations and the relationship with \cite{BKT1}, can be found in the remarks at the end of this section.

In what follows, when indices fall out of the range $[1,s]$, we use conventions so that the inequalities become trivial, e.g., $k_0=0$, $q_0=+\infty$, and $q_{s+1}=-\infty$.

\newcounter{tempc}
\setcounter{tempc}{\value{equation}}
\setcounter{equation}{0}
\renewcommand{\theequation}{c\arabic{equation}}

First, for $i\leq a$, we require
\begin{align}
  k_{i}-k_{i-1} &\leq (p_{i-1}-p_{i}) + (q_{i-1}-q_{i}).
  \label{e.tripleC1}
\end{align}
When $k_i=i$ for all $i$, this says that either $p_{i-1}>p_i$ or $q_{i-1}>q_i$.  
When all $q$'s are positive, this is the only condition required of a triple $\triple$.

The other requirements on $\triple$ concern negative $q$'s, so they describe intersections of an isotropic $E_p$ with a coisotropic $F_q$.

For each $j\leq \aaa$, let $m(j) =  \min\{ m \,|\, q_j+(k_j-k_{j-1}-1)\geq q_{m}\}$.  The second condition is this:
\begin{align}\label{e.prohibitC2}
& \text{The negative values } \nonumber \\
&    \qquad    {-q}_j,\, -q_j-1,\, \ldots,\, -q_j-(k_j-k_{m(j)-1}-1) \\
&       \text{are all prohibited as values of }q_i\text{ for }i> \aaa. \nonumber
\end{align}
(Here is an equivalent, algorithmic condition: let $N_0 = \{1,2,3,\ldots\}$; form $N_1$ by removing the $k_1$ consecutive elements of $N_0$ starting at $q_1$; then form $N_2$ by removing $k_2-k_1$ consecutive elements of $N_1$, starting at the $q_2$-th element; and so on up to $N_\aaa$.  The absolute values of $q_i$ for $i\geq a$ are required to lie in $N_\aaa$.)  When $k_i=i$ for all $i$, this simply says that a negative value $q_i$ cannot appear if $|q_i|$ has already appeared as some $q_j$ for $j<i$.

Next, for each $i>\aaa$, so $q_i<0$, we define
\[
  \rho_{k_i} := k_j,
\]
where $j$ is the index such that $q_j>-q_i>q_{j+1}$, and require that
\begin{equation}\label{e.tripleC3}
  k_i - \rho_{k_i} \leq -q_i.
\end{equation}
When $k_i=i$ for all $i$, $\rho$ has an easy characterization as $\rho_i=\#\{j<i\,|\, q_j>-q_i\}$.

The fourth condition is that for $i>\aaa+1$,
\begin{equation}\label{e.tripleC4}
 (k_i-k_{i-1}) + (\rho_{k_{i-1}}-\rho_{k_i}) \leq (p_{i-1}-p_i) + (q_{i-1}-q_i),
\end{equation}
and if equality holds, then either $\rho_{k_{i-1}}=\rho_{k_i}$ or $k_i=k_{i-1}+1$.  This refines condition \eqref{e.tripleC1}.

The fifth and final condition is that
\begin{equation}\label{e.tripleC5}
  k_i \geq 1-p_i-q_i+\rho_{k_i}
\end{equation}
for $i>\aaa$.  (In conjunction with \eqref{e.tripleC4}, it suffices to require \eqref{e.tripleC5} only for $i=s$.)  

\renewcommand{\theequation}{\arabic{equation}}
\setcounter{equation}{\value{tempc}}

As in type A, a triple of type C has an associated partition $\lambda=\lambda(\triple)$, defined by
\begin{align*}
 \lambda_{k_i} = \begin{cases} p_i+q_i-1 & \text{if } i\leq \aaa; \\
                               p_i+q_i+k_i-1-\rho_{k_i} & \text{if } i> \aaa. \end{cases}
\end{align*}
The conditions on $\triple$ imply that
\[
  \lambda_{k_1} > \lambda_{k_2} > \cdots > \lambda_{k_\aaa} > \lambda_{k_{\aaa+1}} \geq \cdots \geq \lambda_{k_s} \geq 0.
\]
The other parts of $\lambda$ are defined by filling in $\lambda_k$ minimally subject to these inequalities (strict if $k < k_\aaa$, weak if $k>k_\aaa$).

Generally, given a unimodal sequence of nonnegative integers $\rho=(\rho_1,\ldots
,\rho_\ell)$, we will say that a partition $\lambda = (\lambda_1 \geq \cdots \geq \lambda_\ell)$ is \define{$\rho$-strict} if the sequence $\mu_j=\lambda_j+\rho_j$ is nonincreasing.

From this point of view, it is useful to extend the definition of $\rho$ given above to a unimodal sequence of integers $\rho(\triple) = (\rho_1,\rho_2,\ldots,\rho_{k_s})$, as follows.  As before, for $i>\aaa$, define $\rho_{k_i} = k_j$, where $q_j>-q_i>q_{j+1}$.  For $k\leq k_{\aaa}$, set $\rho_k=k-1$.  Then fill in the remaining entries by setting $\rho_k = \rho_{k_i}$ for $i> a$ and $k_{i-1}<k\leq k_i$.  This means $\rho_{k_{\aaa}+1} \geq \cdots \geq \rho_{k_s}$.

The conditions \eqref{e.tripleC1}--\eqref{e.tripleC5} are equivalent to requiring:
\begin{itemize}
\item $\lambda(\triple)$ is a $\rho(\triple)$-strict partition, with $\lambda_{k_{\aaa}}>\lambda_{k_{\aaa}+1}$;

\item if $\lambda_{j}=\lambda_{j+1}+1$, then $\rho_i\neq j$ whenever $i>j+1$; and

\item if $\lambda_{k_j}=\lambda_{k_{j+1}}$, then either $\rho_{k_j}=\rho_{k_{j+1}}$ or $k_{j+1}=k_j+1$.
\end{itemize}
(See Remark~\ref{r.kstrict} for the relation with the corresponding notion from \cite{BKT1}.)

For example, suppose $\triple = (\, 1\,3\,5\,6\,7\,9\, ,\; 9\,7\,6\,5\,2\,2\, ,\; 6\,3\,\bar{2}\,\bar{5}\,\bar{7}\,\bar{9}\, )$, using a bar to indicate negative integers.  Here $\aaa=2$, $\rho=(0,1,2,3,3,1,0,0,0)$, and $\lambda(\triple) = (14, 10, 9, 5, 5, 4, 1, 1, 1)$.  (See Figure~\ref{f.shape}.)

\begin{figure}[h]

\begin{center}
\pspicture(100,30)(0,-60)

\psset{unit=1.00pt}

\whitebox(-30,20)
\whitebox(-20,20)
\whitebox(-10,20)
\whitebox(0,20)
\whitebox(10,20)
\whitebox(20,20)
\whitebox(30,20)
\whitebox(40,20)
\whitebox(50,20)
\whitebox(60,20)
\whitebox(70,20)
\whitebox(80,20)
\whitebox(90,20)
\whitebox(100,20)

\graybox(-30,10)
\whitebox(-20,10)
\whitebox(-10,10)
\whitebox(0,10)
\whitebox(10,10)
\whitebox(20,10)
\whitebox(30,10)
\whitebox(40,10)
\whitebox(50,10)
\whitebox(60,10)
\whitebox(70,10)

\graybox(-30,0)
\graybox(-20,0)
\whitebox(-10,0)
\whitebox(0,0)
\whitebox(10,0)
\whitebox(20,0)
\whitebox(30,0)
\whitebox(40,0)
\whitebox(50,0)
\whitebox(60,0)
\whitebox(70,0)

\graybox(-30,-10)
\graybox(-20,-10)
\graybox(-10,-10)
\whitebox(0,-10)
\whitebox(10,-10)
\whitebox(20,-10)
\whitebox(30,-10)
\whitebox(40,-10)

\graybox(-30,-20)
\graybox(-20,-20)
\graybox(-10,-20)
\whitebox(0,-20)
\whitebox(10,-20)
\whitebox(20,-20)
\whitebox(30,-20)
\whitebox(40,-20)

\graybox(-30,-30)
\whitebox(-20,-30)
\whitebox(-10,-30)
\whitebox(0,-30)
\whitebox(10,-30)

\whitebox(-30,-40)

\whitebox(-30,-50)

\whitebox(-30,-60)

\endpspicture
\end{center}

\caption{The shape for a $\rho(\triple)$-strict partition $\lambda(\triple)$.  The boxes of $\rho$ are shaded. \label{f.shape}}

\end{figure}

Given an integer $\ell>0$, a sequence of nonnegative integers $\rho = (\rho_1,\ldots,\rho_\ell)$ with $\rho_j<j$, define the raising operator
\begin{align*}
  R^{(\rho,\ell)} &=  \left(\prod_{1\leq i<j\leq \ell} (1-R_{ij}) \right)\left(\prod_{1\leq i\leq \rho_j < j \leq \ell} (1+R_{ij})^{-1}\right).
\end{align*}
Inspired by \cite{BKT1}, given symbols $c(1),\ldots,c(\ell)$, and a $\rho$-strict partition $\lambda$, we define the \define{theta-polynomial} to be
\[
  \Theta^{(\rho)}_\lambda( c(1),\ldots,c(\ell) ) = R^{(\rho,\ell)} \cdot (c(1)_{\lambda_1}\cdots c(\ell)_{\lambda_{\ell}}).
\]
When $\rho=\emptyset$, $\Theta^{(\rho)}_\lambda$ is a Schur determinant, and when $\rho_j = j-1$, $\Theta^{(\rho)}_\lambda$ is a Schur Pfaffian.  

For a triple and the corresponding geometry described above, let $c(k_i) = c(V-E_{p_i}-F_{q_i})$, and for general $k$, take $c(k)=c(k_i)$ where $i$ is minimal such that $k_i\geq k$.  Set $\ell=k_s$.

\begin{thm}\label{t.typeC}
We have $[\Omega_\triple] = \Theta^{(\rho(\triple))}_{\lambda(\triple)}(c(1),c(2),\ldots,c(\ell))$.
\end{thm}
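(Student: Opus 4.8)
The plan is to follow the four-step template established in the type A proof (\S\ref{s.typeA}), adapting each step to the symplectic setting.

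\medskip

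\textbf{Step 1: Basic case.} Suppose $s=1$, $k_1=1$, $p_1=p$, so the condition is simply $E_p\subseteq F_{q_1}$, and we further reduce (as in type A) to the case where $D:=E_p$ is replaced by a line bundle, i.e. we compute the class of the locus where a tautological line bundle $D$ lies inside $F_q$. When $q>0$, so $F_q$ is isotropic, $D\subseteq F_q$ is the vanishing of a section of $D^*\otimes(V/F_q)$; but because $F_q$ is isotropic and $D\subseteq F_q$, the section actually lands in $D^*\otimes(V/F_q)$ with $V/F_q$ of rank $n-1+q$, and using $F_q^\perp=F_{-q+1}$ together with the symplectic form one identifies the relevant sub-locus so that the class is $c_{q}(V-F_q-D)$-type; the upshot is $[\Omega]=c_{?}(V-E_p-F_q)$ with the correct index dictated by $\mu$ and $\rho$. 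When $q<0$, so $F_q$ is coisotropic, the key input is the geometric identity \eqref{e.geo-rho}: the condition $D\subseteq F_q$ is governed by $D^\perp/(F_q\cap D^\perp)=D_\rho^\perp/F_q$, which converts a coisotropic condition into an isotropic one at the cost of recording $\rho$. This is the step where the definition of $\rho_j$ earns its keep, and I expect it to be the main obstacle: one must carefully track how the symplectic form turns the naive Chern class of a quotient bundle into the shifted index $\mu_{k_i}-\rho_{k_i}=\lambda_{k_i}$, and verify the cases $i<a$ versus $i\ge a$ separately (accounting for the $-2$ versus $-1$ in the definition of $\mu_{k_i}$, and the self-dual subtlety when $D$ meets its own perp).

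\medskip

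\textbf{Step 2: Dominant case.} Assume $k_i=i$ and $p_i=n+1-i$ (the ``dominant'' case), so the conditions are $E_i\subseteq F_{q_i}$ with $E_\bullet$ a full isotropic flag. Imposing one condition at a time gives a chain $X=Z_0\supseteq Z_1\supseteq\cdots\supseteq Z_s=\Omega_\triple$ with each step an instance of the basic case applied to the line bundle $E_i/E_{i-1}$, so $[\Omega_\triple]$ is a product of the Step-1 classes. Then, exactly as in type A, I would introduce $t_i=-c_1(E_i/E_{i-1})$, apply Identity~\eqref{ident3} to peel off the $E_{i-1}$ contributions, and apply Identity~\eqref{ident2} to absorb powers of $t_i$ into Chern-class indices. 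The new feature compared to type A is the coisotropic factors ($i\ge a$): here the $(1+R_{ij})^{-1}$ factors in $R^{(\rho,\ell)}$ must emerge, and they do so precisely because for $i\le\rho_j<j$ the two bundles $E_{p_j}$ and $F_{q_j}^\perp$ overlap, producing a $c(E_i/E_{i-1}\otimes\cdots)$ term whose expansion is a geometric series $(1-t_i)^{-1}$ rather than a polynomial $(1-t_i)$. Collecting everything yields $[\Omega_\triple]=R^{(\rho,\ell)}\cdot(c(1)_{\lambda_1}\cdots c(\ell)_{\lambda_\ell})=\Theta^{(\rho)}_\lambda$ for the dominant $\lambda=\mu/\rho$.

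\medskip

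\textbf{Step 3: Main case.} Assume $k_i=i$ but $p_i$ arbitrary. As in type A, build a tower of projective bundles $X=X_0\leftarrow X_1=\P(E_{p_1})\leftarrow\cdots\leftarrow X_s=\P(E_{p_s}/D_{s-1})$ with tautological line subbundles $D_j/D_{j-1}$, and let $\pi\colon X_s\to X$ be the composite. On $X_s$ the locus $\tilde\Omega$ where $D_i\subseteq F_{q_i}$ for all $i$ is an instance of the dominant case (Step 2 applied to the flag $D_\bullet$), and $\pi$ maps $\tilde\Omega$ birationally onto $\Omega_\triple$ — an isomorphism over the open set where $\dim(E_{p_i}\cap F_{q_i})=k_i$ exactly, taking $D_i$ to be a generic such intersection. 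So $[\Omega_\triple]=\pi_*[\tilde\Omega]$. To push forward, write $\tilde c(j)=c(j)\cdot c(E_{p_j}/D_j)$ via Identity~\eqref{ident3}, note $E_{p_j}/D_j$ is the tautological quotient for $\pi^{(j)}$, and apply Identity~\eqref{ident4} factor by factor; the index shift $\tilde c(j)_k\mapsto c(j)_{k-p_j+j}$ converts $\tilde\lambda$ into $\lambda$, and since $R^{(\rho,s)}$ is $\mathbb Z$-linear it commutes with $\pi_*$, giving $[\Omega_\triple]=R^{(\rho,s)}\cdot(c(1)_{\lambda_1}\cdots c(s)_{\lambda_s})=\Theta^{(\rho)}_\lambda$. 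The only subtlety beyond type A is confirming that the $\rho_j$'s computed from $\triple$ agree with the $\rho_j$'s computed from the dominant resolution — but this is exactly what the geometric identity \eqref{e.geo-rho} and the definition of $\rho$ guarantee, provided one has arranged $r=k_{a-1}$ by the inflation described before the theorem.

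\medskip

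\textbf{Step 4: General case.} Finally, reduce an arbitrary triple to one with $k_i=i$ by ``inflation'': insert intermediate entries $(k_{i-1}+1,p_i,q_i+1)$ (or the appropriate coisotropic analogue, possibly passing to a projective bundle so that a bundle $F_{q_i+1}$ exists in the flag), which does not change $\lambda(\triple)$ nor, up to a birational map, the locus $\Omega_\triple$. That the theta-polynomial is unchanged under this inflation is the content of Lemma~\ref{l.inflate} (Appendix~\ref{ss.A.theta}), the analogue of the type A statement $\Delta_\lambda(c'(1),\ldots)=\Delta_\lambda(c(1),\ldots)$. Combining Steps 3 and 4 completes the proof. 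I expect Step 1 (equivalently, the coisotropic bookkeeping that also recurs in Step 2) to be the genuine mathematical obstacle; Steps 3 and 4 are, modulo the combinatorics already set up in the statement, formal consequences of Identities~\eqref{ident1}–\eqref{ident4} and the raising-operator formalism.
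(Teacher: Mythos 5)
Your four-step skeleton matches the paper's exactly (the paper telegraphs it in the introduction and in \S\ref{s.typeA}), and Steps 3 and 4 are accurate summaries of the main and general cases. But there is a genuine gap in Steps 1 and 2, centered on \emph{where} the nontrivial symplectic geometry actually enters. First, you invoke the identity~\eqref{e.geo-rho} in Step 1 for the basic case with $q_1<0$, but the basic case is entirely uniform in sign: since $k_1=1$ and $p_1=n$, $E_n$ is a line bundle, the condition $E_n\subseteq F_{q_1}$ is just the vanishing of a section of $E_n^*\otimes(V/F_{q_1})$, and the answer is $c_{\rk(V/F_{q_1})}(V-F_{q_1}-E_n)$ whether $F_{q_1}$ is isotropic or coisotropic. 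The identity~\eqref{e.geo-rho} is used only in the dominant case, to rewrite the condition $D_j/D_{j-1}\subseteq(F_{q_j}\cap D_{j-1}^\perp)/D_{j-1}$ inside the quotient $D_{j-1}^\perp/D_{j-1}$ (or $D_{\rho_j}^\perp/D_{j-1}$), where the accumulated flag $D_\bullet$ is what makes $\rho_j$ nontrivial.

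Second, and more fundamentally, you never invoke the ingredient that actually produces the $(1+R_{ij})^{-1}$ denominators: the symplectic self-duality $D^\perp\isom(V/D)^*$. In the dominant case this gives
$c(D_{\rho_j}^\perp/D_{j-1}-F_{q_j}/D_{j-1}-D_j/D_{j-1}) = c(V-D_j-F_{q_j})\cdot c(D_{j-1}-D_{\rho_j}^*)$,
and it is the factor $c(D_{\rho_j}^*) = \prod_{i\leq\rho_j}(1+t_i)$ in the denominator that becomes $\prod_{i\leq\rho_j}(1+R_{ij})^{-1}$ via Identity~\eqref{ident2}. Your account attributes these factors to a vague ``overlap'' of $E_{p_j}$ with $F_{q_j}^\perp$ and writes the geometric series as $(1-t_i)^{-1}$; the sign there is not a typo you can afford, since $(1-R_{ij})^{-1}$ and $(1+R_{ij})^{-1}$ are entirely different operators. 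Relatedly, you describe the $(1+R_{ij})^{-1}$ factors as a coisotropic phenomenon appearing only for $j\geq a$, but they appear for \emph{every} $j$ (with $\rho_j=j-1$ when $j<a$) — this is precisely why the all-$q_i>0$ case gives a Pfaffian rather than reducing to the type A determinant. Without the duality $D^\perp\isom(V/D)^*$ the theta-polynomial structure does not emerge, so this omission is the missing idea, not a detail.
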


The proof follows the same pattern as the one we saw in type A.  As before, there are four cases.  The appearance of Pfaffians in the formulas can be traced to a basic fact about isotropic subbundles: if $D\subset V$ is isotropic, then the symplectic form identifies $D^\perp$ with $(V/D)^*$.  This is used in the second case.

\subsection{Basic case}

Take $s=1$, $k_1=\ell=1$, and $p_1=n$, so $E_n$ is a line bundle and we are looking at $\Omega_\triple = \{ x\,|\, E_n \subseteq F_{q_1} \}$.  Equivalently, $E_n \to V/F_{q_1}$ is zero, so Identity~\eqref{ident1} lets us write
\[
  [\Omega_\triple] = c_{\lambda_1}(V/F_{q_1} \otimes E_n^*) = c_{\lambda_1}(V-F_{q_1}-E_n)
\]
where
\[
  \lambda_1 = \rk( V/F_{q_1} ) = \begin{cases} n+q_1-1 &\text{ if } q_1>0; \\ n+q_1 &\text{ if } q_1<0. \end{cases}
\]

\subsection{Dominant case}

Now take $k_i=i$ and $p_i = n+1-i$, for $1\leq i\leq s$.  Write $D_i = E_{p_i}$, so this is a vector bundle of rank $i$, and we have $D_1 \subset D_2 \subset \cdots \subset D_s \subset V$.  Letting $Z_j$ be the locus in $X$ where $D_i \subseteq F_{q_i}$ for all $i\leq j$, we have
\[
 X \supseteq Z_{1} \supseteq \cdots \supseteq Z_{s-1} \supseteq Z_s = \Omega_{\triple}.
\]
On $Z_{j-1}$, we have $D_{j-1} \subseteq F_{q_{j-1}} \subseteq F_{q_j} \subseteq V$.  Since $D_j$ is isotropic, we automatically have $D_j \subseteq D_{j-1}^\perp$, so $Z_j$ is defined by the condition
\[
  D_j/D_{j-1} \subseteq (F_{q_{j}}\cap D_{j-1}^\perp)/D_{j-1} \subseteq D_{j-1}^\perp/D_{j-1},
\]
or equivalently, $D_j/D_{j-1} \to D_{j-1}^\perp/ (F_{q_{j}}\cap D_{j-1}^\perp)$ is zero.

When $j\leq \aaa$, the bundle $F_{q_j}$ is isotropic, and this implies $F_{q_j} \subseteq D_{j-1}^\perp$.  In this case, $Z_j$ is equivalently defined by $D_j/D_{j-1} \subseteq F_{q_j}/D_{j-1} \subseteq D_{j-1}^\perp/D_{j-1}$, 
and the basic case says 
\begin{align*}
  [Z_j] &= [Z_{j-1}] \cdot c_{\lambda_j}(D_{j-1}^\perp/D_{j-1} - F_{q_j}/D_{j-1} - D_j/D_{j-1}),
\end{align*}
with $\lambda_j = q_j + p_j - 1 = q_j + n - j$.  (That is, $\lambda_j=\rk(D_{j-1}^\perp/F_{q_j})$.)

When $j>\aaa$ (so $q_j<0$), we have
\[
  D_{j-1}^\perp/(F_{q_j}\cap D_{j-1}^\perp) = (D_{j-1}\cap F_{q_j}^\perp)^\perp/F_{q_j} = D_{\rho_j}^\perp/F_{q_j}
\]
by property \eqref{e.geo-rho} from Remark~\ref{r.linalg}.  (Recall that $\rho_j=i$, where $q_i>-q_j>q_{i+1}$.)  The basic case says
\begin{align*}
  [Z_j] &= [Z_{j-1}] \cdot c_{\lambda_j}(D_{\rho_j}^\perp/D_{j-1} - F_{q_j}/D_{j-1} - D_j/D_{j-1}),
\end{align*}
with $\lambda_j = n+q_j-i = p_j+q_j+j-1-\rho_j$.  (That is, $\lambda_j=\rk(D_{\rho_j}^\perp/F_{q_j})$.)

With $\rho_j=j-1$ for $1\leq j\leq \aaa$, it follows that
\begin{align*}
  [\Omega_\triple]  & = \left( \prod_{j=1}^s c_{\lambda_j}(D_{\rho_j}^\perp/D_{j-1} - F_{q_j}/D_{j-1} - D_j/D_{j-1}) \right).
\end{align*}
Using the symplectic form to identify $D_{\rho_j}^\perp$ with $(V/D_{\rho_j})^*$, we have
\[
  c(D_{\rho_j}^\perp/D_{j-1} - F_{q_j}/D_{j-1} - D_j/D_{j-1}) = c(V-D_j-F_{q_j})\cdot c(D_{j-1}-D_{\rho_j}^*);
\]
setting $t_j = -c_1(D_j/D_{j-1})$ and applying Identity~\eqref{ident3}, this becomes
\begin{align}\label{e.typeCt}
  [\Omega_\triple] &=  \prod_{j=1}^s \left[c(V-D_j-F_{q_j}) \frac{ \prod_{i=1}^{j-1} (1-t_i)}{\prod_{i=1}^{\rho_j} (1+t_i) }
\right]_{\lambda_j}.
\end{align}
Setting $c(j) = c(V-D_j-F_{q_j})=c(V-E_{p_j}-F_{q_j})$ and applying Identity~\eqref{ident2}, this is
\begin{equation}\label{e.typeCop}
  [\Omega_\triple] = R^{(\rho,s)} c(1)_{\lambda_1} \cdots c(s)_{\lambda_s}.
\end{equation}

\subsection{Main case}

Here we only assume $k_i=i$, for $1\leq i\leq s$.  For $j\leq a$, we set $\rho_j=j-1$, and for $j> a$, we have $\rho_j = i$, where $q_i>-q_j>q_{i+1}$.  We have the same sequence of projective bundles as in type A,
\[
  X = X_0 \leftarrow X_1 = \P(E_{p_1}) \leftarrow X_2 = \P(E_{p_2}/D_1) \leftarrow \cdots \leftarrow X_s = \P(E_{p_s}/D_{s-1});
\]
again, $D_j/D_{j-1} \subseteq E_{p_j}/D_{j-1}$ is the tautological subbundle on $X_j$.  Write $\pi^{(j)} \colon X_j \to X_{j-1}$ for the projection, and $\pi\colon X_s \to X$ for the composition.

On $X_s$, we have the locus $\tilde\Omega = \{ D_i \subseteq F_{q_i}\,|\, \text{ for } 1\leq i\leq s\}$.  By the previous case, as a class in $A^*X_s$ we have
\[
  [\tilde\Omega] = R^{(\rho,s)} \tilde{c}(1)_{\tilde\lambda_1} \cdots \tilde{c}(s)_{\tilde\lambda_s},
\]
where $\tilde{c}(j) = c(V-D_j-F_{q_j})$ and
\[
 \tilde\lambda_j = \begin{cases} q_j+n-j &\text{ if }j\leq a; \\ n+q_j-\rho_j &\text{ if } j> a . \end{cases}
\]
Furthermore, $\pi$ maps $\tilde\Omega$ birationally onto $\Omega_\triple$.

For each $i$, using Identity \eqref{ident4} we have
\[
  \pi^{(i)}_*\tilde{c}(i)_{\tilde\lambda_i} = c(i)_{\lambda_i},
\]
where $\lambda = \lambda(\triple)$.  (Note that $\lambda_i = \tilde\lambda_i - \rk(E_{p_i}/D_{i}) = \tilde\lambda_i - n - 1 + p_i + i$.)

This case follows, since
\begin{align*}
  \pi_*[\tilde\Omega] &= \pi_*R^{(\rho,s)}\tilde{c}(1)_{\tilde\lambda_1} \cdots \tilde{c}(s)_{\tilde\lambda_s} \\
     &= R^{(\rho,s)} \pi_*\tilde{c}(1)_{\tilde\lambda_1} \cdots \tilde{c}(s)_{\tilde\lambda_r} \\
     &= R^{(\rho,s)} {c}(1)_{\lambda_1} \cdots {c}(s)_{\lambda_s} \\
     &= \Theta^{(\rho)}_\lambda(c(1),\ldots,c(s)).
\end{align*}

\subsection{General case}

As in type A,  any triple $\triple=(\bk,\bp,\bq)$ can be inflated to a triple $\triple'=(\bk',\bp',\bq')$ having $k'_i=i$, for $1\leq i\leq \ell=k_s$, so that $\triple$ and $\triple'$ define equivalent degeneracy loci.  To do this, it suffices to insert $(k',p',q')$ between $(k_{i-1},p_{i-1},q_{i-1})$ and $(k_i,p_i,q_i)$ whenever $k_i-k_{i-1}>1$.  If $p_{i-1}>p_i$, one can always insert $(k_i-1,p_i+1,q_i)$.  If $q_{i-1}>q_i$, one can insert $(k_i-1,p_i,q_i+1)$.  (For $i\leq a$, condition \eqref{e.prohibitC2} ensures the result is still a triple; for $i>a$, this is guaranteed by \eqref{e.tripleC4}.)  
Note that $\lambda(\triple)=\lambda(\triple')$ and $\rho(\triple)=\rho(\triple')$.

When all the additional bundles $E_{p'_i}$ and $F_{q'_i}$ are present on $X$, one has $\Omega_{\triple'}=\Omega_{\triple}$; otherwise, they can be found by appropriate projective bundles, producing a birational map $\Omega_{\triple'} \to \Omega_\triple$.

The ``main case'' provides a theta-polynomial formula using the triple $\triple'$ and classes $c'(k'_i) = c'(i)= c(V-E_{p'_i}-F_{q'_i})$.  For $k\leq k_a$ and $m\geq \lambda_k$, we have relations
\begin{align*}
  c(k)_m^2 + 2\sum_{j>0} (-1)^j\, c(k)_{m+j}\, c(k)_{m-j} = 0.
\end{align*}
Indeed, we may suppose $k=k_i$ for some $i$, so the LHS can be written
\begin{align*}
 \sum_{j=-\infty}^\infty (-1)^j c(k)_{m+j}\, c(k)_{m-j}  &= (-1)^{m} c_{2m}(V-E_{p_i}-F_{q_i} + V^* - E_{p_i}^* - F_{q_i}^* ) \\
  & = (-1)^{m} c_{m}( E_{p_i}^\perp / E_{p_i} + (F_{q_i}^\perp/F_{q_i})^* ),
\end{align*}
which vanishes because the bundle in the last line has rank $2\lambda_k-2$.  
These are the relations required in the hypothesis of \S\ref{ss.A.theta}, Lemma~\ref{l.inflate}, which shows that $\Theta^{(\rho)}_{\lambda}(c'(1),\ldots,c'(\ell)) = \Theta^{(\rho)}_\lambda(c(1),\ldots,c(\ell))$. \qed

\bigskip

As mentioned above, in extreme cases the theta-polynomial is a determinant or Pfaffian.  To include the case where $\ell$ is odd, we recall that Pfaffians can be defined for odd matrices $(m_{ij})$ by introducing a zeroth row, $m_{0j}$ (see Appendix~\ref{ss.A.pf}).

\begin{cor*}
If all $q_i>0$, then
\[
  [\Omega_\triple] = \Pf_\lambda( c(1), \ldots, c(\ell) ),
\]
where the right-hand side is defined to be the Pfaffian of the matrix $(m_{ij})$, with
\[
  m_{ij} = c(i)_{\lambda_i}\, c(j)_{\lambda_j} + 2\sum_{a>0} (-1)^a c(i)_{\lambda_i+a}\, c(j)_{\lambda_j-a},
\]
and when $\ell$ is odd, the matrix is augmented by $m_{0j}=c(j)_{\lambda_j}$ for $0< j\leq \ell$.
\end{cor*}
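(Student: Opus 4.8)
The plan is to specialize Theorem~\ref{t.typeC} and recognize that, when all $q_i>0$, the theta-polynomial $\Theta^{(\rho)}_\lambda$ is a Schur Pfaffian. First I would note that $q_i>0$ for all $i$ means $a=a(\triple)=s+1$, so in the construction of $\rho(\triple)$ one has $r=r(\triple)=\max\{k_{a-1},\,q_a+k_a\}=k_s=\ell$ (using the convention $q_{s+1}=-\infty$), hence $k_{a-1}=k_s=\ell=r$ and no preliminary insertion occurs, and therefore $\rho_k=k-1$ for every $k$ with $1\le k\le\ell$. (Condition~\eqref{en.tripleCla} with $r=\ell$ then forces $\lambda$ to be strict, so $\Pf_\lambda$ is an honest Schur Pfaffian.) With $\rho_j=j-1$ the clause ``$1\le i\le\rho_j<j\le\ell$'' in the definition of $R^{(\rho,\ell)}$ is simply ``$1\le i<j\le\ell$'', so
\[
  R^{(\rho,\ell)} \;=\; \prod_{1\le i<j\le\ell}\frac{1-R_{ij}}{1+R_{ij}},
\]
and by Theorem~\ref{t.typeC},
\[
  [\Omega_\triple] \;=\; \Theta^{(\rho)}_{\lambda}(c(1),\ldots,c(\ell)) \;=\; \Bigl(\prod_{1\le i<j\le\ell}\frac{1-R_{ij}}{1+R_{ij}}\Bigr)\cdot\bigl(c(1)_{\lambda_1}\cdots c(\ell)_{\lambda_\ell}\bigr).
\]

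Next I would expand $\frac{1-R_{ij}}{1+R_{ij}}=1+2\sum_{k\ge1}(-1)^kR_{ij}^k$ and apply it to a single product of two symbols:
\[
  \frac{1-R_{ij}}{1+R_{ij}}\bigl(c(i)_{\lambda_i}c(j)_{\lambda_j}\bigr)
  = c(i)_{\lambda_i}c(j)_{\lambda_j}+2\sum_{k>0}(-1)^k c(i)_{\lambda_i+k}c(j)_{\lambda_j-k}=m_{ij},
\]
which is precisely the entry appearing in the statement. It then remains to show that applying the ``all-pairs'' operator $\prod_{i<j}\frac{1-R_{ij}}{1+R_{ij}}$ to the monomial $c(1)_{\lambda_1}\cdots c(\ell)_{\lambda_\ell}$ reproduces the Pfaffian of $(m_{ij})$, whose entries are exactly these single-pair expressions. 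For $\ell$ even this is the raising-operator description of a Schur Pfaffian established in Appendix~\ref{ss.A.pf}, which I would invoke directly. For $\ell$ odd I would reduce to the even case by adjoining a formal symbol $c(0)=1$ with $\lambda_0=0$: since $c(0)$ has no component in positive degree, each new factor $\frac{1-R_{0j}}{1+R_{0j}}$ acts as the identity on $c(0)_0\,c(j)_{\lambda_j}$, so
\[
  \Theta^{(\rho)}_\lambda(c(1),\ldots,c(\ell))
  =\Bigl(\prod_{0\le i<j\le\ell}\frac{1-R_{ij}}{1+R_{ij}}\Bigr)\cdot\bigl(c(0)_0\,c(1)_{\lambda_1}\cdots c(\ell)_{\lambda_\ell}\bigr);
\]
this now has the even number $\ell+1$ of symbols, and the even-case identity turns it into $\Pf(m_{ij})_{0\le i,j\le\ell}$, where the extra entries are $m_{0j}=\frac{1-R_{0j}}{1+R_{0j}}(c(0)_0\,c(j)_{\lambda_j})=c(j)_{\lambda_j}$, matching the zeroth-row convention. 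Combining this with the displays above yields $[\Omega_\triple]=\Pf_\lambda(c(1),\ldots,c(\ell))$.

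The one nontrivial ingredient is the identity equating the all-pairs raising operator with the Pfaffian of the single-pair entries; this is the genuinely combinatorial step, proved by induction on $\ell$ via the Laplace (cofactor) expansion of a Pfaffian along its last row, together with the factorization $\prod_{i<j\le\ell}\frac{1-R_{ij}}{1+R_{ij}}=\bigl(\prod_{i<j\le\ell-1}\frac{1-R_{ij}}{1+R_{ij}}\bigr)\prod_{i<\ell}\frac{1-R_{i\ell}}{1+R_{i\ell}}$. Since Appendix~\ref{ss.A.pf} establishes precisely this fact, here it can simply be cited; everything else is the elementary bookkeeping described above.
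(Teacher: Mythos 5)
Your proof is correct and follows essentially the same route as the paper: specialize Theorem~\ref{t.typeC}, observe that all $q_i>0$ forces $\rho_j=j-1$ for every $j\leq\ell$, so that $R^{(\rho,\ell)}=\prod_{i<j}\frac{1-R_{ij}}{1+R_{ij}}$, and then convert that raising-operator product into a Pfaffian. The paper's own proof is a one-line citation of the Proposition of Appendix~\ref{ss.A.raising}, which is exactly the statement you need and which already treats even and odd $\ell$ uniformly via the $m_{0j}$ convention. Two small remarks. First, you cite Appendix~\ref{ss.A.pf} for the raising-operator-to-Pfaffian step, but that subsection only proves the rational-function identity in the variables $T_i$ (with $\delta_i$'s); the translation of that identity into the formal raising-operator setting acting on $c_\lambda$ --- including the bookkeeping with the index set $P$, vanishing of $c(i)_r$ for $r<0$, and the ``evaluation'' map --- is precisely the content of the Proposition in Appendix~\ref{ss.A.raising}, which is what should be invoked. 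Second, because that Proposition already covers odd $\ell$ directly through the zeroth-row entries $m_{0j}=c(j)_{\lambda_j}$, your reduction by adjoining a formal $c(0)=1$ is unnecessary, although it is correct and does reproduce the same $m_{0j}$. These are matters of citation and economy rather than gaps; the argument is sound.
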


\noindent
This follows from the Proposition of \S\ref{ss.A.raising}.  Our conventions for Pfaffians, as in the Appendix, are that one forms a skew-symmetric matrix by defining $m_{ji}=-m_{ij}$ for $i<j$, and $m_{ii}=0$.  (In fact, following the proof of \cite[Theorem~1.1]{K}, it is equivalent to  define the $m_{ij}$ for all $i,j$ by the formula of the Corollary, but we do not need this.)

\begin{rmk}\label{r.linalg}
Here are geometric reasons for the conditions on a triple.  Only elementary linear algebra and basic facts about nondegenerate bilinear forms are needed---an isotropic subspace has dimension at most half that of the ambient space; $(E\cap F)^\perp = E^\perp + F^\perp$; and $(V/E)^* \isom E^\perp$.

Condition~\eqref{e.tripleC1} has a simple explanation:
\[
  (E_{p_{i}} \cap F_{q_{i}})/(E_{p_{i-1}} \cap F_{q_{i-1}}) \subseteq E_{p_{i}}/E_{p_{i-1}} \oplus F_{q_{i}}/F_{q_{i-1}}.
\]

Condition \eqref{e.tripleC3} arises from asking that the coisotropic condition imposed by $F_{q_i}$ be independent of any isotropic conditions, in the following sense.  With $i$ and $j$ as in the definition of $\rho$, so $i>a$ and $\rho_{k_i}=k_j$, we have $F_{q_j} \subseteq F_{q_i}^\perp$ (and this is the largest among the $F$'s which is contained in $F_{q_i}^\perp$).  For generic $E_p \supseteq E_{p_j}$, we require
\[
 \dim(E_{p} \cap F_{q_i}^\perp) = \dim(E_{p_j} \cap F_{q_i}^\perp) = \dim(E_{p_j} \cap F_{q_j}) = k_j,
\]
and in particular this holds for $p=p_i$ and $p=p_{i-1}$.  Thus
\renewcommand{\theequation}{$*$}
\begin{equation}\label{e.geo-rho}
  \rho_{k_i} = \dim(E_{p_i} \cap F_{q_i}^\perp) = \dim(E_{p_{i-1}} \cap F_{q_i}^\perp) \\
\end{equation}
for generic spaces, a formulation which is used in proving the main theorem of this section.  
\renewcommand{\theequation}{\arabic{equation}}
\addtocounter{equation}{-1}
Now Condition \eqref{e.tripleC3} is a consequence of  \eqref{e.geo-rho} and the fact that
\[
  (E_{p_i}\cap F_{q_i})/(E_{p_i}\cap F_{q_i}^\perp) = ((E_{p_i}\cap F_{q_i}) + F_{q_i}^\perp) / F_{q_i}^\perp \subseteq F_{q_i} / F_{q_i}^\perp.
\]
(Since $\dim( F_{q_i} / F_{q_i}^\perp ) = -2q_i$, an isotropic subspace has dimension at most $-q_i$.)

Condition \eqref{e.tripleC4} is similar to \eqref{e.tripleC1}: one has
\[
  (E_{p_{i}} \cap F_{q_{i}})/(E_{p_{i-1}} \cap F_{q_{i-1}}) \hookrightarrow E_{p_{i}}/E_{p_{i-1}} \oplus F_{q_{i}}/F_{q_{i-1}} \twoheadrightarrow (E_{p_i}+F_{q_i})/(E_{p_{i-1}}+F_{q_{i-1}}),
\]  
and one sees $\dim( (E_{p_i}+F_{q_i})/(E_{p_{i-1}}+F_{q_{i-1}}) ) \geq \rho_{k_{i-1}}-\rho_{k_i}$ from the following diagram:
\[
\begin{tikzcd}
  (V/(E_{p_i}+F_{q_i}))^*  \ar[r,hook] & (V/(E_{p_{i-1}}+F_{q_{i-1}}))^* \ar[r,two heads]  & ((E_{p_i}+F_{q_i})/(E_{p_i}+F_{q_i}))^* \\
 E_{p_i}\cap F_{q_i}^\perp \ar[r,hook] \ar[u,hook] & E_{p_{i-1}} \cap F_{q_{i-1}}^\perp \ar[u,hook] \ar[r,two heads]  & (E_{p_{i-1}} \cap F_{q_{i-1}}^\perp)/(E_{p_i}\cap F_{q_i}^\perp). \ar[u,hook,swap,"\alpha"]
\end{tikzcd}
\]
The map $\alpha$ is injective because $(E_{p_i}^\perp \cap F_{q_i}^\perp) \cap (E_{p_i}\cap F_{q_i}^\perp) = E_{p_i}\cap F_{q_i}^\perp$.

Condition \eqref{e.tripleC5} comes from writing $E_{p_i} \cap F_{q_i}$ as the kernel of
\[
  E_{p_i} \to (E_{p_i}^\perp + F_{q_i})/F_{q_i}.
\]

Finally, for \eqref{e.prohibitC2}, consider the case of a single $q_i=q$; if $E_p$ and $F_q$ are isotropic subspaces such that $\dim(E_p\cap F_q)=k$, then for generic subspaces $F_q \supset F_{q+1} \supset \cdots \supset F_{q+k}$, one has
\begin{equation*}\label{e.equal}
  E_p^\perp \cap F_q^\perp = E_p^\perp \cap F_{q+1}^\perp = \cdots = E_p^\perp\cap F_{q+k}^\perp.
\end{equation*}
For any isotropic $E_{p'}\supseteq E_p$, it follows that
\[
  E_{p'} \cap F_q^\perp = E_{p'} \cap F_{q+1}^\perp = \cdots = E_{p'}\cap F_{q+k}^\perp,
\]
so we should only impose rank conditions on the first of these.  Writing $F_q^\perp = F_{-q+1}$, $F_{q+1}^\perp = F_{-q}$, etc., this means the values $-q,-q-1,\ldots,-q-k+1$ should be prohibited.  Accounting for intersections previously imposed leads to the above condition \eqref{e.prohibitC2}.
\end{rmk}

\begin{rmk}
A type C triple determines a signed permutation $w(\triple)$, as follows.  Starting in position $p_1$, first place $k_1$ consecutive integers in increasing order, ending with $\bar{q}_1$.  Then starting in position $p_2$ (or the next available position to the right of $p_2$), place $k_2-k_1$ integers, consecutive among those whose absolute values have not been used, ending in at most $\bar{q}_2$.  Continue until $k_s$ numbers have been placed, and finish by filling in the gaps with the smallest available positive integers.

The conditions on $\triple$ can be understood combinatorially: they guarantee that the length of the signed permutation $w(\triple)$ is equal to the size of the partition $\lambda(\triple)$; this can be checked directly from the construction, using the combinatorial characterization of length from \cite[\S8.1]{BB}.  They also ensure that $\rho_{k_i}$ is the number of entries of $w$ which are less than $-|q_i|$; that the $k_\aaa$ entries of $w$ placed in the first $\aaa$ steps of the above recipe are all negative; and that the entries placed after the first $\aaa$ steps are all positive.

For example, consider $\triple = (\, 1\,3\,5\,6\,7\,9\, ,\; 9\,7\,6\,5\,2\,2\, ,\; 6\,3\,\bar{2}\,\bar{5}\,\bar{7}\,\bar{9}\, )$.  The corresponding signed permutation is built in seven steps:
\begin{align*}
   & \cdot\;\cdot\;\cdot\;\cdot\;\cdot\;\cdot\;\cdot\;\cdot\;{\bf\bar{6}}\;\cdot\;, \\
   &  \cdot\;\cdot\;\cdot\;\cdot\;\cdot\;\cdot\;{\bf\bar{4}\;\bar{3}}\;{\bar{6}}\;\cdot\;, \\
   & \cdot\;\cdot\;\cdot\;\cdot\;\cdot\;{\bf 1}\;{\bar{4}\;\bar{3}}\;{\bar{6}}\;{\bf 2}\;, \\
   & \cdot\;\cdot\;\cdot\;\cdot\;{\bf 5}\;{ 1}\;{\bar{4}\;\bar{3}}\;{\bar{6}}\;{ 2}\;, \\
   & \cdot\;{\bf 7}\;\cdot\;\cdot\;{5}\;{ 1}\;{\bar{4}\;\bar{3}}\;{\bar{6}}\;{ 2}\;, \\
   & \cdot\;{7}\;{\bf 8\; 9}\;{5}\;{ 1}\;{\bar{4}\;\bar{3}}\;{\bar{6}}\;{ 2}\;, \\
  w(\triple) =\;& {\bf 10}\;{7}\;{ 8\; 9}\;{5}\;{ 1}\;{\bar{4}\;\bar{3}}\;{\bar{6}}\;{ 2}\;.
\end{align*}
Since $\aaa=2$, the $k_\aaa=3$ negative entries all appear in the first and second steps.  The partition $\lambda(\triple)$ is the one displayed in Figure~\ref{f.shape}, and a computation shows that $\ell(w(\triple)) = |\lambda(\triple)| = 50$.

It would be interesting to know more about the combinatorial properties of signed permutations arising this way.  For example, are they characterized by pattern avoidance?\footnote{Jordan Lambert Silva has recently found a pattern-avoidance criterion for these signed permutations \cite{jordan}.}
\end{rmk}

\begin{rmk}\label{r.kstrict}
When all $p_i=p$, then a $\rho$-strict partition $\lambda(\triple)$ is one so that all parts of size greater than $p-1$ are distinct; that is, it is a {\em $(p-1)$-strict partition} as defined in \cite{BKT1}.  Geometrically, all $E_{p_i}$ have rank $n+1-p$, so the locus comes from an isotropic Grassmannian.  Conversely, given a $(p-1)$-strict partition $\lambda$, following \cite{BKT1} define
\[
  P_j(\lambda) = n+p-1+j-\lambda_j-\#\big\{ i<j\,\big|\, \lambda_i+\lambda_j > 2p-2+j-i \big\}.
\]
If we define a type C triple by setting $k_j=j$, $p_j=p$, and
\begin{align*}
   q_j &= \begin{cases} n-P_j &\text{ when }P_j>n; \\ n+1-P_j &\text{ when }P_j\leq n,\end{cases}
\end{align*}
one recovers $\lambda=\lambda(\triple)$.  Indeed, one can check that $\rho(\triple)$ is given by
\begin{align*}
  \rho_j &= \#\big\{ i<j\,\big|\, q_i+q_j>0 \big\} \\
         &= \#\big\{ i<j\,\big|\, \lambda_i+\lambda_j>2p-2+j-i \big\} 
\end{align*}
in this situation.

Similarly, for a triple $\triple$ with all $p_i=p$ and $k_i=i$, the {\em characteristic index} $\chi$ used in \cite{IM} is given by $\chi_i = q_i-1$ for $i\leq a$, and $\chi_i=q_i$ for $i>a$.
\end{rmk}

\section{Type B: odd orthogonal bundles} \label{s.typeB}

A triple of type B is the same as in type C, as are the definitions of the sequence $\rho(\triple)$, the partition $\lambda(\triple)$, the raising operator $R^{(\rho,\ell)}$, and the theta-polynomial.

The geometry starts with a vector bundle $V$ of rank $2n+1$, equipped with a nondegenerate quadratic form.  We have two flags of subbundles,
\begin{align*}
  E_{p_1} \subset E_{p_2} \subset \cdots \subset E_{p_s} \subset V, \\
  F_{q_1} \subset F_{q_2} \subset \cdots \subset F_{q_s} \subset V;
\end{align*}
the isotropicity and ranks of these are exactly as in type C.  The degeneracy locus is
\[
  \Omega_\triple = \{ x\in X \,|\, \dim(E_{p_i} \cap F_{q_i}) \geq k_i \text{ for } 1\leq i\leq s \}.
\]

Let $M$ be the line bundle $\det V$, and note that
\[
  M \isom F^\perp/F,
\]
for any maximal isotropic $F\subset V$.  In fact, we have $M\isom \det( D^\perp/D )$ for any isotropic $D\subset V$.

Given a triple, let $\ell=k_s$ and $r=k_\aaa$, recalling that $\aaa$ is the index such that $q_{\aaa}>0>q_{\aaa+1}$.  For $i\leq \aaa$ let $c(k_i) = c(V-E_{p_i}-F_{q_i}-M)$, and for $i>\aaa$, let $c(k_i) = c(V-E_{p_i}-F_{q_i})$.  As before, when $k_{i-1}<k\leq k_i$, we set $c(k)=c(k_i)$.

\begin{thm}\label{t.typeB}
We have ${2^{r}}\,[\Omega_\triple] = \Theta^{(\rho(\triple))}_{\lambda(\triple)}(c(1),c(2),\ldots,c(\ell))$.
\end{thm}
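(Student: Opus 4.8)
The plan is to mirror the four-step structure of the type~C proof, inserting powers of $2$ exactly where the odd orthogonal geometry differs from the symplectic one. The only place a difference can enter is in the ``dominant case,'' since the basic case (zeros of a section of a bundle) and the projective-bundle pushforwards of the main case are formally identical in all classical types; so the strategy is to isolate a single factor of $2$ per isotropic step, accumulate $r$ of them, and then check that the inflation lemma is unaffected.

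First I would treat the \textbf{basic case}: $s=1$, $k_1=\ell=1$, $p_1=n$, so $E_n$ is a line subbundle of the rank-$2n+1$ quadratic bundle $V$, and $\Omega_\triple=\{E_n\subseteq F_{q_1}\}$ is the zero locus of a section of $(V/F_{q_1})\otimes E_n^*$. By Identity~\eqref{ident1} this gives $[\Omega_\triple]=c_{\lambda_1}(V-F_{q_1}-E_n)$ with $\lambda_1=\rk(V/F_{q_1})$, which is $n+q_1$ when $q_1>0$ (since an isotropic $F_q$ now has rank $n+1-q$ inside a rank-$2n+1$ space, so $V/F_q$ has rank $n+q$) and $n+q_1$ when $q_1<0$. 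No factor of $2$ appears yet; the $2$'s come only when one replaces a perp.

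Next, the \textbf{dominant case}, which is where the real work lies. Take $k_i=i$, $p_i=n+1-i$, write $D_i=E_{p_i}$, and build the flag $D_1\subset\cdots\subset D_s\subset V$ with $Z_j=\{D_j\subseteq F_{q_j}\}$. On $Z_{j-1}$ we have $D_j/D_{j-1}\subseteq (F_{q_j}\cap D_{j-1}^\perp)/D_{j-1}\subseteq D_{j-1}^\perp/D_{j-1}$; for $j<a$ ($F_{q_j}$ isotropic) this is again an instance of the basic case. The key point: for $D$ isotropic in the odd orthogonal $V$, the quadratic form identifies $D^\perp/D$ with a bundle carrying a nondegenerate quadratic form of \emph{odd} rank, and the relevant identification $D^\perp\cong(V/D)^*$ now introduces a discrepancy measured by $2$ --- concretely, when one rewrites $c(D_{\rho_j}^\perp/D_{j-1}-F_{q_j}/D_{j-1}-D_j/D_{j-1})$ in terms of $c(V-D_j-F_{q_j})$ and dual Chern classes of $D_{\rho_j}$, each isotropic step $j$ with $\rho_j=j-1$ (equivalently $1\le j\le r$, by the definition of $r$ and condition~\eqref{en.tripleCrho}) contributes a factor coming from $c_1$ of the ``extra'' line bundle obstructing the identification, and this is exactly where a $2$ is forced by the $\Z/2$ discrepancy between $D^\perp/D$ as a quadratic bundle and the corresponding symplectic picture. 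Accumulating one such factor for each of the $r$ genuinely isotropic steps (the steps $j\ge a$ with $q_j<0$ involve a coisotropic $F_{q_j}$ and, via \eqref{e.geo-rho}, reduce to a smaller perp with no new factor of $2$ beyond those already counted by $r=\max\{k_{a-1},q_a+k_a\}$), I get
\[
  2^{r}\,[\Omega_\triple] = R^{(\rho,s)}\,c(1)_{\lambda_1}\cdots c(s)_{\lambda_s}
  = \Theta^{(\rho)}_\lambda(c(1),\ldots,c(s)),
\]
with $c(j)=c(V-E_{p_j}-F_{q_j})$ and $\lambda_j$, $\rho_j$ exactly as in type~C. I would make the bookkeeping precise by writing the type~B analogue of \eqref{e.typeCt}, carrying the product $\prod_{i=1}^{\rho_j}(1+t_i)$ in the denominator but with the numerator adjusted so that the net extra scalar over the whole product is $2^{r}$, then invoking Identity~\eqref{ident2} to convert to the raising operator form.

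The \textbf{main case} ($k_i=i$ only) then goes through verbatim as in type~C: form the tower of projective bundles $X_s=\P(E_{p_s}/D_{s-1})\to\cdots\to X$, observe $\pi$ maps $\tilde\Omega=\{D_i\subseteq F_{q_i}\}$ birationally onto $\Omega_\triple$, apply the dominant case on $X_s$ (which now carries the factor $2^{r}$, since $r$ depends only on $\triple$, not on the base), and push forward using Identity~\eqref{ident4}, which commutes with $R^{(\rho,s)}$ and sends $\tilde c(i)_{\tilde\lambda_i}$ to $c(i)_{\lambda_i}$. This yields $2^{r}[\Omega_\triple]=\Theta^{(\rho)}_\lambda(c(1),\ldots,c(s))$. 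Finally the \textbf{general case} is handled exactly as before: inflate $\triple$ to $\triple'$ with $k'_i=i$ and $\Omega_{\triple'}=\Omega_\triple$, note $r(\triple')=r(\triple)$ (inflation inserts entries with $\rho_k=k-1$ only below position $r$, or entries with $\rho_{k_i}$ set by the perp rule, neither of which changes $\max\{k_{a-1},q_a+k_a\}$), and apply Lemma~\ref{l.inflate} of Appendix~\ref{ss.A.theta} to get $\Theta^{(\rho)}_\lambda(c'(1),\ldots,c'(\ell))=\Theta^{(\rho)}_\lambda(c(1),\ldots,c(\ell))$.

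The main obstacle I anticipate is pinning down \emph{precisely} which steps in the dominant case contribute a factor of $2$ and verifying the count is exactly $r$, not $a-1$ or $k_{a-1}$ or something else. The subtlety is that $r=\max\{k_{a-1},q_a+k_a\}$ can exceed $k_{a-1}$, in which case one has first inserted an auxiliary entry $(r,p_a,-q_a+1)$; one must check that after this insertion the dominant-case computation still produces exactly $2^{r}$, i.e.\ that the inserted (coisotropic) step does contribute a $2$ when $q_a+k_a>k_{a-1}$. Tracking the orthogonal-versus-symplectic discrepancy through the identification $D^\perp\cong(V/D)^*$ --- and confirming that for coisotropic $F_q$ the reduction \eqref{e.geo-rho} genuinely absorbs all but $r$ of the factors --- is the delicate part; everything else is a transcription of the type~C argument.
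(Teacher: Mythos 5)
Your accounting of where the factor $2^{r}$ arises is not correct, and the error is already visible in the basic case. You claim that for $s=1$, $p_1=n$, $q_1>0$, the locus $\Omega_\triple=\{E_n\subseteq F_{q_1}\}$ is the zero locus of a section of $(V/F_{q_1})\otimes E_n^*$, and hence $[\Omega_\triple]=c_{n+q_1}(V-F_{q_1}-E_n)$ with no factor of $2$. But this is false: when both $E_n$ and $F_{q_1}$ are isotropic inside the odd orthogonal bundle $V$ of rank $2n+1$, that section is never transverse. The codimension of $\Omega_\triple$ is $n+q_1-1$ (one less than the rank of the bundle), not $n+q_1$; note also that the paper's definition of $\lambda$ gives $\lambda_1=p_1+q_1+k_1-2=n+q_1-1$, not $\rk(V/F_{q_1})$. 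The correct statement, proved via the quadric-bundle calculation in Appendix~\ref{s.B}, is
\[
 2\,[\Omega_\triple]=c_{n+q_1-1}(V-F_{q_1}-E_n)\qquad(q_1>0),
\]
while only the coisotropic case $q_1<0$ behaves like the zero-locus argument with no factor of $2$. So the factor of $2$ in type B is produced \emph{by the basic case itself} whenever $q_1>0$, not by some ``discrepancy'' in identifying $D^\perp$ with $(V/D)^*$ (that identification holds without defect in every type). Since your dominant case is built by iterating the basic case, this error propagates: the isotropic steps $j<a$ each contribute a $2$ because each is an instance of the quadric-bundle computation, and the coisotropic steps $j\ge a$ contribute none because those are genuine zero-locus computations. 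The count $2^{a-1}=2^{r}$ (using $r=k_{a-1}=a-1$ in the dominant case, after the preliminary insertion when $r>k_{a-1}$) is then correct — but you reach it for the wrong reason, by misattributing the source of the $2$'s to a nonexistent $\Z/2$ phenomenon in the perp identification rather than to the quadric geometry. Without the quadric-bundle Proposition from Appendix~\ref{s.B}, your basic case has both the wrong degree and the wrong coefficient, and the rest of the argument cannot be repaired.

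Everything else in your outline — the tower of projective bundles, the pushforward via Identity~\eqref{ident4} commuting with $R^{(\rho,s)}$, the birationality, the inflation to $k_i'=i$ and Lemma~\ref{l.inflate} — matches the paper and is fine once the basic case is corrected.
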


The four steps of the proof are almost the same as in type C.  We will indicate the differences.

\subsection{Basic case}

Take $s=1$, $k_1=\ell=1$, and $p_1=n$, so $E_n$ is a line bundle and $\Omega_\triple$ is the locus where $E_n\subseteq F_{q_1}$.  When $q_1>0$, so $F_{q_1}$ is isotropic, the Proposition of Appendix~\ref{s.B} gives
\[
  2\,[\Omega_\triple] = c_{n+q_1-1}(V-F_{q_1}-E_n-M).
\]
On the other hand, when $q_1<0$, so $F_{q_1}$ is coisotropic, the locus is defined (scheme-theoretically) by the vanishing of $E_n \to V/F_{q_1}$, and
\[
  [\Omega_\triple] = c_{n+q_1}(V/F_{q_1}\otimes E_n^*) = c_{n+q_1}(V-F_{q_1}-E_n)
\]
in this case.

\subsection{Dominant case}
Now $k_i=i$ and $p_i=n+1-i$, for $1\leq i\leq s$, and write $D_i = E_{p_i}$.  As in type C, we have a filtration by $Z_j$, the locus where $D_i \subseteq F_{q_i}$ for all $i\leq j$, so that $Z_0=X$ and $Z_s=\Omega_\triple$.  When $j\leq \aaa$ (so $q_j>0$), the basic case says that
\[
  2\,[Z_j] = [Z_{j-1}]\cdot c_{\lambda_j}(D_{j-1}^\perp/D_{j-1}-F_{q_j}/D_{j-1}-D_j/D_{j-1} -M);
\]
and when $j> \aaa$ (so $q_j<0$),
\[
  [Z_j] = [Z_{j-1}]\cdot c_{\lambda_j}(D_{\rho_j}^\perp/D_{j-1}-F_{q_j}/D_{j-1}-D_j/D_{j-1});
\]
where in each case $\rho_j$ and $\lambda_j$ is defined as in type C.  
We therefore have
\begin{align*}
 2^{\aaa}\, [\Omega_\triple] & = \left( \prod_{j=1}^\aaa c_{\lambda_j}(D_{\rho_j}^\perp/D_{j-1} - F_{q_j}/D_{j-1} - D_j/D_{j-1} - M) \right) \\
   & \qquad \times \left( \prod_{j=\aaa+1}^s c_{\lambda_j}(D_{\rho_j}^\perp/D_{j-1} - F_{q_j}/D_{j-1} - D_j/D_{j-1}) \right),
\end{align*}
as before.

\medskip

The rest of the proof proceeds exactly as in type C. \qed

\bigskip

As in type C, we recover a Pfaffian formula for vexillary signed permutations.

\begin{cor*}
If all $q_i>0$, then
\[
  2^\ell\, [\Omega_\triple] = \Pf_\lambda( c(1), \ldots, c(\ell) ).
\]
\end{cor*}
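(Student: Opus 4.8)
The plan is to reduce this corollary to Theorem~\ref{t.typeB} exactly as the type C corollary was reduced to Theorem~\ref{t.typeC}, tracking the extra powers of $2$. When all $q_i>0$, we have $a=s+1$, so $r=r(\triple)=k_{a-1}=k_s=\ell$ and $\rho_j=j-1$ for all $j$; hence the operator $R^{(\rho,\ell)}$ is the ``Pfaffian'' raising operator $\prod_{i<j}(1-R_{ij})(1+R_{ij})^{-1} = \prod_{i<j}\frac{1-R_{ij}}{1+R_{ij}}$, and $\Theta^{(\rho)}_\lambda$ becomes the Schur Pfaffian $\Pf_\lambda$. Theorem~\ref{t.typeB} with this value of $r$ gives $2^\ell\,[\Omega_\triple] = \Theta^{(\rho(\triple))}_{\lambda(\triple)}(c(1),\ldots,c(\ell))$, so the content of the corollary is the identification of this particular theta-polynomial with the Pfaffian of the stated matrix $(m_{ij})$.

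First I would invoke the Proposition of Appendix~\ref{ss.A.raising} (the same result cited in the type C corollary), which expresses $R^{(\rho,\ell)}\cdot(c(1)_{\lambda_1}\cdots c(\ell)_{\lambda_\ell})$, for $\rho_j = j-1$, as a Pfaffian: namely the $(1+R_{ij})^{-1} = \sum_{k\geq 0}(-R_{ij})^k$ factors, expanded and combined with $(1-R_{ij})$, produce exactly the pairwise entries $m_{ij} = c(i)_{\lambda_i}c(j)_{\lambda_j} + 2\sum_{k>0}(-1)^k c(i)_{\lambda_i+k}c(j)_{\lambda_j-k}$, and when $\ell$ is odd the bordering convention $m_{0j}=c(j)_{\lambda_j}$ is introduced as recalled in Appendix~\ref{ss.A.pf}. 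This is purely the statement of that Proposition applied to the symbols $c(i)$ with the shift by $\lambda_i$; no geometry is involved at this stage. Combining with the displayed identity from Theorem~\ref{t.typeB} then yields $2^\ell\,[\Omega_\triple] = \Pf_\lambda(c(1),\ldots,c(\ell))$.

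I expect the only real point requiring care is bookkeeping: verifying that $a=s+1$ forces $r=\ell$ (so that the power of $2$ in Theorem~\ref{t.typeB} is precisely $2^\ell$, matching the left-hand side of the corollary), and checking that $\rho_j=j-1$ makes $R^{(\rho,\ell)}$ coincide with the operator appearing in the Appendix Proposition. Both are immediate from the definitions in \S\ref{s.typeC}: when all $q_i>0$ the index $a$ is $s+1$, the ``replace $\triple$'' step is vacuous since $r=k_{a-1}$ already, and $\rho_k=k-1$ for all $k\leq r=\ell$. The remaining algebraic identity --- that the raising-operator expression equals the bordered Pfaffian --- is exactly the Appendix result, so there is nothing new to prove. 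The structure of the argument thus mirrors the type C corollary verbatim, with the sole modification that each of the $\ell$ isotropic steps in the dominant case contributed a factor of $2$ (as in the type B dominant case), accounting for the $2^\ell$ on the left-hand side rather than the $1$ of type C.
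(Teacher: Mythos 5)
Your proposal is correct and follows the same route as the paper: the paper dispatches this corollary by analogy with the type~C case, i.e.\ by combining Theorem~\ref{t.typeB} (with $r=\ell$) with the Proposition of Appendix~\ref{ss.A.raising}. Your filled-in bookkeeping (that $a=s+1$ forces $r=k_s=\ell$, that $\rho_j=j-1$ for all $j\leq\ell$ so $R^{(\rho,\ell)}$ collapses to $\prod_{i<j}\frac{1-R_{ij}}{1+R_{ij}}$, and that the power of $2$ tracks the $\ell$ isotropic steps) is exactly what the paper leaves implicit.
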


\section{Type D: even orthogonal bundles} \label{s.typeD}

A \define{triple of type D} is $\triple=(\bk,\bp,\bq)$, with 
\begin{align*}
 0<&k_1<k_2<\cdots<k_s , \\
 &p_1\geq p_2 \geq \cdots \geq p_s \geq 0, \\
 &q_1 \geq q_2 \geq \cdots \geq q_s.
\end{align*}
The value $q=-1$ is prohibited, and if $p_s=0$, then all $q_i\geq 0$.  Set $\aaa=\aaa(\triple)$ to be the integer such that $q_{\aaa}> -1>q_{\aaa+1}$.

\setcounter{tempc}{\value{equation}}
\setcounter{equation}{0}
\renewcommand{\theequation}{d\arabic{equation}}
A quick way to characterize the further requirements on a type D triple is as follows.  Form $\triple^+$ by replacing each $p_i$ in $\triple$ with $p_i+1$, and replacing each $q_i\geq 0$ in $\triple$ with $q_i+1$; then a type D triple is one such that $\triple^+$ is a type C triple, but with the extra requirement that $\triple^+$ must satisfy \eqref{e.tripleC3} with strict inequality.

To be completely clear, we will spell out the conditions.  Their geometric explanations are analogous to those for type C.  For $i\leq \aaa$, 
\begin{align}
  k_{i}-k_{i-1} &\leq (p_{i-1}-p_{i}) + (q_{i-1}-q_{i}), 
  \label{e.tripleD1}
\end{align}
and as before, this is the only condition when all $q_i$ are nonnegative.

For each $i\leq \aaa$, let $m(i) =  \min\{ m \,|\, q_i+(k_i-k_{i-1})\geq q_{m}\}$.  The negative values
\begin{equation}\label{e.prohibitD2}
  -q_i-1,\, -q_i-2,\, \ldots,\, -q_i-(k_i-k_{m(i)-1})
\end{equation}
are prohibited as values of $q_j$ for $j> \aaa$.

The sequence $\rho(\triple)$ is defined similarly as in type C.  Set $\rho_k=k-1$ for $k\leq k_\aaa$.  For $i> \aaa$, set $\rho_{k_i}=k_j$, where $j$ is the index such that $q_j\geq-q_i>q_{j+1}+1$.  Then fill in the other parts minimally subject to $\rho_{k_\aaa+1}\geq \cdots \geq \rho_{k_s}\geq 0$.  We require
\begin{equation}\label{e.tripleD3}
  k_i - \rho_{k_i} < -q_i
\end{equation}
for all $i>\aaa$.

Finally, for $i>\aaa+1$,
\begin{equation}\label{e.tripleD4}
 (k_i-k_{i-1}) + (\rho_{k_{i-1}}-\rho_{k_i}) \leq (p_{i-1}-p_i) + (q_{i-1}-q_i),
\end{equation}
with equality implying either $\rho_{k_{i-1}}=\rho_{k_i}$ or $k_i=k_{i-1}+1$, 
and
\begin{equation}\label{e.tripleD5}
  k_s \geq -p_s-q_s+\rho_{k_s}.
\end{equation}

\renewcommand{\theequation}{\arabic{equation}}
\setcounter{equation}{\value{tempc}}


The associated partition $\lambda(\triple)$ is defined by
\begin{align*}
 \lambda_{k_i} = \begin{cases} p_i+q_i  & \text{if } i\leq \aaa; \\ p_i+q_i+k_i-\rho_{k_i}  &\text{if } i> \aaa;\end{cases}
\end{align*}
 filling in the other parts minimally subject to
\begin{align*}
 \lambda_1 > \cdots > \lambda_{k_\aaa} \geq \lambda_{k_\aaa+1} \geq \cdots \geq \lambda_\ell \geq 0,
\label{en.tripleDla}
\end{align*}
where $\ell=k_s$.  
That is, $\lambda(\triple)$ is a $\rho(\triple)$-strict partition.\footnote{In the case where $k_i=i$ and $p_i=p$ for all $i$, the $\rho$-strict partition $\lambda(\triple)$ constructed from a type D triple is $p$-strict in the sense of \cite{BKT2}.  The ``type'' defined in [{\it op.~cit.}] arises from geometry.  Fix a maximal isotropic bundle $E$ containing the bundle $E_p$.  When no part of $\lambda$ is equal to $p$, the type is $0$.  If some part of $\lambda$ is equal to $p$, then some $q_i=0$, and in this case, the type is defined to be $1$ or $2$ depending on whether $n+\dim(E\cap F_0)$ is odd or even.  See the remark at the end of this section.}  (In contrast to type C, we allow $\lambda_{k_\aaa}=\lambda_{k_\aaa+1}$.)

A key difference in type D is that $\lambda(\triple)$ may have a part equal to $0$, and this is included in the data.  (For example, this happens when $p_s=q_s=0$; in this case, the total number of parts determines the dimension of the intersection of two maximal isotropic subspaces.)

The raising operators and polynomials require some setup; details are explained in Appendix~\ref{s.A}.  We will have elements $c(i) = d(i)+e(i)$, so that $c(i)_k = d(i)_k+e(i)_k$.  We will also have operators $\delta_i$, which acts on a monomial $c(1)_{\alpha_1} \cdots c(\ell)_{\alpha_\ell}$ by replacing $c(i)_{\alpha_i}$ with $d(i)_{\alpha_i}$; that is, $\delta_i$ sends $e(i)$ to zero and leaves everything else unchanged.

Given integers $0\leq r \leq \ell$, and a unimodal sequence of nonnegative integers $\rho = (\rho_1,\ldots,\rho_\ell)$ with $\rho_j<j$ and $\rho_j=j-1$ for $1\leq j\leq r$, we define the raising operator
\[
  \tilde{R}^{(\rho,\ell)} =  \left(\prod_{1\leq i<j\leq \ell} (1-\tilde{R}_{ij}) \right)\left(\prod_{1\leq i\leq \rho_j < j \leq \ell} (1+\tilde{R}_{ij})^{-1}\right) ,
\]
where $\tilde{R}_{ij} = \delta_i\delta_j R_{ij}$.  Using these, the \define{eta-polynomial} for a $\rho$-strict partition is defined as
\[
  \Eta^{(\rho)}_\lambda( c(1),\ldots,c(\ell) ) = \tilde{R}^{(\rho,\ell)}\cdot (c(1)_{\lambda_1}\cdots c(\ell)_{\lambda_\ell}).
\]

Here is the geometry.  We have a vector bundle $V$ of rank $2n$, equipped with a nondegenerate quadratic form taking values in the trivial bundle.  There are flags of subbundles,
\begin{align*}
  E_{p_1} \subset E_{p_2} \subset \cdots \subset E_{p_s} \subset V, \\
  F_{q_1} \subset F_{q_2} \subset \cdots \subset F_{q_s} \subset V;
\end{align*}
each $E_p$ has rank $n-p$ and is isotropic; and each $F_q$ has rank $n-q$, and is isotropic when $q\geq 0$ and coisotropic when $q<0$.  Note that $F_q^\perp = F_{-q}$.

The degeneracy locus is defined as before, by
\[
  \Omega_\triple = \{ x\in X \,|\, \dim(E_{p_i} \cap F_{q_i}) \geq k_i \text{ for } 1\leq i\leq s \}.
\]
The usual type D caveat applies: this acquires its scheme structure via pullback from a Schubert bundle in a flag bundle, and even there, it must be taken to mean the closure of the locus where equality holds (see, e.g., \cite[\S6]{FP} or \cite[\S6.3.2]{T2}).

Now given a type D triple, set $\ell=k_s$ and $r=k_\aaa$.  Let $d(k_i)=c(V-E_{p_i}-F_{q_i})$, and for $i\leq \aaa$ (so $q_i\geq 0$), set $e(k_i) = e(E_{p_i},F_{q_i})$, where the latter is defined as
\[
 e(E_{p_i},F_{q_i}) := (-1)^{\dim(E\cap F)}\,c(E/E_{p_i}+F/F_{q_i}),
\]
for some maximal isotropic bundles $E\supseteq E_{p_i}$ and $F\supseteq F_{q_i}$.  (Only the Euler class $e_{p_i+q_i}(E_{p_i},F_{q_i})$ appears in our formulas, and this is independent of the choice of such maximal $E$ and $F$.)  When $i> \aaa$, we set $e(k_i)=0$; and as usual, when $k_{i-1}<k\leq k_i$, we set $d(k)=d(k_i)$ and $e(k)=e(k_i)$.  Finally, let
\[
  c(k) = d(k) + (-1)^{k}e(k).
\]

\begin{thm}\label{t.typeD}
We have ${2^{r}}\, [\Omega_\triple] = \Eta^{(\rho(\triple))}_{\lambda(\triple)}(c(1),c(2),\ldots,c(\ell))$.
\end{thm}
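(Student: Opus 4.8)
The plan is to follow the four-step pattern already established in types A, C, and B, making the modifications forced by the even orthogonal geometry; the subtlety in type D is that the quadratic form on $V$ distinguishes the two rulings of maximal isotropic subspaces, which is exactly what the Euler-class correction terms $e(i)$ and the signs $s(i)=(-1)^i$ are designed to track. So first I would reduce, as in the other types, to the \emph{basic case}: $s=1$, $k_1=\ell=1$, $p_1=n-1$ (so $E_{p_1}$ is a line bundle after the rank convention), and $\Omega_\triple=\{E_{p_1}\subseteq F_{q_1}\}$. When $q_1<0$ the bundle $F_{q_1}$ is coisotropic and the locus is cut out by the vanishing of $E_{p_1}\to V/F_{q_1}$, so $[\Omega_\triple]=c_{\lambda_1}(V-F_{q_1}-E_{p_1})$ with $\lambda_1=\operatorname{rk}(V/F_{q_1})$; here $e=0$ so $c(1)_{\lambda_1}$ is just this Chern class. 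When $q_1\geq 0$, $F_{q_1}$ is isotropic, and I would invoke the type D analogue of the Appendix~\ref{s.B} computation: the locus of a line bundle mapping into an isotropic bundle in an even orthogonal $V$ carries a factor of $2^r$ with $r=1$ here, and the class is $\tfrac12\bigl(d(1)_{\lambda_1}+(-1)^{\lambda_1}e(1)_{\lambda_1}\bigr)=\tfrac12 c(1)_{\lambda_1}$. The Euler-class term $e(E_{p_1},F_{q_1})$ and its sign come precisely from the component of the isotropic Grassmannian of $V$ in which the two rulings interact; this is the one genuinely new input and where I expect the main difficulty, since one must verify the sign $(-1)^{\dim(E\cap F)}$ makes the expression independent of the chosen maximal isotropic extensions $E\supseteq E_{p_1}$, $F\supseteq F_{q_1}$ (as asserted in the text) and that it is compatible with the raising-operator bookkeeping via the $\delta_i$'s.

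Next, the \emph{dominant case}: $k_i=i$, $p_i=n-i$, so $D_i:=E_{p_i}$ has rank $i$ and $D_1\subset\cdots\subset D_s\subset V$. Imposing the conditions $D_j\subseteq F_{q_j}$ one at a time and using the geometric identity \eqref{e.geo-rho} (valid for symmetric forms, as noted in \S\ref{s.typeC}) to rewrite $D_{j-1}^\perp/(F_{q_j}\cap D_{j-1}^\perp)=D_{\rho_j}^\perp/F_{q_j}$, each step is an instance of the basic case. For $j<a$ (so $q_j\geq0$, $F_{q_j}$ isotropic) we pick up a factor of $2$ and a $d+(-1)^{(\cdot)}e$ pair; for $j\geq a$ (so $q_j<0$, coisotropic) we pick up neither. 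Multiplying, $2^{a-1}[\Omega_\triple]$ equals a product of classes $c_{\lambda_j}(D_{\rho_j}^\perp/D_{j-1}-F_{q_j}/D_{j-1}-D_j/D_{j-1})$ with the appropriate $e$-corrections in the slots $j<a$. Using the quadratic form to identify $D_{\rho_j}^\perp\cong(V/D_{\rho_j})^*$, Identity~\eqref{ident3}, and then Identity~\eqref{ident2} with $t_i=-c_1(D_i/D_{i-1})$, this product collapses to $\tilde R^{(\rho,r,s)}\cdot(c(1)_{\lambda_1}\cdots c(s)_{\lambda_s})=\Eta^{(\rho)}_\lambda(c(1),\ldots,c(s))$; the factors $(1-\delta_i\delta_j R_{ij})/(1+\delta_i\delta_j R_{ij})$ for $i<j\leq r$ in $\tilde R$ arise exactly because multiplying two $d+(-1)e$ terms produces a cross term, and the $\delta$'s record that $e(i)^2$-type products must be discarded — this is the algebraic shadow of the fact that $e(E_{p_i},F_{q_i})e(E_{p_j},F_{q_j})$ has no geometric meaning when both slots are ``negative-ruling'' and must be set to zero. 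Verifying this algebraic reduction — that the literal product of the $d(j)+(-1)^{\lambda_j}e(j)$ factors equals $\tilde R^{(\rho,r,s)}$ applied to the monomial — is the second place where care is needed; I would lean on the relevant lemma in Appendix~\ref{s.A}, cited by the definition of the eta-polynomial.

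The \emph{main case} ($k_i=i$ only, $p_i$ arbitrary) then follows formally exactly as in types A and C: form the tower of projective bundles $X_s=\P(E_{p_s}/D_{s-1})\to\cdots\to\P(E_{p_1})\to X$, let $D_j/D_{j-1}$ be the tautological subbundle, and set $\tilde c(j)=c(V-D_j-F_{q_j})$ (with $\tilde e(j)$ computed on $X_s$). The locus $\tilde\Omega=\{D_i\subseteq F_{q_i}\}$ is a dominant locus, so $2^r[\tilde\Omega]=\tilde R^{(\rho,r,s)}\cdot(\tilde c(1)_{\tilde\lambda_1}\cdots)$, and $\pi\colon X_s\to X$ maps $\tilde\Omega$ birationally onto $\Omega_\triple$ (take $D_i=\ker(E_{p_i}\to F_{q_i})$ over the locus of equality). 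Pushing forward: Identity~\eqref{ident4} gives $\pi^{(i)}_*\tilde c(i)_{\tilde\lambda_i}=c(i)_{\lambda_i}$, and one must check $\pi^{(i)}_*\tilde e(i)_{\tilde\lambda_i}=e(i)_{\lambda_i}$ as well — this is routine since $e(E_{p_i},F_{q_i})$ is a Chern class of bundles pulled back appropriately, combined with the tautological-quotient pushforward. By linearity of the raising operator (which commutes with $\pi_*$ since its coefficients are pulled back from $X$), $2^r[\Omega_\triple]=\tilde R^{(\rho,r,s)}\cdot(c(1)_{\lambda_1}\cdots c(s)_{\lambda_s})=\Eta^{(\rho)}_\lambda(c(1),\ldots,c(s))$. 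Finally, the \emph{general case}: inflate $\triple$ to $\triple'$ with $k'_i=i$ and $\Omega_{\triple'}=\Omega_\triple$ (possibly after passing to a projective bundle to create the needed intermediate $F$'s, over which the map is birational), and invoke the inflation lemma — Lemma~\ref{l.inflate} together with its eta-polynomial counterpart in Appendix~\ref{ss.A.theta} — to conclude $\Eta^{(\rho)}_\lambda(c'(1),\ldots,c'(\ell))=\Eta^{(\rho)}_\lambda(c(1),\ldots,c(\ell))$. The overall structure is thus entirely parallel to types B and C; the only genuinely type-D obstacles are (i) the basic-case Euler-class computation in even orthogonal $V$, with its sign and well-definedness, and (ii) checking that the $\delta_i$-decorated raising operator $\tilde R^{(\rho,r,\ell)}$ correctly encodes the product of the $d+(-1)e$ factors in the dominant case.
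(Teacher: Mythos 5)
Your proposal follows the paper's four-step proof essentially verbatim: the basic case via the Appendix~\ref{s.B} quadric-bundle Proposition (split according to $q_1\geq 0$ or $q_1<-1$), the dominant case via the one-condition-at-a-time product together with the identity $D_{j-1}^\perp/(F_{q_j}\cap D_{j-1}^\perp)=D_{\rho_j}^\perp/F_{q_j}$ and the raising-operator algebra of Appendix~\ref{s.A}, the main case via the projective-bundle tower and Identity~\eqref{ident4}, and the general case via the inflation Lemma~\ref{l.inflate2}. Two small points worth flagging: the sign in $c(i)=d(i)+s(i)e(i)$ is $s(i)=(-1)^i$, depending on the \emph{index} $i$, so where you wrote $(-1)^{\lambda_1}$ and $(-1)^{\lambda_j}$ you should have $(-1)^1$ and $(-1)^j$; and your claim $\pi^{(i)}_*\tilde e(i)_{\tilde\lambda_i}=e(i)_{\lambda_i}$ is correct but deserves the brief verification that, for a compatible choice of maximal isotropic $E\supseteq E_{p_i}\supseteq D_i$, one has $\tilde e(i)=c_{n-p_i-i}(E_{p_i}/D_i)\cdot e(E_{p_i},F_{q_i})$, so that Identity~\eqref{ident4} applies directly to the tautological quotient $E_{p_i}/D_i$.
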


Most of the proof proceeds exactly as in type B.  We will go through the outline briefly, to point out the differences.

\subsection{Basic case}

Here $s=1$, $k_1=\ell=1$, and $p_1=n-1$, so $E_{n-1}$ is a line bundle and $\Omega_\triple$ is the locus where $E_{n-1}\subseteq F_{q_1}$.  Just as before, we have
\begin{align*}
  2\,[\Omega_\triple] &= c_{n+q_1-1}(V-F_{q_1}-E_{n-1}) - e_{n+q_1-1}(E_{n-1},F_{q_1}) &\text{when } q_1\geq 0; \\
  [\Omega_\triple] &= c_{n+q_1}(V-F_{q_1}-E_{n-1})  &\text{when } q_1 <-1.
\end{align*}
The proof is the same as in type B.  

\subsection{Dominant case}

Now $k_i=i$ for $1\leq i\leq s$, and $p_i=n-i$, so $D_i = E_{n-i}$ has rank $i$.  Let $Z_j$ be the locus where $D_j \subseteq F_{q_j}$, so $Z_0=X$ and $Z_s=\Omega_\triple$.  When $j\leq \aaa$, applying the basic case with $V$ replaced by $D_{j-1}^\perp/D_{j-1}$, we obtain
\begin{align*}
  2\, [Z_j] &= [Z_{j-1}]\cdot \left( c_{\lambda_j}( D_{j-1}^\perp/D_{j-1} - F_{q_j}/D_{j-1} - D_j/D_{j-1} )\right. \\
  & \qquad \left. - e_{\lambda_j}( D_j/D_{j-1}, F_{q_j}/D_{j-1} ) \right) \\
          &= [Z_{j-1}] \cdot \left( c_{\lambda_j}( D_{j-1}^\perp/D_{j-1} - F_{q_j}/D_{j-1} - D_j/D_{j-1} ) \right. \\
          & \qquad \left. + (-1)^{j} e_{\lambda_j}(D_j,F_{q_j})\right) ,
\end{align*}
where $\lambda_j = n-j+q_j$.

When $j> \aaa$, using $D_{j-1}^\perp/(F_{q_j}\cap D_{j-1}^\perp) = D_{\rho_j}^\perp/F_{q_j}$ (by \eqref{e.geo-rho} as in type C), the locus is given by the vanishing of $D_j/D_{j-1} \to D_{\rho_j}^\perp/F_{q_j}$, so the basic case says
\[
  [Z_j] = [Z_{j-1}] \cdot c_{\lambda_j}( D_{\rho_j}^\perp/D_{j-1} - F_{q_j}/D_{j-1} - D_j/D_{j-1} ),
\]
where $\rho_j$ and $\lambda_j$ are defined as above. 
(Here, the strict inequality \eqref{e.tripleD3} ensures that $q<-1$ in this application of the basic case.)

It follows that
\begin{align*}
  2^{\aaa}\, [\Omega_\triple] & = \left( \prod_{j=1}^{\aaa} \left( c_{\lambda_j}(D_{\rho_j}^\perp/D_{j-1} - F_{q_j}/D_{j-1} - D_j/D_{j-1}) +(-1)^{j}e_{\lambda_j}(D_j,F_{q_j}) \right) \right)  \\
   & \qquad \times \left( \prod_{j=\aaa+1}^s c_{\lambda_j}(D_{\rho_j}^\perp/D_{j-1} - F_{q_j}/D_{j-1} - D_j/D_{j-1}) \right).
\end{align*}
This leads to
\begin{equation}\label{e.typeDop}
  2^{\aaa}\, [\Omega_\triple] = \tilde{R}^{(\rho,s)} c(1)_{\lambda_1} \cdots c(s)_{\lambda_s},
\end{equation}
with $c(j) = d(j) + (-1)^{j} e(j)$ for $j\leq \aaa$ and $c(j) = d(j)$ for $j> \aaa$, as defined above.  The deduction of \eqref{e.typeDop} differs slightly from the previous cases (e.g., deducing \eqref{e.typeCop} from \eqref{e.typeCt}): one must verify
\begin{align*}
 & t_i^b\cdot \left( c_{\lambda_i}(D_{i-1}^\perp/D_{i-1} - F_{q_i}/D_{i-1} - D_i/D_{i-1}) +(-1)^{i}e_{\lambda_i}(D_i,F_{q_i}) \right) \\
   &\hspace{3in} = c_{\lambda_i+b}(D_{i-1}^\perp/D_{i-1} - F_{q_i}/D_{i-1} - D_i/D_{i-1})
\end{align*}
for $i\leq \aaa$, where $t_i=-c_1(D_i/D_{i-1})$, since this shows that $t_i$ acts as $\delta_iT_i$ on the factor $c(i)_{\lambda_i}$.  To do this, by replacing $D_{i-1}^\perp/D_{i-1}$ with $V$ one may reduce to the case $i=1$; one also reduces to the case $b=1$ by applying Identity~\eqref{ident2}.  In this case, we compute
\begin{align*}
 &t_1\cdot \left( c_{n-1+q_1}(V-F_{q_1}-D_1) - e_{n-1+q_1}(D_1,F_{q_1}) \right) \\
 &\qquad= \sum_{k=1}^{n+q_1} t_1^k\, c_{n+q_1-k}(V/F_{q_1}) 
   + (-1)^{\dim(E\cap F)}c_{n+q_1}(E+F/F_{q_1}) \\
 &\qquad = c_{n+q_1}(V-D_1-F_{q_1}) - c_{n+q_1}(V/F_{q_1}) 
  + (-1)^{\dim(E\cap F)}c_{n}(E)\, c_{q_1}(F/F_{q_1}) \\
 &\qquad = c_{n+q_1}(V-D_1-F_{q_1})
   + c_{q_1}(F/F_{q_1})\big( (-1)^{\dim(E\cap F)}c_{n}(E) - c_n(F^*)\big),
\end{align*}
and apply the relation $(-1)^{\dim(E\cap F)}c_{n}(E) = c_n(F^*)$, due to Edidin and Graham \cite{EG}.

\smallskip

The remainder of the proof proceeds as in the other types.  To deduce the general case, one needs to apply \S\ref{ss.A.theta}, Lemma~\ref{l.inflate2}.  The relations in the hypothesis of that lemma require that
\[
 (d(k)_{m}-(-1)^\ell e(k)_{m})( d(k)_{m}+(-1)^\ell e(k)_{m} ) + 2\sum_{j>0} (-1)^j d(k)_{m+j}\, d(k)_{m-j}
\]
vanish for all $m\geq \lambda_k$.  This expression may be re-written as
\begin{align*}
 & \sum_{j=-\infty}^\infty (-1)^j d(k)_{m+j} d(k)_{m-j} 
  - (e(k)_{m})^2 \\
 &\quad = (-1)^{m}\,c_{2m}( E_{p_i}^\perp/E_{p_i} + (F_{q_i}^\perp/F_{q_i})^* ) \\ &\qquad - c_{m}( E/E_{p_i} + F/F_{q_i})^2,
\end{align*}
which vanishes if $m>\lambda_k=p_i+q_i$, since this is the rank of $E/E_{p_i}+F/F_{q_i}$.  When $m=\lambda_k$, we have $(-1)^{\lambda_k}\,c_{2\lambda_k}( E_{p_i}^\perp/E_{p_i} + (F_{q_i}^\perp/F_{q_i})^* ) = c_{\lambda_k}( E/E_{p_i}+F/F_{q_i} )^2$, so the relation holds in this case as well.\qed

\medskip

To extract a Pfaffian from the case where $\aaa=s$, so all $q_i\geq0$, one needs a little algebra, given by the Theorem of \S\ref{ss.A.raising}.  Applying this proves the following:

\begin{cor*}
If all $q_i\geq0$, then
\begin{align}\label{e.corD}
  2^\ell\, [\Omega_\triple] = \Pf_\lambda( c(1), \ldots, c(\ell) ).
\end{align}
\end{cor*}
\noindent
Unpacking the definition of the $c(i)$'s, the right-hand side is the Pfaffian of the matrix $(m_{ij})$, with
\begin{align*}\label{e.mtxD}
  m_{ij} &= (d(i)_{\lambda_i}-(-1)^\ell e(i)_{\lambda_i})( d(j)_{\lambda_j}+(-1)^\ell e(j)_{\lambda_j}) \\ &\qquad + 2\sum_{t>0} (-1)^t d(i)_{\lambda_i+t}\, d(j)_{\lambda_j-t},
\end{align*}
for $1\leq i<j\leq \ell$, and $m_{0j}=d(j)_{\lambda_j}+e(j)_{\lambda_j}$ for $0< j\leq \ell$ if $\ell$ is odd.

\begin{rmk*}
A Schubert variety in an orthogonal Grassmannian $OG(n-p,2n)$ is defined by a triple $\triple$ with all $p_i=p$.  To obtain these as degeneracy loci according to our setup, one should use two different maximal isotropic spaces in the reference flag $F_\bullet$.  In fact, given a complete isotropic flag, there is a unique maximal isotropic subspace $F_1\subset F_0' \subset F_{\bar{1}}$ which is distinct from $F_0$; both $F_0$ and $F_0'$ must be used to define Schubert varieties.

For example, consider $\triple = ( 1,\, p,\, 0 )$, so $\lambda = (p)$.  In the setup of \cite{BKT2}, there are two Schubert varieties whose $p$-strict partition is  $\lambda$, given by $\dim(E_p\cap F_0)\geq 1$ and $\dim(E_p\cap F_0')\geq 1$, respectively.  (When $p=n-1$, these are the two maximal linear spaces in the quadric.)  The respective formulas are $c_p(V-E_p-F_0)-e_p(E_p,F_0)$ and $c_p(V-E_p-F_0')-e_p(E_p,F_0')$.  Note that $e_p(E_p,F_0') = -e_p(E_p,F_0)$.  (Compare \cite[Example~A.3]{BKT2}.)

The strict inequality of \eqref{e.tripleD3} excludes some Grassmannian Schubert loci, however.  For example, the $2$-strict partition $\lambda = (4,2,2)$ should correspond to a triple $(\,1\,2\,3\,,\, 2\,2\,2 \,, \, 2\, 0\, \bar{2}\,)$, but this has $k_3-\rho_{k_3}=2=-q_3$, violating \eqref{e.tripleD3}.
\end{rmk*}

\appendix
\section{Algebra of Pfaffians and raising operators}\label{s.A}

\addtocontents{toc}{\setcounter{tocdepth}{2}}

\renewcommand{\thesubsection}{A.\arabic{subsection}}
\renewcommand{\theequation}{A.\arabic{equation}}
\setcounter{thm}{0}
\setcounter{equation}{0}

\subsection{A Pfaffian identity}\label{ss.A.pf}

Given $a_{ij}$ in a commutative ring $A$, for $1 \leq i < j \leq n$, and  $n$  even, we denote by $\Pf(a_{ij})$ the Pfaffian of the skew-symmetric matrix $(a_{ij})$ with entries  $a_{ij}$ for $i < j$, and $a_{i i} = 0$ and $a_{ij} = - a_{ji}$  for  $i > j$.  That is, for $n = 2m$, 
\begin{equation}\label{PfaffianDef}
\Pf(a_{ij}) = \sum \pm a_{i_1 j_1} a_{i_2 j_2} \dots a_{i_m j_m} ,
\end{equation}
the sum over all permutations  $i_1 j_1 i_2 j_2 \dots i_m j_m$ of $1 2 \dots n$, with  $i_1 < i_2 < \dots < i_m$ and $i_r < j_r$ for all $r$, the sign being the sign of the permutation.  The same notation is used whenever $1 2 \dots n$ is replaced by any set consisting of an even number of integers in increasing order.  For example, the expansion along the first row can be written 
\[
\Pf(a_{ij})  =  \sum_{k = 2}^n (-1)^{k-1}   a_{1 k}  \Pf(a_{ij})_{\hat{1},\hat{k}},
\]
where the hats denote that the integers are taken from the first $n-2$ positive integers, omitting $1$ and $k$.  

We will often want formulas for odd as well as even $n$.  For this, when $n$ is odd, we will use the integers from $0$ to $n$.  In addition to the $a_{ij}$ for $1 \leq i < j \leq n$ we also need to specify $a_{0j}$  for $1 \leq j \leq n$.  Then the Pfaffian is given by the identity
\[  
\Pf(a_{ij})  =  \sum_{k = 1}^n (-1)^{k-1}  a_{0 k}  \Pf(a_{ij})_{\hat{k}},
\]
with $i < j$ taken from positive integers not equal to $k$.

Assume now that the ring $A$ contains elements $\delta_1, \ldots , \delta_n$  satisfying $\delta_i^2 = \delta_i$ for all $i$.  
Set $\epsilon_i = 2 \delta_i - 1$, so $\epsilon_i^2 = 1$ and $\epsilon_i \delta_i = \delta_i$ for all $i$.  (The classical case is when $\delta_i = 1$, so $\epsilon_i = 1$, for all $i$.)  Set $\delta_0 = 1$.

Let $T_1, \ldots, T_n$ be indeterminates, and let $B$ be the localization of $A[T_1, \ldots, T_n]$ at the multiplicative set of non-zero-divisors (which includes all $T_j$, and all $T_j - \delta_i\delta_jT_i$ for all $i < j$).  Set, for $1 \leq i < j \leq n$,
\[
H_{ij}  =  \frac{T_j - \delta_i\delta_jT_i}{T_j + \delta_i\delta_jT_i}.
\]
Also set $T_0 = 0$, and $H_{0j} = 1$ for $1 \leq j \leq n$.  

Our goal is to write the product $\prod_{1 \leq i < j \leq n} H_{ij}$ as a Pfaffian.  The classical case is due to Schur: 
\begin{equation}\label{SchurPfaffian}
\prod_{1 \leq i < j \leq n}  \frac{T_j - T_i}{T_j + T_i} = \Pf\left(\frac{T_j - T_i}{T_j + T_i}\right),
\end{equation}
where, if $n$ is odd, the $(0,j)$ entry of the matrix is $1$, for $1 \leq j \leq n$.
Our generalization is:

\begin{thm*}
Set $a_{ij} = \epsilon_i^{n-i+1} \epsilon_j^{n-j} H_{ij}$, for $1 \leq i < j \leq n$, and 
set $a_{0j} = \epsilon_j^{n-j}$ for $1 \leq j \leq n$.  Then 
\[
\prod_{1 \leq i < j \leq n} H_{ij} = \Pf(a_{ij})
\]
\end{thm*}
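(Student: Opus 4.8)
The plan is to prove this as a common generalization of Schur's identity \eqref{SchurPfaffian} by induction on $n$, expanding both sides along the ``first'' row. First I would dispose of the odd case by reducing it to the even case: if $n$ is odd, adjoin an index $0$ with $T_0 = 0$, set $\delta_0 = 1$ (so $\epsilon_0 = 1$), and observe that $H_{0j} = (T_j - 0)/(T_j + 0) = 1$ matches the convention, while the new factors $H_{0j}$ contribute nothing to the product $\prod_{i<j} H_{ij}$; thus the odd-$n$ statement for $\{1,\ldots,n\}$ becomes the even statement for $\{0,1,\ldots,n\}$ provided the sign bookkeeping $a_{ij} = \epsilon_i^{\,n-i+1}\epsilon_j^{\,n-j} H_{ij}$ is compatible when we relabel. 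So it suffices to treat $n = 2m$ even.

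For the even case, the key step is the row expansion. The product telescopes naturally: write
\[
  \prod_{1\leq i<j\leq n} H_{ij} = \left(\prod_{j=2}^n H_{1j}\right)\prod_{2\leq i<j\leq n} H_{ij},
\]
so by induction the second factor equals $\Pf(a_{ij})$ on the index set $\{2,\ldots,n\}$ — but one must check the exponents of the $\epsilon$'s there match what the theorem prescribes for a set of size $n-1$, which forces a sign adjustment $\prod_{j} \epsilon_j^{\pm 1}$ that has to be tracked carefully. On the Pfaffian side, expanding $\Pf(a_{ij})$ along the first row gives $\sum_{k=2}^n (-1)^k a_{1k}\Pf(a_{ij})_{\hat 1,\hat k}$. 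So the identity reduces to an algebraic relation purely among the $H_{ij}$ (after clearing the $\epsilon$'s): namely that $\prod_{j=2}^n H_{1j}$ times the Schur-type Pfaffian on $\{2,\ldots,n\}$ equals the expansion $\sum_k \pm H_{1k}\cdot(\text{sub-Pfaffian on }\{2,\ldots,\hat k,\ldots,n\})$. The natural tool here is the classical ``three-term'' Pfaffian identity, or more directly the rational-function identity underlying \eqref{SchurPfaffian}: I would substitute $U_i := \delta_i\delta_j\cdots$ — better, since $\delta_i\delta_j T_i$ appears, set new variables so that $H_{ij}$ has the exact form $(S_j - S_i)/(S_j + S_i)$ locally; but the $\delta$'s obstruct a global such substitution, which is exactly the point of the generalization.

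The cleanest route, and the one I would pursue, is to avoid re-deriving a Pfaffian recursion from scratch: instead reduce to Schur's identity \eqref{SchurPfaffian} by a change of variables on each ``side'' of the partition into $\delta_i = 1$ versus $\delta_i = 0$ indices. Partition $\{1,\ldots,n\}$ as $P \sqcup Z$ where $\delta_i = 1$ on $P$ and $\delta_i = 0$ on $Z$ (using $\delta_i^2 = \delta_i$; in the geometric application $\delta_i$ is an idempotent, but formally we may work over the ring $\Z[\delta_1,\ldots,\delta_n]/(\delta_i^2-\delta_i)$, which splits as a product of copies of $\Z$ indexed by such partitions, so it suffices to prove the identity in each factor). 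In the factor corresponding to $(P,Z)$: for $i,j$ both in $P$, $H_{ij} = (T_j-T_i)/(T_j+T_i)$; if at least one of $i,j$ is in $Z$, then $\delta_i\delta_j = 0$ so $H_{ij} = T_j/T_j = 1$. Hence $\prod_{i<j} H_{ij} = \prod_{i<j \in P}(T_j-T_i)/(T_j+T_i)$, which by \eqref{SchurPfaffian} (applied on the index set $P$, with a phantom $0$ if $|P|$ is odd) equals a Pfaffian of the matrix with entries $(T_j-T_i)/(T_j+T_i)$ on $P$ and zeros(ish) elsewhere. The remaining task — the step I expect to be the real obstacle — is to verify that this ``Schur Pfaffian on $P$, padded out on $Z$'' coincides with $\Pf(a_{ij})$ once the signs $\epsilon_i^{n-i+1}\epsilon_j^{n-j}$ are inserted (note $\epsilon_i = 1$ on $P$, $\epsilon_i = -1$ on $Z$), using the expansion-along-$0$th-row formula and the fact that a Pfaffian of a block matrix where one block has all-equal-to-$1$ structure collapses in a controlled way. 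Concretely one checks that in $\Pf(a_{ij})$, any term using an edge inside $Z$ or between $Z$ and $P{\setminus}\{$one vertex$\}$ cancels in pairs, leaving only the perfect matching that pairs up all of $Z$ among itself (or via the $0$th row) with a fixed sign, times the Schur Pfaffian on $P$; matching this sign against $\prod_{i<j\in P}(\text{trivial})\cdot\prod(\pm1)$ from the $\epsilon$ exponents is the bookkeeping I would do with care, most likely by induction on $|Z|$, peeling off the largest element of $Z$ and using the row-expansion to see it only pairs with $0$ or with another element of $Z$.
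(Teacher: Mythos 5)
Your proposal takes a genuinely different route from the paper's. The paper introduces $f[xy] = \epsilon_x H_{xy}$ and first proves the cleaner statement $\prod f[x_ix_j] = \Pf(f[x_ix_j])$; by the Knuth--Kazarian observation (via the Tanner/Pl\"ucker three-term Pfaffian identity), this reduces to checking only the $n=3$ case, which is an elementary cubic identity in $T_x,T_y,T_z$. The Theorem then follows from this Proposition by the easy Pfaffian-scaling identity $\Pf(\epsilon_i^{m_i}\epsilon_j^{m_j}a_{ij}) = \prod\epsilon_i^{m_i}\Pf(a_{ij})$. Your approach instead exploits that the universal ring $\Z[\delta_1,\ldots,\delta_n]/(\delta_i^2-\delta_i)$ is a product of $2^n$ copies of $\Z$ indexed by subsets $P$ (where $\delta_i=1$), and reduces the identity to each factor, where it becomes Schur's classical Pfaffian identity \eqref{SchurPfaffian} on the index set $P$ combined with a combinatorial claim about how the padded-out matrix's cross-edges ($P$ to $Z$) cancel in the Pfaffian expansion. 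This reduction-to-factors step is valid (the localization at non-zero-divisors commutes with the product decomposition), and your odd-to-even adjunction of the index $0$ with $T_0=0$, $\delta_0=1$ is compatible with the sign exponents as you suspected.

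What your proposal does not do is actually execute the cancellation argument. You correctly anticipate the needed claim --- in each factor, the Pfaffian $\Pf(a_{ij})$ collapses to a constant (coming from a fixed matching on $Z$, or via the $0$th row when $|Z|$ is odd) times the Schur Pfaffian on $P$, with all cross-matchings cancelling --- and I have checked this on several small cases and it does hold. But this is precisely the content that must be proved, and your sketch (``peeling off the largest element of $Z$'' with row-expansion) would require a nontrivial inductive bookkeeping that you do not carry out. So the approach is sound but the crux is left as a conjecture. By contrast, the paper's route avoids this entirely: once the Proposition is phrased in terms of $f[xy]$, Knuth and Kazarian hand you a free induction on $n$, and the only thing to verify by hand is a single cubic polynomial identity. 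If you want to salvage your approach, the cleanest way to close the gap is probably \emph{not} a direct cancellation argument but to observe that, even inside a single factor, the Proposition $\prod f[x_ix_j]=\Pf(f[x_ix_j])$ still reduces to $n=3$ by the same Knuth--Kazarian machinery --- at which point you are essentially back to the paper's proof. Worth noting also that your first two paragraphs (row-expansion induction on $n$ directly for the Theorem's matrix) do run into the obstruction you identify, namely that the sub-Pfaffian on $\{2,\ldots,n\}$ carries the wrong $\epsilon$-exponents; your pivot away from this was the right move.
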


We will deduce this from a result of Knuth \cite{Knuth}, as simplified by Kazarian \cite{K}.  Following their notation, define, for $x < y$ nonnegative integers, 
\[ 
f[xy] = \epsilon_x H_{xy} = \epsilon_x \frac{T_y - \delta_x\delta_y T_x}{T_y + \delta_x\delta_y T_x}.
\]
Note that $f[0y] = \epsilon_0 H_{0y} = 1$.  Set $f[xx] = 0$ and $f[xy] = -f[yx]$ for $x > y \geq 0$.  For a word $\alpha = x_1 \dots x_n$, with each $x_i$ a nonnegative integer, define $f[\alpha]$ to be the Pfaffian of the matrix whose $(i,j)$ entry is $f[x_ix_j]$, for $n$ even; for $n$ odd, define $f[\alpha]$ to be $f[0\alpha]$.  Note that $f[\alpha]$ vanishes if two letters in $\alpha$ coincide, and it changes sign if the positions of two letters are interchanged. 

\begin{prop*} 
For all $n \geq 2$, and nonnegative integers $x_1, \ldots, x_n$, 
\[
\prod_{1 \leq i < j \leq n}  f[x_i x_j]  =  \Pf(f[x_i x_j]).
\]
\end{prop*}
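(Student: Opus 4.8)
The plan is to recognize the identity as a special case of Knuth's Pfaffian factorization and then verify its hypothesis by a short computation. Knuth's criterion, in the streamlined form due to Kazarian, asserts: if $[\,\cdot\,]$ is a skew bracket on a set of letters, with $[xx]=0$, satisfying the three-term relation
\[
  [xy]+[yz]+[zx]+[xy]\,[yz]\,[zx]=0
\]
for every triple $x,y,z$, then $\Pf[\alpha]=\prod_{1\le i<j\le n}[x_ix_j]$ for every word $\alpha=x_1\cdots x_n$ --- with the usual convention that for $n$ odd one adjoins a letter $0$ with $[0y]=1$, so that $\Pf[\alpha]$ means $\Pf[0\alpha]$. (The relation says that $(1-[xy])/(1+[xy])$ is multiplicative in the middle index; for the Schur bracket $(T_y-T_x)/(T_y+T_x)$ it is the addition formula for $\tanh$. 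Knuth's lemma is proved by induction on $n$: expanding the Pfaffian along the row of the adjoined letter $0$ reduces it to a partial-fraction identity of the shape $\sum_k\big(\prod_{i\ne k}[x_kx_i]\big)^{-1}=1$, which in turn follows from the three-term relations.) Granting this, it remains only to check that the bracket $f$, together with the value $f[0y]=1$, satisfies the three-term relation.

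Since the left side of the three-term relation is alternating in $x,y,z$, I may assume $x<y<z$. Then $f[xy]=\epsilon_xH_{xy}$, $f[yz]=\epsilon_yH_{yz}$, $f[zx]=-\epsilon_xH_{xz}$, and since $\epsilon_x^2=1$ the relation is equivalent to
\[
  \epsilon_x\,(H_{xy}-H_{xz})=\epsilon_y\,H_{yz}\,(H_{xy}H_{xz}-1).
\]
I would now split into cases according to which of $\delta_x,\delta_y,\delta_z$ equal $1$, recalling that $H_{uv}=(T_v-T_u)/(T_v+T_u)$ when $\delta_u\delta_v=1$ and $H_{uv}=1$ otherwise, that $\epsilon_u=1$ when $\delta_u=1$ and $\epsilon_u=-1$ when $\delta_u=0$, and that $H_{0v}=1$, $\epsilon_0=1$. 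If at least one of $x,y,z$ has $\delta=0$ (in particular if one of them is $0$), then the two $H$'s involving that letter are both $1$, and in each such case the displayed identity collapses to $0=0$ or to a one-line cancellation of signs. The only case with content is $\delta_x=\delta_y=\delta_z=1$: there all $\epsilon$'s are $1$, each $H_{uv}=(T_v-T_u)/(T_v+T_u)$, and the identity becomes
\[
  H_{xy}+H_{yz}-H_{xz}=H_{xy}\,H_{xz}\,H_{yz},
\]
a direct rational-function computation --- equivalently, the $n=3$ instance of Schur's identity \eqref{SchurPfaffian}. This completes the proof.

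The real obstacle is Knuth's lemma itself: pinning down the three-term relation as the precise hypothesis, and proving that it forces the factorization (the induction must combine several instances of the relation, so it is more than a one-step Pfaffian expansion). Against that, the work specific to our bracket is light: one classical identity plus careful tracking of the $\epsilon$-signs, which intrude only because $f$ must be skew-symmetrized by hand --- the bare expression $\epsilon_xH_{xy}$ ceasing to be antisymmetric once some $\delta$'s vanish.
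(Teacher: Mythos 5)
Your proof is correct and follows essentially the same route as the paper: cite Knuth--Kazarian to reduce the Pfaffian factorization to the $n=3$ three-term relation (equivalently, to the paper's identity $\epsilon_x^2\epsilon_y H_{xy}H_{xz}H_{yz} = \epsilon_y H_{yz} - \epsilon_x H_{xz} + \epsilon_x H_{xy}$), then verify that relation by a short rational-function computation organized by the values of the $\delta$'s, whose only nontrivial case is Schur's identity. One small bookkeeping slip worth flagging: the paper sets $\delta_0 = 1$ (not $0$) and $T_0 = 0$, so $H_{0y}=1$ because $T_0=0$ rather than because $\delta_0=0$; your parenthetical ``(in particular if one of them is $0$)'' puts the letter $0$ in the wrong bucket, although the conclusion that the two $H$'s involving $0$ equal $1$ --- and hence the case analysis --- is unaffected.
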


\begin{proof}  By \cite{Knuth} and \cite{K}, this identity holds for all $n$ if it holds for $n = 3$.\footnote{This follows by an induction on $n$ from the Tanner identity 
\[
f[\alpha]f[\alpha w x y z] = f[\alpha w x] f[\alpha y z] -  f[\alpha w y] f[\alpha x z]  +  f[\alpha w z] f[\alpha x y].
\]
for any word $\alpha$ and letters $w, x, y$ and $z$ (see \cite{Knuth} (1.1) and (4.2)).}  
For $n = 3$, it asserts that, for $x < y < z$ positive integers, 
\[
\epsilon_x^2\epsilon_y H_{xy} H_{xz} H_{yz} = \epsilon_y H_{yz} - \epsilon_x H_{xz} + \epsilon_x H_{xy},
\]
or, $H_{xy} H_{xz} H_{yz}  = H_{yz} - \epsilon_x \epsilon_y H_{xz} + \epsilon_x \epsilon_y H_{xy}$. 
Clearing denominators, this amounts to a simple identity among cubic polynomials in the three variables $T_x$, $T_y$, and $T_z$, which is an easy exercise. 
\end{proof}

\begin{cor*}
Let $1 \leq x_1 < x_2 < \dots < x_n$.  Set $a_{ij} = \epsilon_{x_i} H_{x_i x_j}$ for $1 \leq i < j \leq n$ and $a_{0 j} = 1$ for $1 \leq j \leq n$.  Then 
\[
\prod_{1 \leq i < j \leq n}   H_{x_i x_j}  =  \prod_{i = 1}^n \epsilon_{x_i}^{n-i} \cdot \Pf(a_{ij}).
\]
Equivalently, setting $b_{ij} = \epsilon_{x_i}^{n-i+1} \epsilon_{x_j}^{n-j} H_{x_i x_j}$ for $1 \leq i < j \leq n$ and $b_{0j} =  \epsilon_{x_j}^{n-j}$ for $1 \leq j \leq n$, 
\[
\prod_{1 \leq i < j \leq n}   H_{x_i x_j}  =  \Pf(b_{ij}).
\]
\end{cor*}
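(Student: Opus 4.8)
The plan is to feed the given increasing sequence $x_1<\cdots<x_n$ into the Proposition and then absorb the resulting scalar factor. First, since the $x_i$ increase, for $i<j$ we have $x_i<x_j$, so $f[x_ix_j]=\epsilon_{x_i}H_{x_ix_j}$; hence the left-hand side of the Proposition factors as
\[
  \prod_{1\le i<j\le n} f[x_ix_j] \;=\; \Big(\prod_{i=1}^{n}\epsilon_{x_i}^{\,n-i}\Big)\cdot\prod_{1\le i<j\le n} H_{x_ix_j},
\]
the exponent $n-i$ simply counting the indices $j>i$. On the right-hand side, the matrix $\big(f[x_ix_j]\big)$ has off-diagonal entries $a_{ij}=\epsilon_{x_i}H_{x_ix_j}$, and when $n$ is odd the convention $f[\alpha]=f[0\alpha]$ contributes the zeroth row $f[0x_j]=1=a_{0j}$; so $\Pf\big(f[x_ix_j]\big)=\Pf(a_{ij})$ in the notation of the corollary. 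Equating the two sides of the Proposition and multiplying through by $\prod_i\epsilon_{x_i}^{\,n-i}$ --- which squares to $1$ because $\epsilon_{x_i}^2=1$ --- yields the first displayed identity.

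For the ``equivalently'' clause, I would note that $b_{ij}=\epsilon_{x_i}^{\,n-i}\,\epsilon_{x_j}^{\,n-j}\,a_{ij}$ for $1\le i<j\le n$, and likewise $b_{0j}=\epsilon_{x_j}^{\,n-j}\,a_{0j}$ since $a_{0j}=1$. Thus passing from $(a_{ij})$ to $(b_{ij})$ is conjugation of the associated skew-symmetric matrix $M$ (of even size, using the row and column indexed by $0$ when $n$ is odd) by the diagonal matrix $D=\mathrm{diag}(1,\epsilon_{x_1}^{\,n-1},\dots,\epsilon_{x_n}^{\,0})$. Since $D$ is diagonal, $\Pf(DMD)=\det(D)\,\Pf(M)$ as a polynomial identity in the entries, and $\det(D)=\prod_{i=1}^{n}\epsilon_{x_i}^{\,n-i}$; therefore $\Pf(b_{ij})=\big(\prod_i\epsilon_{x_i}^{\,n-i}\big)\Pf(a_{ij})$, which by the first identity equals $\prod_{1\le i<j\le n}H_{x_ix_j}$.

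I expect no genuine obstacle here: all the substance lives in the Proposition, and the corollary is purely a change of normalization. The only thing requiring attention is bookkeeping --- correctly tallying the exponents of $\epsilon_{x_i}$, and handling the odd-$n$ case uniformly by treating the zeroth index as carrying the scalar $1$ in $D$, so that the diagonal-conjugation formula applies verbatim to the augmented $(n+1)\times(n+1)$ matrix.
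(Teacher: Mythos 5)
Your proof is correct and essentially identical to the paper's. You apply the Proposition with the given increasing sequence, observe $f[x_ix_j]=\epsilon_{x_i}H_{x_ix_j}$ and $f[0x_j]=1$, tally the $\epsilon$-exponents to get the first identity, and then pass to the $b_{ij}$ via the standard Pfaffian scaling law $\Pf(DMD)=\det(D)\Pf(M)$ --- which is precisely the identity $\Pf(\epsilon_i^{m_i}\epsilon_j^{m_j}a_{ij})=\prod_i\epsilon_i^{m_i}\Pf(a_{ij})$ the paper invokes, just phrased as diagonal conjugation.
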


\begin{proof}
The Proposition says that $\prod_{i < j} \epsilon_{x_i} H_{x_i x_j} = \Pf(\epsilon_{x_i} H_{x_i x_j})$, which yields the first statement.  The second follows from the first, using the basic identity
\[
\Pf(\epsilon_i^{m_i}\epsilon_j^{m_j} a_{ij}) = \prod_{i=1}^n \epsilon_i^{m_i} \Pf(a_{ij})
\]
for any $a_{ij}$, $i < j$ and $\epsilon_i$ in the given ring, and nonnegative integers $m_i$.  (This identity follows immediately from the definition (\ref{PfaffianDef}) of the Pfaffian.)
\end{proof}

The Theorem is the special case of the Corollary when $x_i = i$ for $1 \leq i \leq n$.

\subsection{Raising operators}\label{ss.A.raising}

Formula \eqref{SchurPfaffian} can be rewritten
\begin{align}\label{RaisingPfaffian}
\prod_{1 \leq i < j \leq n} \frac{1 - T_i/T_j}{1+T_i/T_j} = \Pf\left(\frac{1 - T_i/T_j}{1+T_i/T_j}\right).
\end{align}
With $T_i/T_j$ interpreted as a raising operator $R_{ij}$, this formula leads to a classical Pfaffian formula for Schur Q-functions (see \cite[\S III.8]{Mac}).  Our goal here is a small generalization, to be applied to types C and B, and a larger one, using the Theorem from \S\ref{ss.A.pf}, to be applied to type D. 

We will take raising operators $R_{ij}$, for $1 \leq i < j \leq n$, to operate on sequences $s = (s_1, \ldots, s_n)$ in $\Z^n$, by raising the $i^{\text{th}}$ index by $1$, and lowering the $j^{\text{th}}$ index by $1$, keeping the others the same:
\[
R_{ij}(s_1, \ldots, s_n) = (s_1, \ldots, s_i + 1, \ldots, s_j - 1, \dots, s_n).
\]
These operators commute with each other, and satisfy the identities $R_{ij}R_{jk} = R_{ik}$ for $i < j < k$.  By a {\em raising operator} we mean any monomial $R = \prod_{i < j} R_{ij}^{m_{ij}}$ in these $R_{ij}$.  Any raising operator acts bijectively on the set $\Z^n$ of $s$'s.  

We will follow the tradition of using raising operators to act on expressions $\sum a_s c_s$, for $c_s$ certain fixed elements of a ring $A$, and the $a_s$ varying elements of $A$, with $R$ taking $\sum a_s c_s$ to $\sum a_s c_{R(s)}$.  Some care needs to be taken here, as the $c_s$ will evaluate to $0$ when any entry $s_i$ of $s$ is negative, but such $s$ need to appear in the expressions in order for the action of the raising operators to be associative and commutative.  (For example, $R_{23}(R_{12}(c_{(1, 0, 1)})) = R_{23}(c_{(2, -1, 1)}) = c_{(2, 0, 0)}$, which is $R_{12}(R_{23}(c_{(1, 0, 1)})) = R_{12}(c_{(1, 1, 0)})$.)  In addition, one wants only finite expressions $\sum a_s c_s$, but one wants to apply infinitely many raising operators, in expressions like $(1 - R_{ij})/(1+R_{ij}) = 1 + 2 \sum_{k > 0} (-1)^k (R_{ij})^k$.  Garsia \cite{Ga} described one way to deal with these problems in another setting.  We offer here a simple alternative, well suited to our situation.  

Let $P \subset \Z^n$ be the set of $s = (s_1, \ldots, s_n)$ satisfying the inequalities
\[
s_k + s_{k+1} + \cdots + s_n \geq 0 \,\,\,\,\,\,\,\,  \text {for} \,\,\,\,\,\,\,\, 1 \leq k \leq n.
\]
The idea is that any $c_s$ can be set equal to $0$ if $s$ is not in $P$, because any raising operator $R$ takes such a $s$ to an $R(s)$ that is also not in $P$.  We will use the following fact, which is easily proved by induction on $n$. 

\begin{lem*}
For any $s \in \Z^n$, there are only finitely many raising operators $R$ such that $R(s)$ is in $P$.
\end{lem*}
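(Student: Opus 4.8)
The plan is to track, for a sequence $q=(q_1,\ldots,q_n)\in\Z^n$, the reversed partial sums
\[
  \sigma_k(q) := q_k+q_{k+1}+\cdots+q_n \qquad (1\le k\le n),
\]
so that $P$ is \emph{exactly} the set of $q$ with $\sigma_k(q)\ge 0$ for all $k$. The first step is to record how a generator $R_{ij}$ with $i<j$ acts on these sums: it adds $1$ to $q_i$ and subtracts $1$ from $q_j$, so $\sigma_k(q)$ is unchanged unless $i<k\le j$, in which case it decreases by exactly $1$. Hence for a general raising operator $R=\prod_{i<j}R_{ij}^{m_{ij}}$ one has
\[
  \sigma_k(R(q)) = \sigma_k(q) - N_k(R), \qquad N_k(R):=\sum_{i<k\le j} m_{ij}\ \ge\ 0,
\]
so every $\sigma_k$ is non-increasing under every raising operator.

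With this in hand, the case $q\notin P$ is immediate: if $\sigma_{k_0}(q)<0$ for some $k_0$, then $\sigma_{k_0}(R(q))\le\sigma_{k_0}(q)<0$ for \emph{every} $R$, so $R(q)\notin P$ always and the set in question is empty. For $q\in P$, any $R$ with $R(q)\in P$ must satisfy $N_k(R)\le\sigma_k(q)$ for all $k$. Applying this with $k=i+1$ --- a legitimate index because $i<j\le n$ forces $2\le i+1\le n$, and the pair $(i,j)$ itself contributes to $N_{i+1}$ --- yields
\[
  m_{ij}\ \le\ N_{i+1}(R)\ \le\ \sigma_{i+1}(q).
\]
Thus each exponent $m_{ij}$ is bounded by a fixed nonnegative integer, and since there are only finitely many pairs $i<j$, there are only finitely many such $R$ (at most $\prod_{1\le i<j\le n}(\sigma_{i+1}(q)+1)$ of them). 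This completes the argument.

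There is no serious obstacle; the one point deserving thought is the choice of invariant. What makes the reversed partial sums $\sigma_k$ exactly right is the coincidence of two facts --- they are simultaneously (a) monotonically non-increasing under \emph{every} raising operator, and (b) the precise quantities whose nonnegativity defines $P$ --- and once these are noted, both the finiteness and an explicit bound drop out in a single pass. One could instead run the induction on $n$ suggested in the text, peeling off the last coordinate and using $\sum_{i<n}m_{in}\le q_n$ to cut down to a problem in $\Z^{n-1}$, but the partial-sum bookkeeping above seems cleaner.
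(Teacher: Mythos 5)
Your argument is correct, and in fact it is a cleaner route than the one the paper sketches (the paper only says the lemma ``is easily proved by induction on $n$'' and gives no details). The key observation is well chosen: the reversed partial sums $\sigma_k(q)=q_k+\cdots+q_n$ are exactly the quantities whose nonnegativity defines $P$, and each generator $R_{ij}$ with $i<j$ decreases $\sigma_k$ by $1$ precisely when $i<k\le j$, so every $\sigma_k$ is monotone non-increasing under every raising operator. From $N_k(R)\le\sigma_k(q)$ for all $k$, specializing to $k=i+1$ (valid since $1\le i<j\le n$ gives $2\le i+1\le n$, and the pair $(i,j)$ contributes to $N_{i+1}$ because $i<i+1\le j$) bounds each exponent $m_{ij}$ by $\sigma_{i+1}(q)$, and the finiteness follows. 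The inductive version the paper alludes to would peel off the last coordinate via $\sum_{i<n}m_{in}\le\sigma_n(q)=q_n$ and recurse, but your single-pass argument avoids the bookkeeping of what the induction hypothesis should say after modifying the truncated tuple, and it comes with the explicit bound $\prod_{1\le i<j\le n}(\sigma_{i+1}(q)+1)$ for free. One cosmetic remark: the lemma as stated counts raising operators as monomials $\prod R_{ij}^{m_{ij}}$ (distinct monomials such as $R_{13}$ and $R_{12}R_{23}$ define the same map on $\Z^n$); your bound on the exponent vector is exactly what is needed for the intended application, namely that the coefficient $\sum_{R(p)=q} b_R a_p$ in the power-series action is a finite sum.
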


Now let $A$ be any commutative ring.  Assume we are given elements $c(i)_r$ in $A$, for $1 \leq i \leq n$ and $r \in \Z$, with $c(i)_r = 0$ if $r < 0$.  For $s \in \Z^n$, we write $c_s$ for $c(1)_{s_1} c(2)_{s_2} \cdots c(n)_{s_n}$.  By an {\em expression} we mean a finite formal sum $\sum_{s \in P} a_s c_s$, with $a_s$ in $A$.  For any raising operator $R$, we define 
$R(\sum a_s c_s) $ to be the sum $\sum_{R(s) \in P} a_s c_{R(s)}$; that is, one applies $R$ to the index $s$ of each $c_s$, but discards the term if $R(s)$ is not in $P$.\footnote{Logically, an expression is a function $a \colon P \to A$, taking $s$ to $a_s$, which vanishes for all but a finite number of $s$; $R(a)$ is the function that takes $s$ to $a_{R^{-1}(s)}$.}  
This gives an action of the polynomial ring $A[R_{ij}]_{1 \leq i < j \leq n}$ on expressions: 
\[
\left(\sum b_R R \right) \left(\sum a_s c_s\right) = \sum_{t \in P} \left( \sum_{R(s) = t} b_R a_s \right) c_t .
\]
By the lemma, this extends to an action of the power series ring  $A[[R_{ij}]]_{1 \leq i < j \leq n}$ on the set of all expressions.\footnote{Traditionally, one allows arbitrarily long sequences $(s_1, \ldots, s_n)$, but also requires that $c(i)_0 = 1$ for all $i$.  Our conventions apply for such sequences, provided that there is an $N$ such that $c(i)_0 = 1$ for $i > N$.}  By the {\em evaluation} of an expression $\sum a_s c_s$ we mean the corresponding element $\sum a_s c(1)_{s_1} \cdots c(n)_{s_n}$ in $A$.

The following is a version of the classical result that suffices for our application to types C and B.  It follows from the identity \eqref{RaisingPfaffian}.

\begin{prop*}
Fix $s = (s_1, \ldots, s_n)$ in $P$.  The evaluation of 
\[
   \left( \prod_{1 \leq i < j \leq n} \frac{1 - R_{ij}}{1 + R_{ij}}\right) \cdot c_{s} 
\]
is the Pfaffian of the matrix whose entries are
\[
m_{ij} = c(i)_{s_i} c(j)_{s_j} + 2 \sum_{k > 0} (-1)^k c(i)_{s_i + k} c(j)_{s_j - k} 
\] 
for $1 \leq i < j \leq n$, with $m_{0 j} = c(j)_{s_j}$ for $1 \leq j \leq n$ when $n$ is odd.
\end{prop*}

For type D we need a strengthening of this proposition, in which each $c(i)_r$ is written as as sum: 
\[
c(i)_r = d(i)_r +  e(i)_r ,
\]
for elements $d(i)_r$ and $e(i)_r$ in $A$, with $d(i)_r = e(i)_r = 0$ for $r < 0$.

New operators $\delta_i$, for $1 \leq i \leq n$ act on these expressions, with $\delta_i$ sending $d(i)_{s_i} +  e(i)_{s_i}$ to $d(i)_{p_i}$, leaving the other factors alone.  That is, $\delta_i$ changes $e(i)$ to $0$.  
Note that the operators $\delta_i$ commute with each other and with the raising operators, and $\delta_i^2 = \delta_i$ for all $i$.  Set $\epsilon_i = 2 \delta_i - 1$; this has the effect of changing $d(i)_{s_i}+e(i)_{s_i}$ to $d(i)_{s_i}-e(i)_{s_i}$. 

\begin{thm*}
Fix $s$ in $P$.  The evaluation of
\[
\left(\prod_{1 \leq i < j \leq n} \frac{1 - \delta_i\delta_jR_{ij}}{1 + \delta_i\delta_jR_{ij}}\right) \cdot c_{s}
\]
is equal to the Pfaffian of the matrix whose entries are
\begin{equation*}
m_{ij} \, = \, (d(i)_{s_i} + (-1)^{n-i+1}e(i)_{s_i}) \cdot (d(j)_{s_j} + (-1)^{n-j}e(j)_{s_j}) 
+ 2 \sum_{k > 0} (-1)^k d(i)_{s_i + k} d(j)_{s_j - k} ,
\end{equation*}
for $1\leq i<j\leq n$, with $m_{0j} = d(j)_{s_j} + (-1)^{n-j} e(j)_{s_j}$ for $1 \leq j \leq n$ when $n$ is odd.  \end{thm*}

\begin{proof}
By the Theorem of \S\ref{ss.A.pf}, 
\[ 
\prod_{1 \leq i < j \leq n} \frac{1 - \delta_i\delta_jR_{ij}}{1 + \delta_i\delta_jR_{ij}}  \, = \, 
\Pf\left(\epsilon_i^{n-i+1}\epsilon_j^{n-j} \frac{1 - \delta_i\delta_jR_{ij}}{1 + \delta_i\delta_jR_{ij}} \right) .
\]
The conclusion follows, since the evaluation of $\epsilon_i^{n-i+1}\epsilon_j^{n-j} \frac{1 - \delta_i\delta_jR_{ij}}{1 + \delta_i\delta_jR_{ij}} $ on $c_{s}$ is $m_{ij} \cdot \prod_{k \neq i, j} c(k)_{s_k}$.
\end{proof}

\begin{ex*}
The case needed for the type D application is when $e(i) = (-1)^{i}\tilde{e}(i)$ for $1 \leq i \leq n$.  In this case
\begin{equation*}
m_{ij} = (d(i)_{s_i} - (-1)^n \tilde{e}(i)_{s_i}) \cdot (d(j)_{s_j} + (-1)^n \tilde{e}(j)_{s_j}) 
+ 2 \sum_{k > 0} (-1)^k d(i)_{s_i + k} d(j)_{s_j - k} ,
\end{equation*}
with $m_{0j} = d(j)_{s_j} + \tilde{e}(j)_{s_j}$.
\end{ex*}

\subsection{A theta-polynomial identity}\label{ss.A.theta}

We need a preliminary identity, which holds in any commutative ring $A$.  For any Laurent series $B=B(t) = \sum_r b_r\, t^r$, with coefficients $b_i$ in $A$, we define $F^B = F^B(t) = \sum F^B_p\, t^p$ by
\[
  F^B(u) = B(-t)\cdot B(t),
\]
with $u=-t^2$.  For any $C=\sum_r c_r\,t^r$, with $(BC)(t) = B(t)\cdot C(t)$, it follows that $F^{BC}(u) = F^B(u)\cdot F^C(u)$, and hence
\begin{equation}\label{e.rel1}
 F^{BC}_r = \sum_{p+q=r} F^B_p \cdot F^C_q.
\end{equation}
In particular, if for some $\lambda'> \lambda$, the relations $F^B_p=0$ and $F^C_q = 0$ hold in $A$ for all $p>\lambda'-\lambda$ and $q\geq \lambda$, then $F^{BC}_r = 0$ for all $r\geq \lambda'$.

We will write $c(i)$ for a collection of elements $c(i)_r$, for $1\leq i\leq \ell$ and $r \in \Z$, and write $c$ for $(c(1), \ldots, c(\ell))$.  If $c(i)=c(j)$, then writing $C(t) = \sum_r c(i)_r\, t^r = \sum_r c(j)_r\, t^r$, we have
\begin{equation}\label{e.rel}
 F^C_p = \left(\frac{1-R_{ij}}{1+R_{ij}}\right)\cdot \left( c(i)_p\,c(j)_p \right) .
\end{equation}

Fix an integer $k$, $0\leq k\leq \ell$, 
and a unimodal sequence $\rho=(\rho_1,\ldots,\rho_\ell)$ such that $\rho_j=j-1$ when $j\leq k$, and $k \geq \rho_{k+1} \geq \cdots \geq \rho_\ell\geq 0$.  A partition $\lambda = (\lambda_1\geq \cdots \geq \lambda_\ell\geq 0)$ is called $\rho$-strict if the sequence $\lambda_1+\rho_1,\ldots,\lambda_\ell+\rho_\ell$ is non-increasing.  
(As with any partition, such a $\lambda$ belongs to the set $P$.)  Given a $\rho$-strict partition $\lambda$, the theta-polynomial is
\[
\Theta^{(\rho)}_{\lambda}(c) = R^{(\rho,\ell)}\cdot c_\lambda,
\] 
where $R^{(\rho,\ell)}$ is the raising operator
\begin{align*}
 R^{(\rho,\ell)} &=  \prod_{1\leq i\leq \rho_j < j\leq \ell} (1+R_{ij})^{-1} \cdot \prod_{1\leq i<j\leq \ell} (1-R_{ij}) .
\end{align*}

\begin{lem}\label{l.inflate}
Fix a $\rho$-strict partition $\lambda$, integers $1\leq m<n\leq \ell$, and elements $b_1,\ldots,b_{n-m}$ in the ring $A$.  Assume $c(m)=c(m+1)=\cdots=c(n)$.  Let $c'(m)=c(m)\cdot(1+b_1+\cdots+b_{n-m})$, and $c'(i)=c(i)$ for $i\neq m$.
\begin{enumerate}[(i)]
\item For $k\leq m<n\leq \ell$, suppose $\lambda_m=\lambda_{m+1}=\cdots=\lambda_{n}$ and also $\rho_m=\cdots=\rho_n$.  Then 
\[
\Theta^{(\rho)}_{\lambda}(c')  = 
\Theta^{(\rho)}_{\lambda}(c).
\] 

\item For $1\leq m<n\leq k$, suppose $\lambda_m = \lambda_{m+1}+1 = \cdots = \lambda_{n}+n-m$ and $\rho_j\not\in\{m,\ldots,n-1\}$ for all $j>n$.  Assume that for each $1\leq i\leq k$, with $C=\sum c(i)_r\,t^r$, the relations $F^C_p=0$ hold for all $p\geq \lambda_i$.  Then
\[
\Theta^{(\rho)}_{\lambda}(c')  = 
\Theta^{(\rho)}_{\lambda}(c),
\] 
and the relations $F^{C'}_p=0$ hold for all $p\geq \lambda_i$, where $C'=\sum c'(i)_r\,t^r$.
\end{enumerate}
\end{lem}

\begin{proof}
For both statements, it is straightforward to reduce to the case $n=m+1$, so we assume this.  

For $m\geq k$, let us write $\lambda_m=\lambda_{m+1}=p$.  The claim is that
\begin{align*}
  R^{(\rho,\ell)} \cdot (\cdots c(m)_{p}\, c(m+1)_{p} \cdots ) &= R^{(\rho,\ell)} \cdot (\cdots c(m)_{p}\, c(m+1)_{p} \cdots ) \\
  & \qquad  + b_1 \cdot R^{(\rho,\ell)} \cdot (\cdots c(m)_{p-1} \,c(m+1)_{p} \cdots ),
\end{align*}
under the assumption $c(m)=c(m+1)$.  We will see that the second term on the right-hand side is zero.

We can write the raising operator as
\begin{align*}
 R^{(\rho,\ell)} &= R' \cdot \left( \frac{\prod_{i=1}^{k-1} (1-R_{i,m})(1-R_{i,m+1})}{\prod_{i=1}^{\rho_m} (1+R_{i,m})(1+R_{i,m+1})}\right) \\
 & \qquad \times \left( \prod_{i=k}^{m-1} \prod_{j=m+2}^\ell (1-R_{i,m})(1-R_{i,m+1})(1-R_{m,j})(1-R_{m+1,j}) \right) (1-R_{m,m+1}),
\end{align*}
where $R'$ involves only $R_{ij}$ such that $i,j\not\in\{m,m+1\}$.  (We used $\rho_m=\rho_{m+1}$ in collecting the denominators of the second factor.)  All factors other than $(1-R_{m,m+1})$ are symmetric in $m$ and $m+1$.  The operator $R_{m,m+1}$ leaves $(\cdots c(m)_{p-1}\,c(m+1)_p \cdots )$ invariant, since $c(m)=c(m+1)$.  It follows that $R^{(\rho,\ell)}\cdot (\cdots c(m)_{p-1}\,c(m+1)_p \cdots ) =0$, as claimed.

For $m<k$, the argument is similar.  We write the raising operator as
\begin{align*}
 R^{(\rho,\ell)} &= R' \cdot \left( \prod_{i=1}^{m-1}\prod_{j=m+2}^\ell (1-R_{i,m})(1-R_{i,m+1})(1-R_{m,j})(1-R_{m+1,j}) \right) \\
 & \qquad \times \left( \mathop{\prod_{j=m+2}^\ell}_{\rho_j\geq m+1} \frac{1}{1+R_{m+1,j}}  \mathop{\prod_{j=m+2}^\ell}_{\rho_j\geq m} \frac{1}{1+R_{m,j}} \right) \cdot \left( \frac{1-R_{m,m+1}}{1+R_{m,m+1}}\right).
\end{align*}
As before, $R'$ does not involve $m$ or $m+1$, so the factors on the first line are clearly symmetric in $m$ and $m+1$.  Since we assume $\rho_j\neq m$ for all $j\geq m+2$, the conditions $\rho_j\geq m+1$ and $\rho_j\geq m$ are equivalent, so the first factor on the second line is also symmetric in $m$ and $m+1$.  To complete the argument, observe that the relation
\[
  \frac{1-R_{m,m+1}}{1+R_{m,m+1}}\cdot c(m)_p\, c(m+1)_p = F_p^C
\]
is anti-symmetric in $m$ and $m+1$, so it remains zero after applying any raising operator which is symmetric in these indices.
\end{proof}

A similar statement holds for eta-polynomials.  These are defined using the raising operator
\[
 \tilde{R}^{(\rho,\ell)} =  \prod_{1\leq i\leq \rho_j < j\leq \ell} (1+\tilde{R}_{ij})^{-1} \cdot \prod_{1\leq i<j\leq \ell} (1-\tilde{R}_{ij}) ,
\]
where $\tilde{R}_{ij} = \delta_i\delta_j R_{ij}$.  
For $1\leq i\leq \ell$, given elements $c(i)_r = d(i)_r+e(i)_r$, with $e(i)=0$ for $i>k$, the eta-polynomial is defined as
\[
\Eta^{(\rho)}_{\lambda}(c) = \tilde{R}^{(\rho,k,\ell)}\cdot c_{\lambda}.
\]

Given such $c(i)$, with $C(t) = \sum c(i)_r\,t^r$ and $D(t)=\sum d(i)_r\,t^r$, let $\tilde{F}^C_p = F^D_p - e(i)_p^2$; if $d(i)=d(j)$ and $e(i)=e(j)$, this is
\[
  \tilde{F}^C_p = \left(\frac{1-\delta_i\delta_j R_{ij}}{1+\delta_i\delta_j R_{ij}}\right) \cdot (c(i)_p \,c(j)_p).
\]

\begin{lem}\label{l.inflate2}
Assume the hypotheses and notation of Lemma~\ref{l.inflate}.

\begin{enumerate}[(i)]
\item With the same hypotheses as in Lemma~\ref{l.inflate}(i), we have
\[
\Eta^{(\rho)}_{\lambda}(c')  = 
\Eta^{(\rho)}_{\lambda}(c).
\]

\item With the hypotheses of Lemma~\ref{l.inflate}(i), assume additionally that $d(m)=d(m+1)=\cdots=d(n)$, so also $e(m)=\cdots=e(n)$.  Assume that for each $1\leq i\leq k$, the relations $\tilde{F}^C_p=0$ and $e(i)_q=0$ hold for all $p\geq \lambda_i$ and $q>\lambda_i$.  Then
\[
\Eta^{(\rho)}_{\lambda}(c')  = 
\Eta^{(\rho)}_{\lambda}(c),
\] 
and the relations $\tilde{F}^{C'}_p=0$ and $e'(i)_q=0$ hold for all $p\geq \lambda_i$ and $q>\lambda_i$, where $C'=\sum c'(i)_r\,t^r$ and $e'(i)=e(i)\cdot b$.
\end{enumerate}
\end{lem}

\noindent
The proof is similar to that of Lemma~\ref{l.inflate}.  For the last statement, concerning the relations, when $i\neq m$ there is nothing to check, since $\tilde{F}^{C'}=\tilde{F}^C$ by definition.  For $i=m$, observe that
\[
  \tilde{F}^{C'}_p = F^{BD}_p -  e'(m)_p^2,
\]
where $e'(m)_p = e(m)_p + b_1 e(m)_{p-1} + \cdots$.  Since $e(m)_q=e(n)_q=0$ for $q>\lambda_n$, and $b_q=0$ for $q>n-m$, we have $e'(m)_p=0$ for $p>\lambda_n+n-m=\lambda_m$, and $e'(m)_{\lambda_m}=b_{n-m} e(m)_{\lambda_m-n+m}$.  Similarly, the relations $\tilde{F}^C_p$ imply $F^D_p=0$ for $p>\lambda_n$, so using \eqref{e.rel1}, we have $F^{BD}_p=0$ for $p>\lambda_m$ and $F^{BD}_{\lambda_m}=b_{n-m}^2 F^D_{\lambda_m-n+m}$.  It follows that $\tilde{F}^{C'}_p = 0$ for $p>\lambda_m$, and $\tilde{F}^{C'}_{\lambda_m} = b_{n-m}^2\tilde{F}^C_{\lambda_m-n+m} = 0$ as well.

\newpage

\section{On quadric bundles}\label{s.B}

\renewcommand{\thesubsection}{B.\arabic{subsection}}
\renewcommand{\theequation}{B.\arabic{equation}}
\setcounter{thm}{0}
\setcounter{equation}{0}


Let $V$ be a vector bundle on $X$ of rank either $2n+1$ or $2n$, to be specified, and equip $V$ with a nondegenerate quadratic form taking values in the trivial line bundle.  We will compute a basic degeneracy class on $X$.

Let $E_p \subset V$ be an isotropic subbundle; when the rank of $V$ is odd, $E_p$ has rank $n+1-p$, and in the even rank case, $E_p$ has rank $n-p$.  Let $F'\subset V$ be an isotropic line bundle.  We assume $E_p \subseteq E$ and $F'\subseteq F$ are contained in some fixed maximal isotropic subbundles (of rank $n$).

The quadratic form induces isomorphisms $V\isom V^*$, and more generally $V/E^\perp \isom E^*$, for isotropic subbundles $E\subset V$.

We assume that the bundles $E_p$ and $F'$ are in general position, so the locus on $X$ where $\dim(E_p \cap F') \geq 1$ has codimension $p+n-1$.  This locus, $\Omega = \{x\in X\,|\, E_p \supseteq F'\}$, is the one whose class we will compute.

Let $\QQ(V) \xrightarrow{\pi} X$ be the quadric bundle associated to $V$, with tautological bundle $S=S_1\subset V$.  
The line bundle $F'$ defines a section
\[
  s\colon X \to \QQ(V),
\]
and the task is to compute $[\Omega]=s^*[\P(E_p)]$.

\begin{prop*}
When $V$ has odd rank, we have
\[
2 s^*[\P(E_p)] = c_{p+n-1}(V-E_p-F'-M),
\]
where $M=F^\perp/F$.  
When $V$ has even rank, we have
\[
 2 s^*[\P(E_p)] =  c_{p+n-1}(V-E_p-F') - e(E_p,F'),
\]
where $e(E_p,F') = (-1)^{\dim(E\cap F)}c_{p+n-1}(E/E_p + F/F')$.
\end{prop*}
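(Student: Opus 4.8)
The plan is to carry out the computation on the quadric bundle $\pi\colon\QQ(V)\to X$ of isotropic lines, with tautological subbundle $S\subset\pi^*V$. The section $s$ is the one determined by the isotropic line subbundle $F'$, so $s^*S=F'$ and hence $s^*\O(1)=F'^*$; the locus in question is $\Omega=s^*[\P(E_p)]$. I would first reduce to the case that $E_p$ is a \emph{maximal} isotropic bundle $E$. Since $\P(E_p)\subset\P(E)\subset\QQ(V)$ and $\P(E_p)$ is the zero scheme inside $\P(E)$ of the tautological section of $\O(1)\otimes\pi^*(E/E_p)$, one gets $[\P(E_p)]=c_{\rk(E/E_p)}(\O(1)\otimes\pi^*(E/E_p))\cdot[\P(E)]$ in $A^*\QQ(V)$, and under the general position hypothesis $[\Omega]=c_{\rk(E/E_p)}((E/E_p)\otimes F'^*)\cdot\kappa$, where $\kappa:=s^*[\P(E)]$ is the class of $\{x\colon F'_x\subseteq E_x\}$. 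Everything then reduces to computing $\kappa$; at the end, Identities~\eqref{ident1} and~\eqref{ident2} collapse the product back into a single Chern class of the stated form.

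For the odd case ($\rk V=2n+1$), the key point is that an isotropic line contained in $E^\perp$ automatically lies in $E$: the line bundle $E^\perp/E$ carries a nondegenerate $L$-valued form and so has no isotropic vectors. Thus $\{F'\subseteq E\}=\{F'\subseteq E^\perp\}$, the latter being the zero scheme of the form-induced map $F'\to V/E^\perp\isom E^*\otimes L$. Passing to $\QQ(V)$, the corresponding section of $\pi^*(V/E^\perp)\otimes\O(1)$ has zero scheme $\QQ(V)\cap\P(E^\perp)$, and restricting the quadratic form to $\P(E^\perp)$ exhibits it as a perfect square in $E^\perp/E$; hence this zero scheme equals $2\,\P(E)$ as a cycle, so $2[\P(E)]=c_n(\pi^*(V/E^\perp)\otimes\O(1))$ in $A^*\QQ(V)$. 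Applying $s^*$, using $V/E^\perp\isom E^*\otimes L$ (so Identity~\eqref{ident1} applies) and $E^\perp/E\isom M$ (which follows from $(E^\perp/E)^{\otimes2}\isom L=M^{\otimes2}$), and then the reduction above — noting that $V-E_p-M$ has the total Chern class of an honest bundle of rank $p+n-1$, so Identity~\eqref{ident2} applies — should land on $2[\Omega]=c_{p+n-1}(V-E_p-F'-M)$.

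The even case ($\rk V=2n$) is where the real work lies. Now $E^\perp=E$, so the perfect-square argument degenerates: $\P(E^\perp)=\P(E)$ already sits inside $\QQ(V)$, and the section $F'\to V/E$ vanishes in codimension one less than its rank. A residual/excess-intersection computation still gives $c_n(\pi^*(V/E)\otimes\O(1))=(2\xi+\pi^*c_1(L))\cdot[\P(E)]$ in $A^*\QQ(V)$, which only determines $\kappa$ up to multiplication by a divisor. To pin down $[\P(E)]$ itself one must use that the even quadric bundle carries two families of maximal linear subspaces: writing $[\P(E')]$ for the class of a ruling from the other family, one has $\xi^{n-1}=[\P(E)]+[\P(E')]+(\pi^*\text{-corrections})$ while $[\P(E)]-[\P(E')]=\pm e_{\QQ}$ for an Euler class $e_{\QQ}$ built from $E$ and the form (this is where the square root $M$ enters). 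Averaging these two relations and pulling back by $s$ should produce $2\kappa=c_{n-1}(V-E-F'+M)-e(E,F')$, with the sign of $e$ equal to $(-1)^{\dim(E\cap F)}$, recording whether $E$ lies in the family of $F$; feeding this through the reduction gives the even-rank formula.

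I expect the main obstacle to be the even case: extracting the ruling class $[\P(E)]$ on the nose rather than merely up to a divisor, identifying the difference of the two ruling classes with the precise Euler class appearing in $e(E_p,F')$, and — most delicately — tracking the sign $(-1)^{\dim(E\cap F)}$ through both the pullback by $s$ and the reduction to $E_p$. I would fix these signs by testing on a small model (the relative quadric surface $Q^2\to X$, or the case in which $E_p$ or $F'$ is itself maximal isotropic) and by cross-checking against the known type D Grassmannian formulas.
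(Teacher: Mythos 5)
Your reduction to maximal isotropic $E$ and the perfect-square argument for the odd-rank case is correct and is a genuinely different route from the paper's.  The paper instead establishes an explicit presentation of $A^*\QQ(V)$ (the Theorem of Appendix~B), derives a pushforward formula $2\pi_*(h^k ef) = \cdots$ (the Lemma of Appendix~B), and then computes $\pi_*([\P(E_p)]\cdot[\P(F')])$ directly; your version in the odd case avoids the Chow-ring presentation entirely, using only that $\QQ(V)\cap\P(E^\perp) = 2\P(E)$ because the form restricted to $E^\perp$ is a square.  The final collapsing via Identities~(1) and~(2) also goes through there, precisely because $V/E^\perp \oplus E/E_p$ is an honest bundle of rank $p+n-1$, so $c_1(F'^*)^a c_b(\,\cdot\,- F') = c_{a+b}(\,\cdot\,-F')$ applies.

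For the even-rank case, however, there are two genuine gaps, not just the one you flag.  The first is the one you acknowledge: pinning down $[\P(E)]$ on the nose from the ruling decomposition $h^{n-1} = [\P(E)] + [\P(E')] + (\text{corrections})$ and the Euler-class difference is exactly the content that has to be proved, and ``averaging should produce'' is not an argument; the paper does this work via the relations $(2h+z)f = \sum c_i(V/F)h^{n-i}$ and $f^2 = c_{n-1}(\cdots)\,f$, plus an induction in the pushforward Lemma.  The second gap is in the ``collapsing'' step itself, which you present as routine.  Having computed $2\kappa = c_{n-1}(V-E-F'+M) - e(E,F')$, you multiply by $c_p\bigl((E/E_p)\otimes F'^*\bigr) = c_p(E/E_p - F')$.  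But Identity~(2) no longer applies: the relevant virtual bundle $V-E+M \cong V/E \oplus M$ has rank $n+1 > n-1$, so $c_1(F'^*)^i\, c_{n-1}(V-E-F'+M) \neq c_{n-1+i}(V-E-F'+M)$ in general.  Consequently $c_p(E/E_p-F')\cdot c_{n-1}(V-E-F'+M)$ does \emph{not} equal $c_{p+n-1}(V-E_p-F'+M)$, and likewise $c_p(E/E_p-F')\cdot e(E,F')$ does not equal $e(E_p,F')$, because $c_p(E/E_p\otimes F'^*)\neq c_p(E/E_p\otimes M)$.  The two error terms do cancel against each other, but this is itself a nontrivial identity — already for $n=2$, $p=1$ it reduces to the fact that for maximal isotropics $E,F$ in an even orthogonal bundle (with form in $M^{\otimes 2}$), $c_2(V/E\otimes M) - c_1(M)^2 = (-1)^{\dim(E\cap F)} c_2(F\otimes M)$ — and it is not a consequence of Identities~(1)--(4).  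The paper sidesteps this because it never separates the product $\pi_*([\P(E_p)][\P(F')])$ into $c_p(\cdots)\cdot\kappa$; the Cauchy-type collapsing happens term-by-term inside the pushforward, where the $c_1(M)^k$ contributions to $\pi_*(h^k ef)$ assemble into $e(E_p,F')$ automatically.  To complete your approach you would have to prove this extra Euler-class relation (or do the multiplication by $c_p(E/E_p\otimes S^*)$ \emph{before} pushing forward, which is exactly what the paper does).
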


\noindent
Recall that the parity of $\dim(E\cap F)$ is constant in (connected) families, so the sign is well defined.

The proof of the proposition relies on the presentation of the Chow ring of quadric bundles.

\begin{thm*}[{\cite[Theorem~7]{EG}, \cite[Theorem~B.1]{A}}]
With the notation as above, write $f=[\P(F)]$ in $A^*\QQ(V)$ and $h=c_1(S^*)$.  Then $A^*\QQ(V)=(A^*X)[h,f]/I$, where $I$ is generated by
\begin{align}
 2f &= h^n + c_1(V/F^\perp) h^{n-1} + \cdots + c_n(V/F^\perp),\label{e.rel-odd1}\\
 f^2  &= (-1)^{n}(c_{n}(F) + c_{n-2}(F)h^2 + \cdots) f
\end{align}
when $V$ has rank $2n+1$, and by
\begin{align}
 2hf &= h^n - c_1(F) h^{n-1} + \cdots + (-1)^n c_n(F), \label{e.rel-even1}\\
 f^2  &= (-1)^{n-1}(c_{n-1}(F) + c_{n-3}(F)h^2 + \cdots) f
\end{align}
when $V$ has rank $2n$.
\end{thm*}

We need a lemma:
\begin{lem*}
Let $k\geq 0$.  If the rank of $V$ is odd, then we have
\[
  2\pi_*(h^k ef)= c_{k+1}(V-E-F^\perp) = c_{k+1}(V-E-F-M).
\]
If the rank of $V$ is even, we have
\[
  2\,\pi_*(h^k ef) = \begin{cases} c_k(V-E-F) &\text{ if } k>0; \\ 1 - (-1)^{\dim(E\cap F)} & \text{ if } k=0. \end{cases}
\]
\end{lem*}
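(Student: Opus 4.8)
The plan is to reduce the pushforward to a computation on the projective bundle $\P(F)\to X$ by writing $f=[\P(F)]$ as $f=\jmath_*1$, where $\jmath\colon\P(F)\hookrightarrow\QQ(V)$ is the inclusion of the sub-quadric-bundle of isotropic lines lying in $F$. The projection formula then gives
\[
 \pi_*(h^k ef)=\pi_*\bigl(h^k e\cdot\jmath_*1\bigr)=(\pi\jmath)_*\bigl(\jmath^*(h^k e)\bigr).
\]
Now $\pi\jmath\colon\P(F)\to X$ is the projectivization of the rank-$n$ isotropic bundle $F$; its tautological subbundle is $\jmath^*S=\O_{\P(F)}(-1)$, so $\jmath^*h=\zeta:=c_1(\O_{\P(F)}(1))$, and $(\pi\jmath)_*\bigl(\zeta^{\,m}\bigr)=s_{m-n+1}(F)$, the Segre classes (with $s_0(F)=1$ and $s_i(F)=0$ for $i<0$).

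The key input is a formula for $\jmath^*e$, coming from the presentation of $A^*\QQ(V)$ in the Theorem above applied with $E$ in the role of $F$: in the odd-rank case, relation~\eqref{e.rel-odd1} reads $2e=h^n+c_1(V/E^\perp)h^{n-1}+\cdots+c_n(V/E^\perp)$ in $A^*\QQ(V)$, which by the isomorphism $V/E^\perp\isom E^*\otimes L$ becomes $2e=c_n(S^*\otimes E^*\otimes L)$. Pulling back along $\jmath$,
\[
 2\,\jmath^*e=c_n\bigl(\O_{\P(F)}(1)\otimes E^*\otimes L\bigr)=\sum_{i=0}^n c_i(E^*\otimes L)\,\zeta^{\,n-i},
\]
so, applying $(\pi\jmath)_*$ term by term and using $s(F)=1/c(F)$,
\[
 2\,\pi_*(h^k ef)=\sum_{i=0}^n c_i(E^*\otimes L)\,(\pi\jmath)_*\bigl(\zeta^{\,n-i+k}\bigr)=\sum_{i=0}^n c_i(E^*\otimes L)\,s_{k+1-i}(F)=c_{k+1}(E^*\otimes L-F).
\]
Finally I would rewrite $E^*\otimes L$ via the flag $E\subset E^\perp\subset V$, whose successive quotients are $E$, $E^\perp/E$, and $V/E^\perp\isom E^*\otimes L$; this gives $c(E^*\otimes L)=c(V)/\bigl(c(E)\,c(E^\perp/E)\bigr)$. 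For odd-rank orthogonal bundles $E^\perp/E$ is independent of the maximal isotropic $E$: from $V\isom V^*\otimes L$ one gets $(\det V)^{\otimes2}\isom L^{\otimes(2n+1)}$ and $\det V\isom(E^\perp/E)\otimes L^{\otimes n}$, hence $E^\perp/E=\det V\otimes L^{\otimes(-n)}=F^\perp/F=M$. Therefore $c(E^*\otimes L-F)=c(V-E-F-M)$ and $2\pi_*(h^k ef)=c_{k+1}(V-E-F-M)$.

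The even-rank case follows the same scheme, and is where the real work lies. Now $E^\perp=E$, so there is no ambient line bundle $M$ and one works instead with a chosen square root $L=M^{\otimes2}$, and the Theorem only yields $(2h+z)\,e=c_n(S^*\otimes E^*\otimes L)$ (relation~\eqref{e.rel-even1}, and similarly for $f$), so $e$ is not directly a Chern class on $\P(F)$. The point is to exploit the relative position of the two fixed maximal isotropic subbundles: whether $\P(E)$ and $\P(F)$ lie in the same ruling family of $\QQ(V)$ is governed by the parity of $\dim(E\cap F)$, and this is the origin of the sign $(-1)^{\dim(E\cap F)}$. According to that parity one writes $e+f$ (or $e-f$) as a polynomial in $h$ determined by the presentation, so that $ef$ becomes linear in $f$; its pushforward is then a combination of the known classes $\pi_*(h^m f)=(\pi\jmath)_*(\zeta^{\,m})=s_{m-n+1}(F)$ together with a contribution of $\pi_*(h^m f^2)$ computed from the self-intersection relation \eqref{e.rel-even2}, $f^2=c_{n-1}\bigl((V/F-S^*\otimes L^*)\otimes S^*\bigr)f$, which supplies the correction $(-1)^{\dim(E\cap F)}c_1(M)^k$. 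Collecting these while tracking the $M$-twist gives $2\pi_*(h^k ef)=c_k(V-E-F+M)-(-1)^{\dim(E\cap F)}c_1(M)^k$. The main obstacle throughout is this even-rank bookkeeping — the sign and the square root $M$ — rather than the push-pull mechanism, which is the same in both cases.
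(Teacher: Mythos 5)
Your odd-rank argument is correct and is essentially the mirror image of the paper's: the paper multiplies the relation $2f = \sum_i c_i(V/F^\perp)h^{n-i}$ by $h^k e$ and pushes forward through $\P(E)$ (writing $e=[\P(E)]$ and using $\rho_*(h^m)=s_{m-n+1}(E)$ to arrive at $c_{k+1}(V-E-F^\perp)=c_{k+1}(V-E-F-M)$), whereas you swap the roles of $E$ and $F$, using the symmetric relation $2e=c_n(S^*\otimes E^*\otimes L)$ and pushing through $\P(F)$. The extra step you then need --- that $E^\perp/E=F^\perp/F=M$ in odd rank, via $E^\perp/E \isom \det V\otimes L^{-n}$ --- is correct, but is a piece of bookkeeping the paper's choice of pushforward avoids entirely, since $F^\perp$ already appears in the relation it uses. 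So in odd rank the two routes are equivalent; the paper's is marginally shorter.

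For even rank your sketch departs from the paper and does not close. The paper makes no attempt to express $e$ in the $\{h^i, fh^j\}$ basis; instead it multiplies relation~\eqref{e.rel-even1} by $h^{k-1}e$, pushes through $\P(E)$ exactly as in the odd case to get $2\pi_*(h^k ef) + z\,\pi_*(h^{k-1}ef) = c_k(V-E-F)$, and then inducts on $k$, with the base case $k=0$ handled over a point (which is where the parity of $\dim(E\cap F)$ and hence the sign enter, via the intersection of the two rulings). Your plan --- write $e\pm f$ as a polynomial in $h$, so that $ef$ becomes linear in $f$ and can be pushed through $\P(F)$ with a correction from the $f^2$ relation --- is a plausible but genuinely different route, and it has a gap: the Theorem's presentation gives generators and relations for $A^*\QQ(V)$ in $h$ and $f$, but it does not directly hand you the coefficients $a_i\in A^iX$ in an expansion $e = h^{n-1}+a_1h^{n-2}+\cdots+a_{n-1}-f$ (the relation $(2h+z)e=\sum c_i(V/E)h^{n-i}$ is not enough, since $2h+z$ is not invertible). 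Determining those coefficients is real work that you do not carry out, and without it the contribution you attribute to $f^2$ cannot be checked. Your claim that one writes ``$e+f$ or $e-f$ according to parity'' is also loose: over a point one has $e+f=h^{n-1}$ when $E$ and $F$ are in opposite families and $e=f$ when they are in the same family, and ``same family'' corresponds to $\dim(E\cap F)\equiv n\pmod 2$, not to the parity of $\dim(E\cap F)$ alone. The paper's induction on $k$ is the cleaner mechanism precisely because it sidesteps all of this.
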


\begin{proof}
First suppose $V$ has rank $2n+1$.  Let $\rho\colon \P(E) \to X$ be the projection.  Using the relation \eqref{e.rel-odd1}, we get
\begin{align*}
  2\pi_*(h^k e f) &= \pi_*( e\cdot (h^{n+k} + c_1(V/F^\perp) h^{n+k-1} + \cdots + c_n(V/F^\perp) h^k ) ) \\
    &= \rho_*\left( h^{n+k} + c_1(V/F^\perp) h^{n+k-1} + \cdots + c_n(V/F^\perp) h^k \right) \\
    &= c_{k+1}(V-E-F^\perp),
\end{align*}
as claimed.

In the case $V$ has rank $2n$, we have
\[
  \rho_*(h^{k+n-1} + c_1(V/F) h^{k+n-2} + \cdots + c_n(V/F)h^{k-1}) = c_k(V-E-F)
\]
for $k>0$.  Using the relation \eqref{e.rel-even1}, this yields the desired formula, $2\pi_*(h^k ef) = c_k(V-E-F)$.    
The case $k=0$ is proved by taking $X$ to be a point and applying, e.g., \cite[Lemma 2]{EG}.
\end{proof}


\begin{proof}[Proof of Proposition]
We have $s(X) = \P(F')$, so that $s^*[\P(E_p)] = \pi_*( [\P(E_p)]\cdot [\P(F')] )$.

First consider the case when the rank of $V$ is odd.  Then $[\P(E_p)] = c_{p-1}(E/E_p \otimes S^*)\cdot e$ and $[\P(F')] = c_{n-1}(F/F'\otimes S^*)\cdot f$, so we must compute
\[
 2\,\pi_*( c_{p-1}(E/E_p \otimes S^*) \cdot c_{n-1}(F/F' \otimes S^*) \cdot ef ).
\]
Using the Lemma, this is equal to
\begin{align*}
 & 2\sum_{i=0}^{p-1} \sum_{j=0}^{n-1}  \pi_*\left( c_{p-1-i}(E/E_p) \cdot c_{n-1-j}(F/F') \cdot h^{i+j} e f \right) 
 \\ & \quad = 2\sum_{k=0}^{p+n-2}\left(\sum_{i+j=k} c_{p-1-i}(E/E_p)\cdot c_{n-1-j}(F/F') \right) \cdot \pi_*( h^k e f )  \\
    & \quad = 2\sum_{k=0}^{p+n-2} c_{p+n-2-k}(E/E_p + F/F') \cdot \pi_*( h^k e f )  \\
   & \quad = \sum_{k=0}^{p+n-2} c_{p+n-2-k}(E/E_p + F/F') \cdot  c_{k+1}(V-E-F-M)  \\
   &\quad = c_{p+n-1}(V-E_p-F'-M),
\end{align*}
where the last line uses $c_{p+n-1}(E/E_p + F/F') = 0$.

Next consider the case where $V$ has rank $2n$.  Now $[\P(E_p)] = c_p(E/E_p\otimes S^*)\cdot e$, and $[\P(F')] = c_{n-1}(F/F'\otimes S^*)\cdot f$.  An analogous computation gives
\begin{align*}
 & 2\,\pi_*( c_p(E/E_p\otimes S^*)\cdot c_{n-1}(F/F'\otimes S^*) \cdot e f ) \\
 & \quad = 2\sum_{k=0}^{p+n-1} c_{p+n-1-k}(E/E_p + F/F') \cdot \pi_*(h^k ef) \\
 & \quad = (1-(-1)^{\dim(E\cap F)})\,c_{p+n-1}(E/E_p+F/F') \\
  &\qquad + \sum_{k=1}^{p+n-1} c_{p+n-1-k}(E/E_p + F/F')  c_k(V-E-F) \\
 & \quad = c_{p+n-1}(V - E_p - F') - (-1)^{\dim(E\cap F)} c_{p+n-1}(E/E_p + F/F'). \tag*{\qedhere}
\end{align*}
\end{proof}


\end{document}